\def\twosquare{{\square\mkern-3.4mu\square}}
\def\squarearrow{{\square\mkern-3.6mu\lefthalfcap}}
\newcommand*{\myref}[1]{(\ref{#1})}
\newtheorem{theorem}{Theorem}[section]
\newtheorem{lemma}[theorem]{Lemma}
\newtheorem{proposition}[theorem]{Proposition}
\theoremstyle{definition}
\newtheorem{definition}[theorem]{Definition}
\newtheorem{example}[theorem]{Example}
\newtheorem{remark}[theorem]{Remark}
\newcommand{\ie}{{\em i.e.}}
\newcommand{\cf}{{\em cf.}\ }
\newcommand{\eg}{{\em e.g.}}
\newcommand{\ko}{\: , \;}
\newcommand{\vir}{\:, \;}
\newcommand{\resp}{{\em resp.}}
\newcommand*{\up}[1]{$^{\text{#1}}$}
\newcommand*{\ul}[1]{\underline{#1}}
\newcommand*{\ol}[1]{\overline{#1}}
\renewcommand*{\tilde}[1]{\widetilde{#1}}
\newcommand{\A}{{\mathbb A}}
\newcommand{\D}{{\mathbb D}}
\newcommand{\E}{{\mathbb E}}
\newcommand{\F}{{\mathbb F}}
\renewcommand{\S}{\mathbb{S}}
\newcommand{\T}{\mathbb{T}}
\newcommand{\U}{\mathbb{U}}
\newcommand{\V}{\mathbb{V}}
\newcommand{\Z}{\mathbb{Z}}
\newcommand{\N}{\mathbb{N}}
\newcommand{\eps}{\varepsilon}
\newcommand{\sfa}{{\mathsf A}}
\newcommand{\sfd}{{\mathsf D}}
\newcommand{\sfe}{{\mathsf E}}
\newcommand{\sff}{{\mathsf F}}
\newcommand{\ca}{{\mathcal A}}
\newcommand{\cb}{{\mathcal B}}
\newcommand{\cc}{{\mathcal C}}
\newcommand{\cd}{{\mathcal D}}
\newcommand{\ce}{{\mathcal E}}
\newcommand{\ch}{{\mathcal H}}
\newcommand{\cm}{{\mathcal M}}
\newcommand{\cn}{{\mathcal N}}
\newcommand{\cs}{{\mathcal S}}
\newcommand{\ct}{{\mathcal T}}
\newcommand{\cw}{{\mathcal W}}
\newcommand*{\opname}[1]{\operatorname{#1}}
\newcommand{\la}{\leftarrow}
\newcommand{\lra}{\longrightarrow}
\newcommand{\eqiso}{\stackrel{_\sim}{=}}
\newcommand{\iso}{\stackrel{_\sim}{\rightarrow}}
\newcommand{\Iso}{\stackrel{_\sim}{\longrightarrow}}
\newcommand{\hodirlim}{\underrightarrow{\mathsf{holim}}}
\newcommand{\hoinvlim}{\underleftarrow{\mathsf{holim}}}
\newcommand{\id}{\mathbf{1}}
\newcommand{\il}{{i_\lefthalfcap}}
\newcommand{\ir}{{i_\righthalfcup}}
\newcommand{\ten}{\otimes}
\newcommand{\HOM}{{\mathcal H}om}
\newcommand{\dia}{{\mathsf{dia}}}
\newcommand{\f}{{\mathsf{f}}}
\newcommand{\Hom}{\mathsf{Hom}}
\newcommand{\Mor}{\mathsf{Mor}}
\newcommand{\mor}{\mathsf{mor}}
\newcommand{\ev}{\mathsf{ev}}
\newcommand{\R}{{\mathbf R}}
\renewcommand{\L}{{\mathbf L}}
\newcommand{\hocolim}{\mathsf{hocolim}}
\newcommand{\holim}{\mathsf{holim}}
\newcommand{\kernel}{\mathsf{ker}}
\newcommand{\cok}{\mathsf{cok}}
\newcommand{\Cat}{\mathscr{C}at}
\newcommand{\CAT}{\mathscr{CAT}}
\newcommand{\ADD}{\mathscr{ADD}}
\newcommand{\EXA}{\mathscr{EXA}}
\newcommand{\TRIA}{\mathscr{TRIA}}
\newcommand{\Dia}{\mathscr{D}ia}
\newcommand{\Cubes}{\mathscr{C}ubes}
\newcommand{\CA}{{\cc^{b}\ca}}
\newcommand{\HA}{{\ch^{b}\ca}}
\newcommand{\DA}{{\cd^{b}\ca}}
\title[Universal property of triangulated derivators via Keller's towers]
{Universal property of triangulated derivators via Keller's towers}
\author{Marco Porta}
\address{Marco Porta}
\email{marcoporta1@libero.it}
\begin{document}

\begin{abstract}
In \cite{Keller91} B. Keller solved the universal problem of the
extension of an exact category to its (bounded) derived category by
introducing the notions of tower of exact and triangulated categories
and proving the universal property in this setting. In this note we
show that his result extends to the corresponding universal problem
for Grothendieck's derivators.
\end{abstract}


\subjclass{18E30, 16E45, 16D90} \date{April 19, 2017}
\keywords{Derivator, extension, triangulated category, exact category,
derived category, homotopical algebra, model category, derived functor,
universal property, towers.}


\maketitle

\tableofcontents

\section{Introduction}
\label{s:intro}
Triangulated categories were invented in the early sixties by
Grothendieck--Verdier \cite{Verdier96} and, simultaneously and
independently, by Dold--Puppe \cite{DoldPuppe61}.
Grothendieck--Verdier sought to axiomatize the properties of derived
categories of sheaves, while Dold--Puppe were motivated by examples
from topology, and notably the stable homotopy category of finite
$CW$-complexes.

In spite of the success of derived and triangulated categories in
recent years, during the thirty years that followed, triangulated
categories were largely considered to be too poor in structure to
allow the development of a significant general theory, analogous
to that of abelian categories established by Freyd, Mitchell,
Gabriel, \ldots .
As an example, let us consider a triangulated category $\ct$ and let
$I$ be a finite directed diagram (\ie, a small finite category with
no nontrivial loops). We can form the category $\Hom(I^\circ,\ct)$
of contravariant functors from the diagram $I$ into $\ct$. In general,
there is no canonical triangulated structure on this category
\cite{Keller91}, even in the simple case where $I$ is the category
containing only two objects, their identities, and an arrow connecting
them, the main problem being that the mapping cone is a non-functorial
construction.
Another important consequence of this fact is the following:
Let $\ce$ be an exact category and $\cd^b(\ce)$ its bounded
derived category. Then, the universal property does not hold
for the triangulated category $\cd^b(\ce)$, \ie, the natural
functor $\HOM_{ex}(\cd^b(\ce),\ct) \to \HOM_{ex}(\ce,\ct)$,
relating categories of exact functors, is not quasi-invertible
for all triangulated categories $\ct$.

It follows that, in order to have a satisfactory theory, the notion
of triangulated category must be modified or enhanced.
A long list of attempts appeared in the literature in recent
years in order to define new kinds of structures assuring the
universal property to the construction $\cd^b$, \eg,
$\opname{DG}$-categories and $\ca_\infty$-categories.
Derivators were introduced in a systematic fashion by Alexander
Grothendieck \cite{Grothendieck90} in the nineties in a long letter
addressed to Quillen and by Alex Heller in \cite{Heller88}. However,
the idea of a `derivatory notion' was already around for a few years,
and possibly others also contributed. For instance, there is the paper
\cite{Anderson79} by D. W. Anderson from the late seventies and other
papers of his in which the philosophy was already very `derivatory'.

Their advantage, compared to the other constructions, is that they
have the least amount of added structure.
I will not try to give a complete definition of what a derivator is
in this introduction. It suffices to say that it is a $2$-functor
$\D$ from the $2$-category of diagrams into the $2$-category of
categories $\cc\ca\ct$ satisfying a list of axioms inspired by
the properties of the formalism of (homotopy) Kan extensions in
the context of Quillen's definition of a model category
\cite{Quillen67} together with its homotopy category.

Similar constructions were independently introduced by Jens Franke
and Bernhard Keller in the nineties.
Keller's construction \cite{Keller91}, called `epivalent tower',
only considers categories `indexed' over hyper-cubical diagrams,
but nevertheless he was able to prove the universal property for
the epivalent towers enhancing the derived category.
Some years later, G. Maltsiniotis \cite{Maltsiniotis07}, principally
inspired by works of Grothendieck \cite{Grothendieck90} and Franke
\cite{Franke96}, gave the axioms for derivators that we follow in
this paper. He introduced a notion of triangulated derivator, \ie,
a derivator taking values in triangulated categories and satisfying
two more axioms \cite{Maltsiniotis07}. At the same time \cite{Keller07},
Keller associates a derivator $\D_\ce$ with an exact category $\ce$
by means of the following definition:
$\D_\ce(I) := \cd^b(\ul\Hom(I^\circ,\ce))$, and proves that this
derivator is triangulated.

The principal aim of this paper is to show that, similarly to the
case of `towers', the triangulated derivator associated to an exact
category $\ce$ has a universal property. This is Theorem
\ref{thm:main} in the text. As a corollary, in the particular case
of the diagram $I=e$ (the terminal category), we obtain a result
(Theorem \ref{thm:ext}) that we can consider as the correct universal
property for the (bounded) derived category $\D_\ce(e) = \cd^b(\ce)$
among the `basic' triangulated categories, \ie, categories of the
form $\T(e)$ for some triangulated derivator $\T$.
This result is similar in spirit to a string of other results about
universal properties of derivator-like structures obtained by other
authors. A. Heller proved his main results on `homotopy theories',
a notion very near to Grothendieck's notion of `d{\'e}rivateur', in
\cite{Heller88} already in the '80s.
J. Franke introduced in 1996 his notion of `systems of triangulated
diagram categories' in \cite{Franke96}. Both these authors proved
universal properties for the structures they have introduced, the
main difference being that in the former the base is given
by the homotopy category of simplicial sets of arbitrary size rather
than of finite spectra as in the latter. We summarize and report these
historical results in Theorem \ref{thm:spectra} in the text.
Finally, in 2008 D.-C. Cisinski \cite{Cisinski08} proved a universal
property for the derivator ${\mathbb{H}\mathrm{ot}}_I$ associated to
the homotopy theory of presheaves over a diagram $I$ with values in
small categories which models classical homotopy theory.

Let us briefly describe the contents of the sections of the present
article. In section \ref{s:der} we fix the notations and recall the
definitions of derivator, pointed derivator, triangulated derivator,
morphism of derivators. Then, after recalling the notion of exact
category, we introduce the associated triangulated derivator defined
by Keller in \cite{Keller07}, which is the central object of interest
in this paper.
Section \ref{s:tow} is dedicated to the description of the notion of
Keller's tower and its related morphisms. We prove our main result
about the universal property of this derivator in section \ref{s:main}.
Let us remark that one could write down a proof completely expressed
in the language of derivators, following the lines of Keller's proof.
However, in this paper we prefer to show how the theorem in the
setting of derivators naturally follows from the case of towers.

Section \ref{s:epirec} is devoted to a very useful property
of morphisms of derivators that we call, following Keller's
\cite{Keller91}, `redundancy of the connecting morphism'.
This means that a morphism of derivators which preserves
bicartesian squares (in the sense of derivators) automatically
preserves connecting morphisms, hence preserves distinguished
triangles. This fact entails that an additive morphism of
exact or triangulated derivators $F$ is automatically
triangulated provided that it locally preserves distinguished
triangles, even when it does not preserves connecting morphisms
functorially. The proof involves the formalism said `recollement
of triangulated categories' or `$6$-gluing functors' for low
dimensional diagrams.

The proof of the universal property for (bounded) derived categories
is left to section \ref{s:cor}. The techniques used in the proof
are new with respect to Keller's approach in \cite{Keller91}.
Indeed, Keller could prove his result using induction over
the integer dimensions of his diagrams since he was dealing
with hypercubes. This is not possible if we consider all
finite directed diagrams (\cf definition in Section \ref{s:der})
and we have to change our strategy.
A key ingredient in our proof is a careful analysis of the property
of epivalence (\ie, essential surjectivity and fullness) of the
diagram functor $\dia_I$, for any $I$, which sends an object $X$ in
the category $\T(I)$ to its `diagram' in the base $\T(e)$. This is
done in subsections \ref{ss:ff} and \ref{ss:epidia} under a
special hypothesis which we shall commonly refer to as `Toda
condition' (\cf Remark \ref{rmk:Toda} for an explanation of
the name).

\subsection{Acknowledgements} I am very happy to thank Bernhard
Keller, for helpful discussions and inspiration about this project.
I thanks the anonymous referees for their useful suggestions to
improve the readability of the paper.


\section{Grothendieck's derivators} \label{s:der}
In this section we briefly recall the definition of derivator in the
sense of Grothendieck, following the exposition in \cite{Maltsiniotis07}.
The reader is invited to look at the original manuscript
\cite{Grothendieck90} for a complete exposition. We begin by reminding
some useful categorical notions and fixing the notations.

We denote by $\Cat$ the $2$-category of small categories. Among its
objects there are $\emptyset$, the empty category, and $e$, the
terminal category with one object $\ast$ and only the identity
morphism. If $\ca$ is a small category, we write $\ca^\circ$ to
indicate the opposite category. Given two small categories $\ca$
and $\cb$, then $\ul\Hom(\ca,\cb)$ denotes the category of functors
($1$-morphisms in the $2$-category $\Cat$) from $\ca$ to $\cb$
with natural transformations as morphisms ($2$-morphisms in $\Cat$).
For any functor $u : \ca \to \cb$ and a fixed object $b$ in the
category $\cb$, the objects of the category $\ca / b$ are the
pairs $(a,f)$, where $a$ is an object of $\ca$ and $f$ a
morphism from $u(a)$ to $b$ in the category $\cb$. A morphism
$\varphi : (a,f) \to (a',f')$ in the category $\ca / b$ is
determined by an arrow $g : a \to a'$ such that $f' \circ u(g) = f$.
Composition of morphisms in $\ca / b$ is clearly induced by
composition in $\ca$. Dually, we have the category $b \backslash \ca$
defined by $b \backslash \ca = (\ca^\circ / b)^\circ$. There are
canonical forgetful functors from $\ca / b$ and $b \backslash \ca$
to $\ca$, defined by $(a,f) \mapsto a$. By considering the identity
functor of $\cb$, we can canonically associate the categories $\cb / b$
and $b \backslash \cb$ with $\cb$, for any arbitrarily fixed object
$b \in \cb$. Let us call $u / b : \ca / b \to \cb / b$ and
$b \backslash u : b \backslash \ca \to b \backslash \cb$ the
induced functors which associate the pair $(a,f)$ with the pair
$(u(a),f)$. It is easy to check that the following commutative
squares are cartesian
\[
\xymatrix{
\ca / b \ar[r] \ar[d]_{u / b} & \ca \ar[d]^u &&
b \backslash \ca \ar[r] \ar[d]_{b \backslash u} & \ca \ar[d]^u \\
\cb / b \ar[r] & \cb & \mbox{,} & b \backslash \cb \ar[r] & \cb & \mbox{.}
}
\]

In the spirit of the theory of derivators we usually call {\em diagrams}
the objects of $\Cat$. Let us name $\Dia_\f$ the full $2$-subcategory of
$\Cat$ whose objects are {\em finite directed diagrams}, \ie, categories
whose nerves have only a finite number of non-degenerate simplices.
Equivalently, we can say that the objects in $\Dia_\f$ are the finite
categories whose underlying quiver (vertices: objects, arrows: non
identical morphisms) does not have oriented cycles (\eg, finite posets).
These objects, together with the functors of categories as $1$-morphisms
and the natural transformations as $2$-morphisms, endow $\Dia_\f$ with
the structure of a $2$-category.

Clearly, more general full sub-$2$-categories $\Dia$ of the $2$-category
$\Cat$ can be used as diagrams in the definition of a derivator (see,
\eg, \cite{Maltsiniotis07} for a discussion and references therein).
We simply write $\Dia$ whenever it is possible to work in those wider
situations.

\begin{definition} \label{def:prederivator}
A {\em prederivator of type $\Dia$} is a $2$-functor
\[
\D : \Dia^\circ \xymatrix{\ar[r] &} \CAT \ko
\]
\end{definition}
to the $2$-category of categories.

More explicitly, this means that there are a category $\D(I)$
associated with any diagram $I$, a functor
$u^* := \D(u)$, $u^* : \D(J) \to \D(I)$, associated with any functor
$u : I \to J$, and a natural transformation $\alpha^* := \D(\alpha)$,
\[
\xymatrix{\D(J) \rtwocell^{u^*}_{v^*}{^{\alpha^*}} & \D(I)} \ko
\]
associated with any natural transformation
\[
\xymatrix{I \rtwocell^u_v{_\alpha} & J} .
\]
These data have to verify the following coherence axioms: \\
\begin{itemize}
\item $\id_I^* = \id_{\D(I)}$, for any object $I$ in $\Dia$;
\item $\id_u^* = \id_{u^*}$, for any arrow
      $I \stackrel{^u}{\rightarrow} J$ in $\Dia$;
\item $(vu)^* = u^*v^*$, for any diagram $I \stackrel{^u}{\rightarrow} J
      \stackrel{^v}{\rightarrow} K$ in $\Dia$;
\item $(\beta\alpha)^* = \alpha^*\beta^*$, for any diagram
      $\xymatrix{I \ruppertwocell^u{\alpha} \rlowertwocell_w{\beta}
      \ar[r]_(.35)v & J}$ in $\Dia$;
\item $(\beta\star\alpha)^* = \alpha^*\star\beta^*$, for any diagram
      $\xymatrix{I \rtwocell^{u}_{u'}{_\alpha} & J
      \rtwocell^{v}_{v'}{_\beta} & K}$ in $\Dia$.
\end{itemize}

If $u : I \to J$ is a morphism in $\Dia$, we respectively denote
$u_*$ and $u_!$ the right and left adjoint functor to $u^*$, when
they exist.

\subsection{Triangulated derivators} \label{triader}
We have to introduce some more notations and terminology in order to
define the notion of derivator.

For any object $x$ of a small category $I$ lying in $\Dia$, we write
$i_{x,I} : e \to I$ to indicate the functor that is uniquely determined
by $x$ and $I$. Sometimes, we will simply write $i_x$, or even $x$,
when the context is clear.
Given an arbitrary prederivator $\D$ and an object $F$ of $\D(I)$, the
object $F_x := i_x^*(F)$ is called the {\em fiber of $F$ at the point $x$}.
If $\varphi : F \to F'$ is a morphism in $\D(I)$, the morphism
$\varphi_x := i_x^*(\varphi) : F_x \to F'_x$ is the
{\em morphism of fibers induced by $\varphi$}.

For the rest of this discussion we shall always assume that the
restriction functors admit the necessary adjoints. For any $2$-square
in $\Dia$,
\begin{equation*}
\begin{matrix}
\\\\\\
\cd \quad =
\end{matrix}
\quad \xymatrix{I' \ar[r]^v \ar[d]_{u'} & I \ar[d]^u
\ar@{}[dl]|*{\Swarrow}^\alpha \\
J' \ar[r]_w & J} \qquad
\begin{matrix}
\\\\\\
\qquad
\xymatrix{\alpha : uv \ar[r] & wu'} \ko
\end{matrix}
\end{equation*}
we denote by $\varepsilon : u^*u_* \to \id_{\D(I)}$,
$\eta : \id_{\D(J)} \to u_*u^*$, $\varepsilon' : u'^*u'_* \to \id_{\D(I')}$,
$\eta' : \id_{\D(J')} \to u'_*u'^*$ the adjunction morphisms. Let us define
the {\em base change morphism} $c_\cd : w^*u_* \to u'_*v^*$ to be the
composition $(u'_*v^* \star \varepsilon)(u'_* \star \alpha^* \star u_*)
(\eta' \star w^*u_*)$ of the following morphisms
\begin{tiny}
\[
\xymatrix{
w^*u_* \ar[rr]^(0.3){\eta' \star w^*u_*} &&
u'_*u'^*w^*u_* = u'_*(wu')^*u_*
\ar[rr]^(0.5){u'_* \star \alpha^* \star u_*} &&
u'_*(uv)^*u_* = u'_*v^*u^*u_*
\ar[rr]^(0.65){u'_*v^* \star \varepsilon}  && u'_*v^* .
}
\]
\end{tiny}
Clearly, we have a dual morphism $c'_\cd : v_!u'^* \to u^*w_!$.

Given a small category $I$ in $\Dia$, we indicate as $p_I : I \to e$
the unique functor from $I$ to the terminal category $e$. In order
to remain compatible with the standard notations of model category
theory, we can set, for any object $F$ in $\D(I)$,
\[
\hoinvlim_I F := (p_I)_*(F) \qquad \mbox{and} \qquad
\hodirlim_I F := (p_I)_!(F)
\]
in the category $\D(e)$ and talk about {\em homotopical
projective and inductive limit} of $F$. Clearly, our notation is
inspired by the theory of model categories and the corresponding
homotopy derivators. Let us emphasize that in those examples
the functors above correspond to the classical homotopy (co-)limits
but respectively over the corresponding opposite categories.

We remark that sometimes the notations $\Gamma_*(I,F) := (p_I)_*(F)$ and
$\Gamma_!(I,F) := (p_I)_!(F)$ are also used. In this way, these objects
can be thought of as the {\em global sections of $F$ over $I$}. The
topologically inclined reader might speak of the {\em (co-)homology of
$I$ with coefficients in $F$}. Other common notations that we also
use in this paper are $\holim_I F$ and $\hocolim_I F$, respectively
for $\hoinvlim_I F$ and $\hodirlim_I F$.

Note that the notions of homotopy limit and fiber of an object $F$ are
not completely unrelated. Indeed, we can construct a comparison
morphism in the following way. Let $u : I \to J$ be a morphism in $\Dia$,
$y$ an object of $J$ and $F$ an object in $\D(I)$. There are the
forgetful functors $j : I / y \to I$ and $j : y \backslash I \to J$.
We denote by $F|_{I/y}$ (\resp, $F|_{y \backslash I}$) the image $j^*(F)$
in $\D(I/y)$ (\resp, in $\D(y \backslash I)$). Let us consider the $2$-squares
\begin{equation*}
\begin{matrix}
\\\\\\
\mbox{$\cd_{u / y}$ = }
\end{matrix}
\xymatrix{
I/y \ar[r]^j \ar[d]_{p_{I/y}} & I \ar[d]^u
\ar@{}[dl]|*{\Swarrow}^\alpha \\
e \ar[r]_y & J
}
\qquad
\begin{matrix}
\\\\\\
\mbox{and}
\end{matrix}
\qquad
\begin{matrix}
\\\\\\
\mbox{$\cd_{y \backslash u}$ = }
\end{matrix}
\xymatrix{
y \backslash I \ar[r]^j \ar[d]_{p_{y \backslash I}} & I \ar[d]^u
\ar@{}[dl]|*{\Nearrow}_{\alpha'} \\
e \ar[r]_y & J
}
\begin{matrix}
\\\\\\
\mbox{\ko}
\end{matrix}
\end{equation*}
where the $2$-morphisms $\alpha$ and $\alpha'$ are defined, for any
object $x$ of $I$ and any morphism $f : u(x) \to y$ or, respectively,
$g : y \to u(x)$, by the formulae $\alpha_{(x, \; f : u(x) \to y)} := f$
and $\alpha'_{(x, \; g : y \to u(x))} := g$. For any $F$ in $\D(I)$,
there are the associated canonical base change morphisms
\[
\xymatrix{
\quad c_{\cd_{u / y}} : \; (u_*F)_y \ar[rr] && (p_{I/y})_*j^*(F) =
\hoinvlim_{I/y}(F|_{I/y}) & \mbox{and}
}
\]
\[
\xymatrix{
c'_{\cd_{y \backslash u}} : \; \hodirlim_{y \backslash I}
(F|_{y \backslash I}) = (p_{y \backslash I})_!j^*(F)
\ar[rr] && (u_!F)_y & \mbox{.}
}
\]

Suppose that we have fixed a prederivator $\D$. Let us consider two
arbitrary diagrams $I, J$ in $\Dia$. For any object $x$ of $I$, there
is the canonical functor $x_{I,J} : J \to I \times J$, which
associates any object $y$ in the category $J$ with the pair $(x,y)$.
By $2$-functoriality of $\D$, there exist the functor
${x_{I,J}}^* : \D(I \times J) \to \D(J)$ and the morphism of functors
${\alpha_{I,J}}^* : {x'_{I,J}}^* \to {x_{I,J}}^*$, for every morphism
$\alpha : x \to x'$ in $I$. In other words, we have a functor
\[
\D(I \times J) \times I^\circ \lra \D(J) \ko
\]
which associates the object ${x_{I,J}}^*(F)$ of $\D(J)$ with a pair
$(F,x)$. By adjunction, we get the functor
\[
\dia_{I,J} : \, \D(I \times J) \lra \ul\Hom(I^\circ, \D(J)) \ko
\]
where $\ul\Hom(I^\circ, \D(J))$ denotes the category of contravariant
functors from $I$ to $\D(J)$. When $J = e$, the functor $\dia_{I,e}$
induces a functor
\[
\dia_I : \, \D(I) \lra \ul\Hom(I^\circ, \D(e)) \ko
\]
whose image $\dia_I(F)$, for any $F$ in $\D(I)$, is called
{\em underlying diagram of $F$} or simply {\em diagram of $F$}. It is
the presheaf
\[
\dia(F) := \dia_I(F) : I^\circ \lra \D(e)
\]
defined by the equality $\dia_I(F)(x) = F_x$, for any object $x$ in $I$.

\begin{definition}
A {\em Grothendieck's derivator of type $\Dia$} is a prederivator
$\D$ of type $\Dia$ which satisfies the following axioms.
\begin{itemize}
\item[{\bf Der 1}]
   \begin{itemize}
   \item[a)] If $I$ and $J$ are in $\Dia$, the functor
   \[
   \xymatrix{\D(I \coprod J) \ar[rr]^{(i^*, j^*)} && \D(I) \times \D(J) \ko}
   \]
   induced by the canonical functors $i : I \to I \coprod J$ and
   $j : J \to I \coprod J$, is an equivalence of categories;
   \item[b)] the category $\D(\emptyset)$ is equivalent to the
   point category $e$.
   \end{itemize}
   \item[{\bf Der 2}] For any category $I$ in $\Dia$, the family of
   functors $i_x^* : \D(I) \to \D(e)$, indexed by the objects $x$
   lying in $I$, is conservative. Explicitly, this means that any
   morphism $\varphi : F \to F'$ in $\D(I)$ is an isomorphism iff
   the morphism of fibers $\varphi_x = i_x^*(f) : F_x \to F'_x$ is
   an isomorphism in $\D(e)$, for all $x$ in $I$.
   \item[{\bf Der 3}] For any morphism $u : I \to J$ in $\Dia$, the
   induced functor $u^* : \D(J) \to \D(I)$ admits a right adjoint
   $u_* : \D(I) \to \D(J)$ and a left adjoint $u_! : \D(I) \to \D(J)$.
   \item[{\bf Der 4}] For any morphism $u : I \to J$ in $\Dia$, any
   object $y$ in $J$ and any $F$ in $\D(I)$, the associated canonical
   base change morphisms
   \[
   \xymatrix{
   & \qquad c_{\cd_{u / y}} : \; (u_*F)_y \ar[rr] && (p_{I/y})_*j^*(F) =
   \hoinvlim_{I/y}(F|_{I/y}) & \mbox{and}
   }
   \]
   \[
   \xymatrix{
   c'_{\cd_{y \backslash u}} : \; \hodirlim_{y \backslash I}
   (F|_{y \backslash I}) = (p_{y \backslash I})_!j^*(F)
   \ar[rr] && (u_!F)_y
   }
   \]   
   are invertible.
\end{itemize}
\end{definition}

We usually refer to the notion of a Grothendieck's derivator simply as
`derivator'.

\begin{definition} \label{def:epivalence}
A prederivator (\resp, derivator) is an {\em epivalent prederivator}
(\resp, {\em epivalent derivator}) if it enjoys the property stated
in the following axiom.
\begin{itemize}
\item[{\bf Der 5}] (\em{Epivalence axiom}). For any $J$ in $\Dia$,
the functor
   \[
   \dia_{\Delta_1,J} : \D(\Delta_1 \times J)
   \lra \ul\Hom(\Delta_1^\circ,\D(J)) \ko 
   \]
where $\Delta_1$ denotes the category $\{0 \la 1\}$ of $\Dia$,
is full and essentially surjective.
\end{itemize}
\end{definition}

We recall that some authors (\eg, Moritz Groth in \cite{Groth11},
Andrei Radulescu-Banu in \cite{Radulescu-Banu09} for the notion
he calls `Heller derivator' and Kevin Carlson in \cite{Carlson16})
reserve the name `strong derivator' for the notion of derivator
which satisfies (some version of) axiom {\bf Der 5}.

\begin{remark} \label{rmk:epivalence}
We agree that the status of Axiom {\bf Der 5} is perhaps not in
its final form. Indeed, we use the axiom version of Franke
\cite{Franke96} but Heller in \cite{Heller97} uses a variant of
the axiom for finite free categories. Moreover, one can show that
a yet more general version of this axiom is satisfied by typical
examples such as homotopy derivators of model categories (\cf
\cite{Radulescu-Banu09} by Radulescu–Banu), and this more refined
version might be useful in future applications.
Recently, yet a different version of the fifth axiom was proposed
by Carlson, and this version is enjoyed by homotopy derivators of
quasi-categories (certain partial underlying diagram functors are
assumed to be smothering functor in the language of Riehl–-Verity).
This version was used to offer a different perspective on the
relation between derivators and homotopy theories of homotopy
theories. (This version, however, is not satisfied by homotopy
derivators of model categories.)
\end{remark}

Let us remark that axioms {\bf Der 1} and {\bf Der 2} above assure
us that, for any diagram $I$ in $\Dia$, the category $\D(I)$ has
finite products and coproducts and in particular it admits initial
and final objects.

A morphism $j : U \to I$ of $\Dia$ is an {\em open immersion}
if it is injective on objects, fully faithful, and if any morphism
$f : y \to j(x)$ in $I$ is in the image of $j$, \ie, it is of the
form $j(g) : j(x') \to j(x)$, for some morphism $g : x' \to x$ in $U$.
Dually, a morphism $i : Z \to I$ of $\Dia$ is a
{\em closed immersion} if $i^\circ : Z^\circ \to I^\circ$ is an
open immersion. Open immersions and closed immersions are stable
under composition and pullback.

\begin{definition} \label{def:pointed}
A derivator $\D$ is {\em pointed} if the following axiom holds.
\begin{itemize}
\item[{\bf Der 6}] For any closed immersion $i : Z \to I$ in $\Dia$,
the induced functor $i_*$ admits a right adjoint $i^!$. Dually, for
any open immersion $j : U \to I$ in $\Dia$, the induced functor $j_!$
admits a left adjoint $j^?$.
\end{itemize}
\end{definition}

Notice that in \cite{Groth11} Groth proves that axiom {Der 6} is
equivalently stated by requiring that the base $\D(e)$ admits a
zero object.

Coming back to notations and terminology, we write $\boxempty$ to
indicate the category $\Delta_1 \times \Delta_1$, $\lefthalfcap$,
$\righthalfcup$ to indicate the two subcategories
\begin{equation*}
\xymatrix{(0,0) & (0,1) \ar[l] \\ (1,0) \ar[u]} \qquad
\begin{matrix}
\\\\\\
\mbox{,}
\end{matrix}
\qquad \xymatrix{& (0,1) \\ (1,0) & (1,1) \ar[l] \ar[u]}
\end{equation*}
of $\boxempty$, and $i_\lefthalfcap : \lefthalfcap \to \boxempty$,
$i_\righthalfcup : \righthalfcup \to \boxempty$ to indicate the
inclusion functors. An object $F$ of $\D(\boxempty)$ is
{\em homotopically cartesian} (\resp, {\em homotopically cocartesian}),
or, more simply, {\em cartesian} (\resp, {\em cocartesian}), if the
adjunction morphism
\begin{equation*}
\eta^F : F \lra (i_\righthalfcup)_*(i_\righthalfcup)^*F
\quad\quad\quad (\mbox{\resp,} \;\;
\varepsilon^F : (i_\lefthalfcap)_!(i_\lefthalfcap)^*F \lra F)
\end{equation*}
is an isomorphism.

\begin{definition} \label{def:triangulated}
An epivalent derivator is {\em triangulated} if it is pointed and if
the following axiom holds.
\begin{itemize}
\item[{\bf Der 7}] Any object $F$ in $\D(\boxempty)$ is homotopically
cartesian if and only if it is homotopically cocartesian (\ie,
homotopically bicartesian).
\end{itemize}
\end{definition}

We prefer to follow Maltsiniotis--Keller \cite{Maltsiniotis07} and
Cisinski--Neeman \cite{CisinskiNeeman08} and include axiom
{\bf Der 5} in the very definition of a triangulated derivator.
The main argument in favour of this choice is that in this work
we are above all interested in triangulated derivators
and the epivalence axiom is central for the construction of the
triangulated structure of the local categories $\D(I)$, as in
the proof of Theorem \ref{thm:Malts}.

\begin{example}
The following example by Cisinski \cite{Cisinski03} is of
particular interest.
Let $\cm$ be a category such that there exists a stable Quillen
closed model structure on with $\cw$ as weak equivalences. Then,
the prederivator $\D_{(\cm,\cw)}$ defined, on any small category
$I$, by
\[
\D_{(\cm,\cw)}(I) := \ul\Hom(I^\circ, \cm)[\cw^{-1}]
\]
and, on any functor $u : I \to J$, by
\[
\xymatrix{\D_{(\cm,\cw)}(J) \ar[rrr]^{u^* := \ol{(u^\circ)^*}} &&&
\D_{(\cm,\cw)}(I)} \ko
\]
is a triangulated derivator. Here the overline means the induced
functor between the localized categories.
The fact that these derivators are triangulated is verified in all
detail in \cite{GrothPontoShulman14} by Groth–-Ponto-–Shulman.
\end{example}

For example, the stable model category of spectra
\cite{BousfieldFriedlander78,Dwyer04,Lydakys98,
MandellMaySchwedeShipley01,Quillen67} gives rise to a triangulated
derivator $\S p$.
The following theorem was announced in \cite{Maltsiniotis07}.
A precursor version, based on Franke's theory, can be found in
\cite{Franke96}.
The interested reader can find a proof in \cite[Theorem 4.15]{Groth11}.
The fact that the triangulations are `canonical' was made more precise
in the paper \cite{Groth16} by Groth (this includes a $2$-functoriality
statement as opposed to a mere functoriality, a `$2$-naturality statement',
and a uniqueness statement).

\begin{theorem}[Maltsiniotis] \label{thm:Malts}
If $\D$ is a triangulated derivator, for every $I$ in $\Dia$, there
is a canonical structure of triangulated category on $\D(I)$, such
that for every morphism $u : I \to J$ in $\Dia$, the functor
$u^* : \D(J) \to \D(I)$ is canonically endowed with the structure
of triangulated functor.
\end{theorem}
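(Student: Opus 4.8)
The plan is to equip each $\D(I)$ with its triangulated structure by transporting, in a coherent way, the construction of distinguished triangles from the base $\D(e)$. First I would fix the suspension functor: since $\D$ is pointed, for every $I$ one has the cone construction on $\D(\Delta_1\times I)$, and sending an object to the cofiber of its structure map, then shifting, yields an endofunctor $\Sigma_I$ of $\D(I)$; its inverse $\Omega_I$ comes dually from the fiber construction, and axiom {\bf Der 7} (bicartesian = cartesian = cocartesian in $\D(\boxempty\times I)$, which follows from the case $I=e$ by evaluating fiberwise via {\bf Der 2}) is exactly what forces $\Sigma_I$ and $\Omega_I$ to be mutually quasi-inverse equivalences. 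Then I would declare a triangle $X\to Y\to Z\to \Sigma_I X$ in $\D(I)$ distinguished if it is isomorphic to the image, under the appropriate restriction/evaluation functor, of a cocartesian square in $\D(\boxempty\times I)$ whose top-left vertex is zero — i.e.\ to a cofiber sequence. The bulk of the work is checking Verdier's axioms (TR1)--(TR4) for this class.

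The key technical input for (TR1)--(TR3) is the epivalence axiom {\bf Der 5}: to produce, extend, and rotate triangles one must lift diagrams defined only on $\Delta_1^\circ$ (or on $\lefthalfcap$, $\righthalfcup$) to honest objects of $\D$ over the corresponding shape, and {\bf Der 5} (together with its consequences for the shapes $\lefthalfcap$, $\righthalfcup$, obtained by factoring through $\Delta_1$) gives essential surjectivity and fullness of the relevant partial diagram functors. So the steps are: (i) rotation of triangles via the $3\times 3$-type argument using pasting of cocartesian squares (a cocartesian square glued to a cocartesian square is cocartesian — the standard derivator computation using {\bf Der 4} and Kan-extension pasting); (ii) completion of any morphism to a triangle by forming its cofiber; (iii) the morphism axiom, filling in the third map between two triangles, which again uses fullness from {\bf Der 5} to lift a compatible pair of maps over $\lefthalfcap$ to a map of squares and then extend cocartesianly; (iv) the octahedron (TR4), realized as a diagram over $\Delta_2\times\Delta_1$ (the "$3\times 2$ grid" of Groth) all of whose small squares are cocartesian, whose existence follows by iterated cofiber constructions (left Kan extensions along open immersions) and whose defining properties are read off by restriction.

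Next I would verify $2$-functoriality: for $u:I\to J$ the functor $u^*$ commutes with the shift (because it commutes, up to canonical isomorphism, with the cone construction — this is the compatibility $u^*$ with left Kan extension along $\lefthalfcap\to\boxempty$, which is {\bf Der 4}-type base change for the square relating $u\times\id$ to $u$) and sends cocartesian squares to cocartesian squares for the same reason, hence sends distinguished triangles to distinguished triangles; the coherence of these comparison isomorphisms under composition of $u$'s is inherited from the $2$-functoriality of $\D$. The word "canonical" means precisely that all these structure isomorphisms are the ones induced by the adjunction/base-change $2$-cells, with no choices beyond an initial choice of cone functor, which is itself unique up to unique isomorphism.

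The main obstacle I expect is (TR4): assembling the octahedron purely derivator-theoretically requires building a sufficiently large coherent diagram (over a grid in $\Dia$) with prescribed cocartesianness of all constituent squares, and then extracting the octahedral data by restriction along the right subcategories; keeping track of which squares are cocartesian, and invoking {\bf Der 5} at the correct spots to guarantee the needed lifts are available, is the delicate part. (For the bookkeeping in the global statement — that the triangulation is genuinely canonical and the functoriality is strict $2$-functoriality rather than mere functoriality — I would refer, as the excerpt already does, to \cite[Theorem 4.15]{Groth11} and to \cite{Groth16}.)
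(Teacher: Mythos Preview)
The paper does not actually prove this theorem: immediately after the statement it refers the reader to \cite{Maltsiniotis07}, \cite{Franke96}, and especially \cite[Theorem~4.15]{Groth11} and \cite{Groth16} for the proof and for the precise meaning of ``canonical'', and then moves on. So there is no in-paper argument to compare your proposal against.

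That said, your sketch is essentially the standard route taken in those references (notably Groth): define $\Sigma_I$ via cofibers over $\Delta_1\times I$, declare triangles distinguished when they arise from bicartesian squares, use {\bf Der~5} for the lifting needed in (TR1)--(TR3), use pasting of cocartesian squares (via {\bf Der~4}) for rotation and for the octahedron over a grid shape, and read off the triangulated structure on $u^*$ from base change. Your identification of (TR4) as the delicate point and your closing deferral to \cite{Groth11}, \cite{Groth16} are exactly in line with how the paper itself handles the matter. One small correction: you describe a distinguished triangle as coming from a cocartesian square whose \emph{top-left} vertex is zero; in the conventions used here (and in Maltsiniotis/Groth) it is the \emph{bottom-left} vertex $(1,0)$ that is required to vanish, so that $X\to Y$ sits on the top edge and the cofiber $Z$ appears at $(1,1)$.
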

We refer to the articles of Maltsiniotis and Groth for all the
details concerning the canonical triangulated structure of the
values of a triangulated derivator. In what follows we are going
to use notation an terminology of \cite{Maltsiniotis07}.

\subsection{Morphisms of derivators} \label{ss:mor}
\begin{definition} \label{def:dermor}
Let $\D$ and $\E$ be two prederivators of type $\Dia$. A
{\em morphism} $F : \D \to \E$ is given by
\begin{itemize}
\item a functor $F_I : \D(I) \to \E(I)$, for each $I$ in $\Dia$;
\item an invertible transformation of functors
\[ \varphi_u : F_I u^* \iso u^* F_J \ko \]
for each morphism $u : I \to J$ in $\Dia$;
\end{itemize}
such that
\begin{itemize}
\item $\varphi_{\id_I} = \id_{F_I}$, for each $I$ in $\Dia$;
\item $\varphi_{uv} = (v^* \varphi_u)(\varphi_v u^*)$, for each pair
of morphisms $K \stackrel{v}{\to} I \stackrel{u}{\to} J$ in $\Dia$;
\item $\varphi_u \alpha^* = \alpha^* \varphi_v$, for each $2$-morphism
$\xymatrix{I \rtwocell^u_v{_\alpha} & J}$ in $\Dia$.
\end{itemize}
\end{definition}

Let us remark that this is a special case of the notion of a
`pseudo--natural' transformation.

The {\em composition} of two morphisms of prederivators $F : \D \to \E$
and $G : \E \to \F$ is defined by setting $(GF)_I = G_I F_I$, for all $I$
in $\Dia$, and $(\gamma\varphi)_u = (\gamma_u F_J)(G_I \varphi_u)$, for
all morphisms $u : I \to J$ in $\Dia$.

If $F$ and $G$ are two morphisms of prederivators from $\D$ to
$\E$, a {\em $2$-morphism} $\alpha : F \to G$ is given by a
natural transformation $\alpha_I : F_I \to G_I$, for each $I$ in
$\Dia$, such that $(u^*\alpha_J)\varphi_u = \gamma_u(\alpha_I u^*)$,
for all morphisms $u : I \to J$ of $\Dia$ (this is a special case of
the notion of a `modification'). The {\em composition} of two such
morphisms is clear.
Thus, the morphisms between any fixed pair of prederivators $\D$ and
$\E$, and their $2$-morphisms, are the objects and, respectively, the
morphisms of a category that we denote $\ul\HOM(\D,\E)$.

A {\em morphism of derivators} is just a morphism of the underlying
prederivators and similarly for $2$-morphisms. Thus, given a pair of
derivators $\D$ and $\E$, we use the same notation $\ul\HOM(\D,\E)$
to indicate the category with morphisms of derivators from $\D$ to $\E$
as objects and the related $2$-morphisms as morphisms.

Let us remark that, in the presence of another morphism $v : K \to I$,
for any object $X$ lying in $\D(J)$, the isomorphism $\varphi_{uv}^X$
is explicitly given by the composition
\[
\xymatrix{
F_K v^*u^*X \ar[rr]^{\varphi_v^{u^*X}}_\sim && v^*F_I u^*X
\ar[rr]^{v^*(\varphi_u^X)}_\sim && v^*u^*F_JX .
}
\]
The morphism $\varphi_u$ induces, via the adjunction morphisms
$\eta_u : \id \to u^*u_!$ and $\varepsilon_u : u_!u^* \to \id$,
another morphism
\[
\varphi^u : u_!F_I \to F_J u_! \ko
\]
whose action on any object $X$ of $\D(I)$ is given by the composition
\[
\xymatrix{
u_!F_IX \ar[rr]^<>(0.5){u_!F_I(\eta_u^{X})} && u_!F_I u^*u_!X
\ar[rr]^<>(0.5){u_!(\varphi_u^{u_!X})}_\sim && u_!u^*F_J u_!X
\ar[rr]^<>(0.5){\varepsilon_u^{F_J u_!X}} && F_J u_!X .
}
\]

Analogously to the contravariant case, we have the relation
\[
\varphi^{u,v_!X} \circ u_!(\varphi^{v,X}) = \varphi^{uv,X} \ko
\]
valid for all objects $X$ of $\D(K)$.
Indeed, functoriality of the morphisms involved and the relation
$\varphi_{uv}^X = v^*(\varphi_u^X) \circ \varphi_v^{u^*X}$ give us
commutative diagrams that we can use in the following sequence of
equalities
\begin{eqnarray*}
\varphi^{uv,X} 
& = &
\varepsilon_{uv}^{F_J(uv)_!X} \circ
(uv)_![\varphi_{uv}^{(uv)_!X}] \circ (uv)_!F_J(\eta_{uv}^X) \\
& = &
\varepsilon_u^{F_J(uv)_!X} \circ u_![\varepsilon_v^{u^*F_J(uv)_!X}] \circ
(uv)_![v^*(\varphi_u^{(uv)_!X}) \circ \varphi_v^{u^*(uv)_!X}] \circ \\
&&
\circ (uv)_!F_Jv^*(\eta_u^{v_!X}) \circ (uv)_!F_J(\eta_v^X) \\
& = &
\varepsilon_u^{F_J(uv)_!X} \circ u_![\varphi_u^{(uv)_!X}] \circ
u_![\varepsilon_v^{F_Iu^*(uv)_!X}] \circ (uv)_!v^*F_I(\eta_u^{v_!X}) \circ \\
&&
\circ (uv)_![\varphi_v^{v_!X}] \circ (uv)_!F_J(\eta_v^X) \\
& = &
\varepsilon_u^{F_J(uv)_!X} \circ u_![\varphi_u^{(uv)_!X}] \circ
u_!F_I(\eta_u^{v_!X}) \circ u_![\varepsilon_v^{F_I v_!X}] \circ
u_!v_![\varphi_v^{v_!X}] \circ u_!v_!F_J(\eta_v^X) \\
& = &
\varphi^{u,v_!X} \circ u_!(\varphi^{v,X}) .
\end{eqnarray*}

Let us remark that one might be able to find the last formula
(and quite a few others of this kind) by using the `calculus of
mates', \ie, a $2$-categorical machinery which provides a
framework for pasting diagrams arising in the presence of adjoint
pairs in $2$-categories. Nevertheless, since we intend this paper
useful for the applications and we are trying to get a high
readability in the exposition, we prefer to maintain all the higher
categorical details at the very least.

\subsection{Exact categories and their derived categories}
\label{ss:exdercat}
In this subsection we recall the notion of exact category and the
construction of the related derived category. This notion was
introduced by Quillen in \cite{Quillen73} and modified by Keller
in \cite{Keller90}.

\begin{definition} \label{def:excat}
An {\em exact category} is an additive category $\ca$ endowed
with a class $\Phi$ of {\em short exact sequences}, also said
{\em exact pairs},
\[
\xymatrix{X \ar@{ >->}[r]^i & Y \ar@{->>}[r]^p & Z}
\]
(\ie, $\kernel(p) = i$, $\cok(i) = p$), which is closed under isomorphisms.
Elements in this class are called {\em conflations} and the arrows of
type $\xymatrix{\ar@{ >->}[r]&}$ (\resp, $\xymatrix{\ar@{->>}[r]&}$)
are called {\em inflations} (\resp, {\em deflations}), such that the
following axioms hold.
\begin{itemize}
\item[{\bf Ex 0}] The identity morphism $\xymatrix{0 \ar[r] & 0}$
     is a deflation.
\item[{\bf Ex 1}] The class of deflations is stable under composition.
\item[{\bf Ex 2}] For any deflation $\xymatrix{Y \ar@{->>}[r]^d & Z}$
and any morphism $\xymatrix{Z' \ar[r]^f & Z}$ in $\ca$, there exists
a cartesian square (pull-back)
\[
\xymatrix{
   Y' \ar@{-->>}[rr]^{d'} \ar@{-->}[d]_{f'} && Z'\;\; \ar@<-0.7ex>[d]^f  \\
   Y \ar@{->>}[rr]_d && Z
   }
   \begin{matrix} \\\\\\ \mbox{\ko} \end{matrix}
\]
where $d'$ is a deflation.
\item[{\bf Ex2\up{o}}] For any inflation $\xymatrix{X \ar@{ >->}[r]^i & Y}$
and any morphism $\xymatrix{X \ar[r]^f & X'}$ in $\ca$, there exists
a cocartesian square (push-out)
\[
\xymatrix{
   X \ar@{ >->}[rr]^i \ar[d]_f && Y\;\;\: \ar@{-->}@<-1ex>[d]^{f'}  \\
   X' \ar@{ >-->}[rr]_{i'} && Y'
   }
   \begin{matrix} \\\\\\ \mbox{\ko} \end{matrix}
\]
where $i'$ is an inflation.
\end{itemize}
\end{definition}

\begin{example} \label{ex:exact cats}
The following are examples of exact categories.
\begin{itemize}
\item[a)] The opposite category $\ca^\circ$ of any exact category $\ca$
canonically inherits a structure of exact category.
\item[b)] Any additive category $\ca$, endowed with all of its split
short exact sequences.
\item[c)] Let $\ca$ be an exact category. Then, for any small
category $I$ the category defined by
\[
\ul\ca(I) := \ul\Hom(I^\circ,\ca)
\]
becomes exact when endowed with componentwise conflations.
\end{itemize}
\end{example}

Remark that the exact category $\ul\ca(I)$ usually have non split
conflations even if all conflations of $\ca$ split.

Let us call $\CA$ the {\em category of bounded complexes}
\[
\xymatrix{
\ldots \ar[r] & M^p \ar[r]^{d^p} & M^{p+1} \ar[r] & \ldots \ko \quad
d^{p+1}d^p=0 \ko \quad p \in \Z \ko
}
\]
over an exact category $\ca$, where $M^p=0$ for all $|p| >> 0$.
The category $\HA$ is the quotient of $\CA$ by the ideal of
nullhomotopic morphisms. Note that the category $\HA$ is
triangulated in Verdier sense \cite[Ch.~2.2]{Verdier96}, with
suspension functor $\Sigma$ given by
\[
(\Sigma X)^p := X^{p+1} \quad \ko \quad d_{\Sigma X} := -d_X \ko
\]
and triangles obtained from the componentwise split short exact sequences.

A complex $N$ is {\em strictly acyclic} if there exist conflations
\[
\xymatrix{Z^p \ar@{ >->}[r]^{i^p} & N^p \ar@{->>}[r]^{q^p} & Z^{p+1}}
\ko p \in \Z \ko
\]
such that $d^p = i^{p+1}q^p$, for all $p \in \Z$. A complex $N$
is {\em acyclic} if it is isomorphic in $\HA$ to a strictly acyclic
complex. One can show that, if the category $\ca$ is idempotent
complete, then any complex is acyclic iff it is strictly acyclic
(\cf \cite{Keller07} for details on the constructions of this paragraph).
Moreover, (without the idempotent completeness condition) it is true
that the images in $\HA$ of the acyclic complexes form a thick
subcategory $\cn$ of $\HA$, \ie, $\cn$ is a triangulated subcategory
of $\HA$ which is stable under retracts. The {\em (bounded) derived
category} of $\ca$, that we denote $\DA$, is the Verdier quotient
\cite[Ch.~2.2]{Verdier96} $\HA \, / \, \cn$. Let us remark that the
derived category might be a large category.

Note that there is a canonical embedding \cite{Keller07}
\[
can : \ca \lra \DA \ko \quad X \mapsto
(\ldots \to 0 \to X \to 0 \to \ldots) \ko
\quad \deg(X) = 0 \quad .
\]
For any conflation in $\ca$,
\[
\varepsilon : \quad
\xymatrix{X \ar@{ >->}[r]^i & Y \ar@{->>}[r]^p & Z} \ko
\]
we have a canonical distinguished triangle in $\DA$,
\[
can(\varepsilon) : \quad
\xymatrix{can(X) \ar[rr]^{can(i)} && can(Y) \ar[rr]^{can(p)} &&
can(Z) \ar[rr]^{\partial\varepsilon} && \Sigma can(X)} .
\]

In this way, we have constructed a $2$-functor
\[
\EXA \to \TRIA \quad \ko \quad \ca \mapsto \DA \ko
\]
from the $2$-category of exact categories to the $2$-category
of triangulated categories.

It may be that some explanations are in order, here. The objects
of $\EXA$ are pairs $(\ca,\Phi)$ formed by a (possibly large)
additive category $\ca$ with a class of conflations $\Phi$ as in
definition \ref{def:excat}, $1$-morphisms
$F : (\ca,\Phi) \to (\ca',\Phi')$ are pairs formed by an additive
morphism $F : \ca \to \ca'$ such that $F(\Phi) \subseteq \Phi'$
with an obvious morphism of conflation, $2$-morphisms
$\alpha : F \to F'$ are natural transformations of additive functors
which are compatible with the exact structure.
Analogously, objects of the $2$-category $\TRIA$ are pairs
$(\ct,\Psi)$ formed by a (possibly large) additive category
$\ct$ with a class of distinguished triangles $\Psi$,
$1$-morphisms $(F,\delta) : (\ct,\Psi) \to (\ct',\Psi')$ are pairs
formed by an additive morphism $F : \ct \to \ct'$ such that
$F(\Psi) \subseteq \Psi'$ in the sense of \cite{KellerVossieck87}
with an obvious morphism of distinguished triangles, $2$-morphisms
$\alpha : (F,\delta) \to (F',\delta')$ are natural transformations
of functors which are compatible with the triangulated structure
(\cf \cite{KellerVossieck87} for complete definitions). The action
of the $2$-functor $\cd^b$ on the functor $F$ gives a {\em derived}
functor $\R F$ and functoriality is expressed by the commutativity
of the following diagram
\[
\xymatrix{
\ca \ar[rr]^{F} \ar[d]_{can} &&
\ca' \ar[d]^{can'} \\
\cd^b\ca \ar[rr]_{\R F} && \cd^b\ca' & .
}
\]
As usual in the literature, we do not write all the heavy
$2$-categorical notation and we leave to the reader filling in
the details. The canonical functors $can$ and $can'$ are examples
of exact or $\partial$-functors (\cf definitions just after theorem \ref{thm:main}).

The reason we consider derivators or other structures like towers
is that the canonical embedding $can$ does not have the universal
extension property we are looking for.

\subsection{The triangulated derivator associated with an exact category}
\label{ss:triader}
Let $\ca$ be an exact category. Here is one central notion of this paper.
\begin{definition} \label{def:derex}
$\D_\ca : \Dia^\circ \to \CAT$ is the prederivator that associates the
bounded derived category $\cd^b(\ul\Hom(I^\circ,\ca))$ with any $I$
in $\Dia$.
\end{definition}

In the Appendix \cite{Keller07} to the article \cite{Maltsiniotis07},
B. Keller proves the following

\begin{theorem}
Let us consider the restriction of the derivator in the definition
\ref{def:derex} to the $2$-subcategory $\Dia_\f^\circ$. Then, the
prederivator $\D_\ca$ we obtain this way is a triangulated derivator.
\end{theorem}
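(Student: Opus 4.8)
\emph{Sketch of proof.} The plan is to verify the derivator axioms from \textbf{Der 1} through \textbf{Der 7} for $\D_\ca$, working throughout with diagrams in $\Dia_\f$; apart from \textbf{Der 3} and \textbf{Der 4}, which form the heart of the matter, the verifications are comparatively formal once the homotopy Kan extensions are in hand. For \textbf{Der 1} one uses that $\ul\ca(I \coprod J) = \ul\ca(I) \times \ul\ca(J)$ as exact categories and that the formation of $\cc^b$, the quotient $\cc^b \to \ch^b$, and the Verdier quotient by the acyclics all commute with finite products, so that $\D_\ca(I \coprod J) \simeq \D_\ca(I) \times \D_\ca(J)$; and $\ul\ca(\emptyset)$ is the terminal category, whose bounded derived category is again terminal. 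For \textbf{Der 2} one identifies $\cc^b(\ul\ca(I))$ with $\ul\Hom(I^\circ, \cc^b\ca)$ and recalls that, by the standard analysis of acyclicity over an exact category (\cf \cite{Keller07}), a complex over $\ul\ca(I)$ is acyclic exactly when it becomes exact after the Gabriel--Quillen embedding $\ul\ca(I) \hookrightarrow \ul\Hom(I^\circ, \widehat\ca)$ into an abelian category, a condition tested vertex-wise; hence a morphism of $\D_\ca(I)$ is invertible if and only if its cone is vertex-wise acyclic, that is, if and only if the functors $i_x^*$ jointly detect it.

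The core is \textbf{Der 3}, and with it \textbf{Der 4}. For $I$ in $\Dia_\f$ I would construct $\hodirlim_I = (p_I)_!$ and $\hoinvlim_I = (p_I)_*$ by hand, as (co)bar-type totalizations: given a complex $M$ of $I^\circ$-diagrams, the simplicial (\resp cosimplicial) replacement of $M$ indexed by the nerve of $I$ involves only \emph{finite} direct sums (\resp products) of shifted terms of $M$, because the nerve of a finite directed category has only finitely many non-degenerate simplices; its totalization is therefore again a bounded complex over $\ca$, the assignment is visibly functorial, it carries quasi-isomorphisms to quasi-isomorphisms and so descends to $\cd^b$, and a direct computation shows that it is left (\resp right) adjoint to the constant-diagram functor $p_I^*$. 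For a general $u : I \to J$ the same construction, carried out fibrewise over $J$ by replacing $I$ with the comma categories $y \backslash I$ (\resp $I / y$), which again belong to $\Dia_\f$, produces an honest complex of $J^\circ$-diagrams rather than a mere pointwise recipe; one then checks that this defines a functor $u_!$ (\resp $u_*$) on $\D_\ca(J)$, adjoint to $u^*$, whose fibre at $y$ is $\hodirlim_{y \backslash I}(M|_{y \backslash I})$ (\resp $\hoinvlim_{I/y}(M|_{I/y})$) compatibly with the canonical base-change morphisms. This last assertion is precisely \textbf{Der 4}, essentially built into the construction.

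It remains to treat \textbf{Der 5}, \textbf{Der 6} and \textbf{Der 7}. For \textbf{Der 5}, given a morphism $\alpha : A \to B$ of $\D_\ca(J)$ I would represent it by a roof $A \xleftarrow{s} A' \xrightarrow{g} B$ with $s$ a quasi-isomorphism; then $g$, viewed as an object of $\cc^b(\ul\ca(\Delta_1 \times J)) \cong \ul\Hom(\Delta_1^\circ, \cc^b(\ul\ca(J)))$, is sent by $\dia_{\Delta_1, J}$ to the arrow $[A'] \to [B]$, which the isomorphism $[s]$ identifies with $\alpha$ in $\ul\Hom(\Delta_1^\circ, \D_\ca(J))$, yielding essential surjectivity; fullness follows from an analogous roof computation on morphisms of arrows. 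For \textbf{Der 6} it suffices, by the criterion recalled after Definition \ref{def:pointed}, that $\D_\ca(e) = \cd^b\ca$ possess a zero object, namely the zero complex. For \textbf{Der 7}, using the explicit formulas from the previous paragraph one computes that $(\il)_!(\il)^*F \to F$ and $F \to (\ir)_*(\ir)^*F$ are isomorphisms in $\D_\ca(\boxempty)$ if and only if the total complex of the bicomplex underlying $F$ is acyclic --- in either case the very same condition --- so that homotopy cartesian and homotopy cocartesian squares coincide; this is the stability of the bounded derived category of an exact category, which rests ultimately on $\ch^b\ca$ being triangulated and admitting Verdier quotients.

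The hard part will be \textbf{Der 3} together with \textbf{Der 4}: beyond checking that the (co)bar totalizations stay within $\cd^b$, one must verify that the comparison $\Hom_{\cd^b\ca}(N, \hoinvlim_I M) \to \Hom_{\cd^b(\ul\ca(I))}(p_I^* N, M)$ is bijective, that the fibrewise construction over $J$ is functorial in $y$ with the correct unit and counit, and that the base-change morphisms appearing in \textbf{Der 4} are invertible. The finiteness of $I$, which keeps every complex in sight bounded, and the vertex-wise nature of the exact structure on $\ul\ca(I)$ are exactly what make these verifications go through; everything else is comparatively routine.
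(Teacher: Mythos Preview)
The paper does not contain a proof of this theorem; it is stated as a result of Keller and attributed entirely to the appendix \cite{Keller07} of \cite{Maltsiniotis07}. So there is no ``paper's own proof'' to compare against, and the honest thing to note is that your sketch is attempting to reconstruct an argument that the present article deliberately outsources.

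That said, your outline is broadly in the spirit of Keller's argument in \cite{Keller07}: the Kan extensions $u_!$ and $u_*$ are constructed there by an explicit totalization of a finite (co)simplicial object built from the nerve of the relevant comma categories, and the finiteness hypothesis on $I$ is exactly what keeps these totalizations inside $\cc^b$. Your identification of \textbf{Der 3}/\textbf{Der 4} as the crux, and your mechanism for handling them, are therefore on target. A few places where your sketch would need tightening to become a proof: the adjunction bijection $\Hom_{\D_\ca(e)}(N, \hoinvlim_I M) \cong \Hom_{\D_\ca(I)}(p_I^*N, M)$ is not automatic from the bar description and in Keller's treatment goes through a careful comparison with the derived functors computed after the Gabriel--Quillen embedding; your \textbf{Der 7} paragraph asserts that both cartesianness and cocartesianness reduce to acyclicity of a single total complex, but making this precise requires identifying $(\il)_!(\il)^*F$ and $(\ir)_*(\ir)^*F$ explicitly and is less immediate than you suggest; and your \textbf{Der 5} fullness claim (``an analogous roof computation'') hides a genuine lifting problem for morphisms of arrows in a Verdier quotient that deserves at least a sentence. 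None of these is a fatal gap, but each is a place where the actual work lives.
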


Remark that triangularity is a property of a derivator as opposed
to the triangulated extra structure that one can put on an additive
category. In order to emphasize that all axioms of a triangulated
derivator ask for properties some authors (\eg, \cite{Groth11})
refer to it as a {\em stable derivator} (in analogy to the theory
of model categories and quasi--categories).

Let us call $\ADD$ the full $2$-subcategory of $\CAT$ which contains
the additive categories. We can think of the category $\EXA$ consisting
of the exact categories, \ie, additive categories endowed with {\em
some} exact structure. Morphisms in this category are known as `exact
functors', \ie, additive functors which preserve conflations. Notice
that, thanks to Proposition A.1 of \cite{Keller90}, this amounts to
ask that the additive functors preserve {\em bicartesian squares of
the structure}, \ie, bicartesian squares with two parallel maps
consisting of inflations whereas the other two parallel maps are
deflations.

An {\em additive derivator} is a derivator $\A$ whose base $\A(e)$
is an additive category. In \cite{Groth12} it is shown that this
amounts to ask that the image of the derivator $\A$ lies in the
full $2$-subcategory $\ADD$ of additive categories, \ie, it is
componentwise given by additive categories and by additive
functors among them.

Let us emphasize that in this article we consider a {\em particular}
case of the yet not defined notion of an {\em exact (pre)derivator},
\ie, whenever we mention this notion in this article we always mean
the represented exact prederivator associated with an exact category.
We leave the discussion on the general notion of an exact derivator
for future work.

Let us denote $\ul\ca : \Dia \to \CAT$ the prederivator that associates
the category $\ul\ca(I) := \ul\Hom(I^\circ,\ca)$ of contravariant
functors from $I$ to an exact category $\ca$, with any diagram $I$
in $\Dia$.
It is clear that, even if the image of $\ul\ca$ is in the $2$-category
$\CAT$, we may think, for any diagram $I$, of the {\em exact} category
$\ul\ca(I)$ endowed with short exact sequences which are argumentwise
conflations. Moreover, it is important to stress that the prederivator
just constructed is an epivalent prederivator \myref{def:epivalence}.

Clearly, the canonical embedding of an exact category into its bounded
derived category gives rise to a canonical morphism of prederivators,
which we still call
\[
can : \ul\ca \lra \D_\ca \ko
\]
given, for any diagram $I$, by the canonical embedding
\[
can_I : \ul\ca(I) \lra \D_\ca(I) = \cd^b(\ul\ca(I)) \; \mbox{.}
\]

For each bicartesian square in $\ul\ca(\boxempty)$ of type
\begin{eqnarray} \label{eqn:square}
\begin{matrix}
\\\\\\
X
\end{matrix}
&
\begin{matrix}
\\\\\\
\mbox{=}
\end{matrix}
&
\xymatrix{
X_{00} \ar@{ >->}[r] \ar@{->>}[d] & X_{01} \ar@{->>}[d] \\
X_{10} \ar@{ >->}[r] & X_{11}
}
\begin{matrix}
\\\\\\
\mbox{,}
\end{matrix}
\end{eqnarray}
its image $can_\square(X)$ is bicartesian in $\cd^b(\ul\ca(\boxempty))$,
\ie, isomorphisms
\[
X_{00} \Iso X_{10} \prod_{X_{11}}^\R X_{01} \:\:\:\:\:\: \mbox{ and }
\:\:\:\:\:\: X_{10} \coprod_{X_{00}}^\L X_{01} \Iso X_{11}
\]
hold. We say that the additive morphism $can : \ul\ca \to \D_\ca$
is {\em exact}. More generally we have the following

\begin{definition} \label{def:extria}
Let $\E$ and $\F$ be triangulated derivators. 

\begin{itemize}
\item[a)] An additive morphism $F : \ul\ca \to \E$ is said
{\em exact} or {\em $\partial$-morphism} if $F_\square(X)$ is
bicartesian in $\E(\boxempty)$ for each $X$ of type (\ref{eqn:square}).
\item[b)] An additive morphism of derivators  $F :\E \to \F$ is
{\em triangulated} if $F_\square(X) \in \F(\boxempty)$ is
bicartesian for each bicartesian $X \in \E(\boxempty)$.
\item[c)] An additive $2$-morphism $\alpha : F \to G$ of
$\partial$-morphisms of derivators $F$ and $G$ from
$\ul\ca$ to $\E$ is {\em exact} (or $\partial$) if, for
each bicartesian square $X$ in $\ul\ca(\boxempty)$ of type
\myref{eqn:square}, the morphism
\[
\alpha_{\square}^X : F_\square X \to G_\square X
\]
is a {\em bicartesian morphism} of bicartesian squares in
$\E(\boxempty)$.
More explicitly this means that the canonical morphisms
\[
\eta^{\alpha_\square^X} : \alpha_{\square}^X \lra
{i_\righthalfcup}_* {i_\righthalfcup}^* \alpha_{\square}^X \ko
\]
\[
\eps^{\alpha_\square^X} : {i_\lefthalfcap}_! {i_\lefthalfcap}^*
\alpha_{\square}^X \lra \alpha_{\square}^X \ko
\]
induced by the adjunction isomorphisms $\eta^{X} : X \iso \ir_*\ir^*X$
and $\eps^{X} : \il_!\il^* X \iso X$, are invertible.
\item[d)] An additive $2$-morphism $\alpha : F \to G$ of triangulated
morphisms of derivators $F$ and $G$ from $\E$ to $\F$ is {\em triangulated}
if, for each bicartesian square $X$ in $\E(\boxempty)$, the morphism
\[
\alpha_{\square}^X : F_\square X \to G_\square X
\]
is a {\em bicartesian morphism} of bicartesian squares in
$\F(\boxempty)$.
\end{itemize}
\end{definition}

\begin{remark} \label{rmk:all}
Since $\varepsilon : {i_\lefthalfcap}_! {i_\lefthalfcap}^* \to \id$
is a natural transformation the following square
\[
\xymatrix{
{i_\lefthalfcap}_! {i_\lefthalfcap}^* F_\square X
\ar[r]^<>(0.5){\eps^{F_\square X}}
\ar[d]_{{i_\lefthalfcap}_! {i_\lefthalfcap}^* \alpha_\square^ X} &
F_\square X \ar[d]^{\alpha_\square^X} \\
{i_\lefthalfcap}_! {i_\lefthalfcap}^* G_\square X
\ar[r]_<>(0.5){\eps^{G_\square X}} & G_\square X
}
\]
is commutative. It follows that if the horizontal arrows are invertible
then the canonical morphism $\eps^{\alpha_\square^X} : {i_\lefthalfcap}_!
{i_\lefthalfcap}^* \alpha_{\square}^X \lra \alpha_{\square}^X$ is
invertible, too. An analogous observation holds for the natural
transformation $\eta : \id \to {i_\righthalfcup}_* {i_\righthalfcup}^*$.
Therefore, the conditions in items c) and d) are always true for all
additive $2$-morphisms $\alpha : F \to G$, \ie, we get that {\em all}
additive $2$-morphisms of exact/triangulated (pre)derivators are
automatically exact or triangulated.
\end{remark}

We shall prove in proposition \ref{prop:rightdef} that our
definition \ref{def:extria} is correct, in the sense that if
an additive morphism of (pre)derivators $F$ preserves bicartesian
squares then it locally preserves conflations/distinguished
triangles of the canonical structure, thanks to the fact that
the involved derivators enjoy the `epivalence axiom' (\ie,
if axiom {\bf Der 5} holds).

The {\em composition} of two morphisms is clear. Thus, the
$\partial$-morphisms between any fixed pair of derivators
$\A$ and $\E$, and their $2$-$\partial$-morphisms, are the
objects and, respectively, the morphisms of a category that
we indifferently denote $\ul\HOM_{ex}(\A,\E)$ or
$\ul\HOM_{\partial}(\A,\E)$. Analogously, the category
$\ul\HOM_{tr}(\E, \F)$ is the category of triangulated morphisms
of derivators from $\E$ to $\F$ and their triangulated morphisms.

The following is the main result of this article.

\begin{theorem} \label{thm:main}
Let $\E$ be a triangulated derivator of type $\Dia_\f$. Then,
the canonical morphism $can : \ul\ca \to \D_\ca$ induces an
equivalence of categories
\[
\ul\HOM_{tr}(\D_\ca,\E) \Iso \ul\HOM_{ex}(\ul\ca,\E) .
\]
\end{theorem}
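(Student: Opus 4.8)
The plan is to deduce the statement from Keller's universal property of epivalent towers \cite{Keller91} by systematically comparing triangulated derivators of type $\Dia_\f$ with triangulated towers. For a (pre)derivator $\T$ of type $\Dia_\f$ write $\T^{\mathrm{tow}}$ for its restriction to the hypercubes $(\Delta_1)^{\times n}$, $n\ge 0$, together with the structure functors induced by the face and degeneracy maps. The first step is a lemma asserting that $\T\mapsto\T^{\mathrm{tow}}$ is a $2$-functor from triangulated derivators of type $\Dia_\f$ to Keller's epivalent triangulated towers: conservativity ({\bf Der 2}) and the adjoints $u_!,u_*$ ({\bf Der 3}, {\bf Der 4}) furnish the tower structure, axiom {\bf Der 5} is exactly the fullness and essential surjectivity that makes the tower \emph{epivalent} in Keller's sense, and {\bf Der 6}, {\bf Der 7} together with Theorem \ref{thm:Malts} supply the triangulations and their compatibilities. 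Using the identity $\D_\ca(I\times J)=\cd^b(\ul\Hom((I\times J)^\circ,\ca))$ one checks that $\D_\ca^{\mathrm{tow}}$ is precisely Keller's epivalent tower enhancing $\cd^b(\ca)$, that $\ul\ca^{\mathrm{tow}}$ is the exact tower of $\ca$ (levelwise $\ul\Hom(((\Delta_1)^{\times n})^\circ,\ca)$ with componentwise conflations, cf. Example \ref{ex:exact cats}c), and that $can^{\mathrm{tow}}$ is Keller's canonical morphism. Keller's theorem then gives an equivalence $\ul\HOM_{tr}(\D_\ca^{\mathrm{tow}},\E^{\mathrm{tow}})\iso\ul\HOM_{ex}(\ul\ca^{\mathrm{tow}},\E^{\mathrm{tow}})$, so it suffices to show that restriction to towers induces equivalences $\ul\HOM_{tr}(\D_\ca,\E)\iso\ul\HOM_{tr}(\D_\ca^{\mathrm{tow}},\E^{\mathrm{tow}})$ and $\ul\HOM_{ex}(\ul\ca,\E)\iso\ul\HOM_{ex}(\ul\ca^{\mathrm{tow}},\E^{\mathrm{tow}})$ compatible with $can$; the second is the easy one, $\ul\ca$ being an epivalent prederivator essentially controlled by $\Delta_1$.

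For full faithfulness of restriction to towers on the triangulated side I would argue as follows. By Remark \ref{rmk:all} every additive $2$-morphism between triangulated morphisms $\D_\ca\to\E$ is automatically triangulated, hence compatible with cones; since for each $I$ the triangulated category $\D_\ca(I)=\cd^b(\ul\ca(I))$ is generated by the essential image of $can_I$ under (de)suspension and cones, and since cones are computed \emph{functorially} inside the derivator $\D_\ca$ via cocartesian squares and extension by zero (axioms {\bf Der 3}, {\bf Der 6}, {\bf Der 7}, together with the redundancy of the connecting morphism for $\partial$-morphisms established in Proposition \ref{prop:rightdef}), every component $\alpha_I$ is determined by its restriction to the hypercube components and, conversely, any compatible system on the hypercubes extends. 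The same generation argument applied to objects rather than to $2$-morphisms shows that a triangulated morphism of derivators out of $\D_\ca$ is determined on each $\D_\ca(I)$ by its restriction to $can_I(\ul\ca(I))$, giving faithfulness; essential surjectivity on Hom-objects — extending a triangulated tower morphism to all of $\Dia_\f$ — is the substantive point.

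The core of the proof is therefore: given a triangulated morphism of towers $G:\D_\ca^{\mathrm{tow}}\to\E^{\mathrm{tow}}$ (equivalently, after Keller, an arbitrary $\partial$-morphism $F:\ul\ca\to\E$), produce a triangulated morphism of derivators $\tilde F:\D_\ca\to\E$ defined on \emph{every} $I\in\Dia_\f$. Here Keller's induction on the dimension of a hypercube is unavailable, so instead one constructs $\tilde F_I:\cd^b(\ul\ca(I))\to\E(I)$ by first producing its underlying diagram: the $\partial$-morphism $F$ together with $2$-functoriality of $\D_\ca$ in the diagram variable assigns to each object of $\D_\ca(I)$ a coherent presheaf in $\ul\Hom(I^\circ,\E(e))$, and one then lifts it through the diagram functor $\dia_I:\E(I)\to\ul\Hom(I^\circ,\E(e))$. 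Making this lift functorial and coherent in $I$ is exactly where a careful analysis of $\dia_I$ — its fullness and essential surjectivity for arbitrary $I$ — is required; this is carried out in subsections \ref{ss:ff} and \ref{ss:epidia} under the Toda condition (Remark \ref{rmk:Toda}), while conservativity of the fibers ({\bf Der 2}) is what forces $\tilde F_I$ and the coherence isomorphisms $\varphi_u$ to be uniquely determined and to assemble into a genuine, necessarily triangulated, morphism of derivators. I expect this extension step — reconstructing the values of $\tilde F$ on all finite directed diagrams from its values on hypercubes via $\dia_I$ — to be the main obstacle, which is why its treatment is deferred to Section \ref{s:cor}.
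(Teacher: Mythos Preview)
Your plan misidentifies both the paper's actual argument and the role of Section~\ref{s:cor}. The paper does \emph{not} prove Theorem~\ref{thm:main} by restricting to towers and then extending back along $\dia_I$. Instead, for each fixed $I\in\Dia_\f$ it \emph{shifts} the whole situation: one forms the triangulated tower $\sfe^I$ with $\sfe^I_n=\E(C_n\times I)$ and the exact tower $\ul{\sfa^I}$ with $\ul{\sfa^I}_n=\ul\ca(C_n\times I)\cong\ul\Hom(C_n^\circ,\ul\ca(I))$, observes that $\sfd_{\ca^I}=\sfd_\ca^{\,I}$, and applies Keller's theorem \emph{once for every $I$} to the exact morphism of towers $F^I:\ul{\sfa^I}\to\sfe^I$. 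The base (evaluation at $C_0$) of the resulting $\tilde F^I$ is the desired $\tilde F_I:\D_\ca(I)\to\E(I)$, already a triangulated functor for each $I$; coherence in $u:I\to J$ is then checked with the elementary generation lemma \cite[Lemme~2]{Keller07}, and Proposition~\ref{prop:redundancy} upgrades the resulting additive morphism of derivators to a triangulated one. No lifting through $\dia_I$ and no Toda-type hypothesis is used anywhere in the proof of Theorem~\ref{thm:main}.

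Your route, by contrast, has a genuine gap. To extend a tower morphism $G:\D_\ca^{\mathrm{tow}}\to\E^{\mathrm{tow}}$ to all of $\Dia_\f$ you propose to send $X\in\D_\ca(I)$ to a presheaf in $\ul\Hom(I^\circ,\E(e))$ and then lift through $\dia_I$. But $\dia_I$ is only full and essentially surjective, not faithful, so such lifts are neither unique nor functorial without further control; Propositions~\ref{prop:fullfaith} and~\ref{prop:ess-surj} supply that control \emph{only under the Toda vanishing hypotheses}, which are certainly not available for arbitrary objects of $\cd^b(\ul\ca(I))$ (indeed they already fail for shifts of objects of $\ca$). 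Section~\ref{s:cor} is not a deferred completion of the proof of Theorem~\ref{thm:main}: it proves the separate Theorem~\ref{thm:ext}, which carries the Toda condition as an explicit hypothesis and concerns extending a single basic $\partial$-functor, not establishing the general equivalence. The ``shifted tower'' trick is precisely what lets the paper bypass the obstacle you identify as the main one.
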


Let $\ct$ be a triangulated category and $\ca$ an exact category. A
{\em $\partial$-functor (also called exact functor)} is an additive
functor $F : \ca \to \ct$ endowed with functorial distinguished
triangles
\[
\xymatrix{
F(X) \ar[r] & F(Y) \ar[r] & F(Z) \ar[r]^{\partial\varepsilon} & \Sigma F(X)
}
\]
for each conflation $\varepsilon$ :
\[
\xymatrix{
X \ar@{ >->}[r] & Y \ar@{->>}[r] & Z \quad .
}
\]
A {\em $2$-morphism of $\partial$-functors} $\alpha : F \to F'$
is a natural transformation such that the square
\[
\xymatrix{
FZ \ar[r]^{\partial\eps} \ar[d]_{\alpha^Z} &
\Sigma FX \ar[d]^{\Sigma\alpha^X} \\
F'Z \ar[r]^{\partial'\eps} & \Sigma F'X
}
\]
commutes, for each conflation $\eps$.
This also gives a category that we indifferently denote
$\ul\Hom_{ex}(\ca,\ct)$ or $\ul\Hom_{\partial}(\ca,\ct)$.

Let us recall that a (triangulated) category (\resp, a (triangulated)
functor) is {\em basic} if it occurs as the evaluation over the terminal
diagram $e$ of a (triangulated) derivator (\resp, of a morphism of
(triangulated) derivators). Apparently, it is not known whether all
(triangulated) categories are basic, nor if the `extension' of a
(triangulated) category to a (triangulated) derivator having it as a
base is unique. However, we have some reasons to think of a negative
answer to both of these statements.

We want to stress that `basic' always implies a {\em choice} of a
derivator. As a consequence, when we claim that some basic morphism
extends `uniquely', we always mean `uniquely with respect to a
{\em chosen} derivator'.

The main aim of the second part of this article is to show that
from Theorem \ref{thm:main} we can prove the following theorem,
whose proof is completed by the end of the paper.

\begin{theorem} \label{thm:ext}
\begin{itemize}
\item[a)] Let $F : \ca \to \ct$ be a $\partial$-functor.
Suppose that $\ct = \T(e)$ for some triangulated derivator $\T$
of type $\Dia_\f$ and that
\[
\Hom_{\ct}(F(X),\Sigma^n F(Y)) = 0
\]
for each $n < 0$, for all $X$, $Y$ in $\ca$ (Toda condition).
Then, the functor $F$ uniquely extends (up to a unique basic
natural isomorphism) to a basic triangulated functor
\[
\tilde{F} : \cd^b\ca \to \ct .
\]
\item[b)] Let $F, F' : \cd^b\ca \to \ct$ be two basic triangulated
functors such that the (Toda) conditions
\[
\Hom_{\ct}(\Sigma^n F(X), F(Y)) = 0 \ko  \qquad n>0 \ko
\]
\[
\Hom_{\ct}(\Sigma^n F'(X), F'(Y)) = 0 \ko  \,\,\,\quad n>0 \ko
\]
\[
\Hom_{\ct}(\Sigma^n F(X), F'(Y)) = 0 \ko \,\,\,\,\,\quad n>0 \ko
\]
hold for all $X, Y$ in $\ca$. Suppose that $\mu$ is a $2$-morphism
between the restrictions of $F$ and $F'$ to $\ca$. Then, the
$2$-morphism $\mu$ uniquely extends to a basic $2$-morphism of
triangulated functors
\[
\tilde{\mu} : F \to F' .
\]
\end{itemize}
\end{theorem}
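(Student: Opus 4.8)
The plan is to deduce both statements from Theorem \ref{thm:main} by a systematic passage between the ``basic'' world of triangulated categories and the derivator world. The key technical point, which will occupy the bulk of the argument, is the careful study of the diagram functor $\dia_I : \T(I) \to \ul\Hom(I^\circ,\T(e))$ under the Toda hypothesis; I expect this to be the main obstacle. As indicated in the introduction, one first establishes (in the subsections on fullness and essential surjectivity of $\dia_I$) that under a suitable vanishing condition $\Hom_{\T(e)}(F(X),\Sigma^n F(Y)) = 0$ for $n<0$, the functor $\dia_I$ restricted to the relevant subcategory of objects is \emph{epivalent} (fully faithful on the appropriate objects, essentially surjective onto the diagrams that arise). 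The reason the naive induction of Keller fails is precisely that $\Dia_\f$ is not generated by hypercubes; the substitute is this direct analysis of $\dia_I$ for arbitrary finite directed $I$.

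For part a), given a $\partial$-functor $F : \ca \to \ct$ with $\ct = \T(e)$, I would first promote $F$ to a morphism of (pre)derivators $\ul{F} : \ul\ca \to \T$. Concretely, for each $I$ in $\Dia_\f$ one defines $\ul{F}_I : \ul\ca(I) = \ul\Hom(I^\circ,\ca) \to \T(I)$ by postcomposing with $F$ to land in $\ul\Hom(I^\circ,\ct)$ and then using essential surjectivity of $\dia_I$ (valid under the Toda condition, since the objects in the image of $F$ satisfy the required vanishing) to lift; fullness of $\dia_I$ pins down the value on morphisms and gives the coherence isomorphisms $\varphi_u$. One checks that $\ul{F}$ is additive and that it is a $\partial$-morphism, i.e.\ it sends bicartesian squares of the exact structure of $\ul\ca(\boxempty)$ to bicartesian squares in $\T(\boxempty)$: this follows because $F$ is a $\partial$-functor, so it carries conflations to distinguished triangles, and a square built from a conflation becomes bicartesian in $\T(\boxempty)$ exactly by axiom {\bf Der 7} together with the description of triangles via bicartesian squares. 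Now Theorem \ref{thm:main} applied with $\E = \T$ produces a triangulated morphism of derivators $\tilde{\ul F} : \D_\ca \to \T$ with $\tilde{\ul F} \circ can \cong \ul F$ as $\partial$-morphisms, canonically. Setting $\tilde F := (\tilde{\ul F})_e : \cd^b\ca = \D_\ca(e) \to \T(e) = \ct$ gives the desired basic triangulated functor extending $F$. Uniqueness up to a unique basic natural isomorphism: if $\tilde F'$ is another such basic triangulated functor, it is by definition $(G)_e$ for some triangulated morphism $G : \D_\ca \to \T$ (this is what ``basic'' means for $\tilde F'$), and $G \circ can$ is a $\partial$-morphism extending $F$; by the equivalence in Theorem \ref{thm:main} (essential surjectivity gives a morphism, fullness and faithfulness on $2$-morphisms give the isomorphism and its uniqueness) $G \cong \tilde{\ul F}$, hence $\tilde F' \cong \tilde F$ uniquely, where the isomorphism is basic because it is the $e$-component of a $2$-morphism of derivators.

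For part b), the strategy is the same but one level up, working with $2$-morphisms. Given basic triangulated functors $F, F' : \cd^b\ca \to \ct$, write $F = G_e$, $F' = G'_e$ for triangulated morphisms $G, G' : \D_\ca \to \T$ (this uses the chosen derivator $\T$; recall the remark that ``basic'' always entails a choice). The three displayed Toda vanishing conditions are exactly what is needed to apply the $2$-morphism form of the fullness/faithfulness of the $\dia$ functors: they ensure that $G \circ can$ and $G' \circ can$ are $\partial$-morphisms and that the given $2$-morphism $\mu$ between the restrictions of $F$ and $F'$ to $\ca$ — which is a $2$-morphism of $\partial$-functors $\ca \to \ct$ — lifts uniquely to a $2$-morphism $\ul\mu : G \circ can \to G' \circ can$ of $\partial$-morphisms of derivators. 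Here the vanishing for mixed pairs $\Hom_{\ct}(\Sigma^n F(X), F'(Y)) = 0$ for $n>0$ is precisely the input that guarantees the relevant morphism spaces between the liftings are discrete enough for uniqueness. Then the full faithfulness part of the equivalence $\ul\HOM_{tr}(\D_\ca,\E) \Iso \ul\HOM_{ex}(\ul\ca,\E)$ in Theorem \ref{thm:main} transports $\ul\mu$ to a unique $2$-morphism $\tilde\mu_{\mathrm{der}} : G \to G'$ of triangulated morphisms of derivators; its $e$-component $\tilde\mu := (\tilde\mu_{\mathrm{der}})_e : F \to F'$ is the required basic $2$-morphism of triangulated functors, and uniqueness is inherited from uniqueness of $\ul\mu$ and faithfulness of the equivalence on $2$-cells. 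The hard part throughout is verifying the epivalence properties of $\dia_I$ under the Toda condition with enough precision — including that the mixed vanishing conditions in part b) are the correct ones to force uniqueness of lifts of $2$-morphisms — but once those technical subsections are in place, both statements are formal consequences of Theorem \ref{thm:main} by evaluating at $e$.
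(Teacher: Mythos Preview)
Your proposal is correct and follows essentially the same route as the paper: promote the basic $\partial$-functor (resp.\ $2$-morphism) to an exact morphism (resp.\ $2$-morphism) of prederivators $\ul\ca \to \T$ using the full faithfulness and essential surjectivity of $\dia_I$ under the Toda condition (Propositions \ref{prop:fullfaith} and \ref{prop:ess-surj}, packaged in the paper as Theorem \ref{thm:ext1}), then apply Theorem \ref{thm:main} and evaluate at $e$. The only organisational difference is that the paper isolates the promotion step as a standalone statement (Theorem \ref{thm:ext1}), which in particular makes the uniqueness in part a) clean: you need to know that any two $\partial$-morphisms $\ul\ca \to \T$ with isomorphic base are canonically isomorphic (not just that one such lift exists), and this is exactly the uniqueness clause of Theorem \ref{thm:ext1}~a) together with its part b); your sketch gestures at this but does not quite separate it from the equivalence of Theorem \ref{thm:main}.
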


Let us notice that item b) in Theorem \ref{thm:ext} {\em differs}
from the analogous fact, true for the towers, which is contained in
Corollary 2.7 in \cite{Keller91}. In fact, the same conclusion there
follows from the last of our three Toda conditions alone. The
analogous situation holds for the item b) in our Theorem
\ref{thm:ext1} when compared with Keller's Theorem 2.7 in
\cite{Keller91}.

\begin{remark}
The extension of $F : \ca \to \ct$ to the derived category is unique
once the derivator $\T$ is fixed. The question whether the extension
is not unique since there might be more than one triangulated derivator
with the same base $\ct$ remains open.
\end{remark}

\begin{remark} \label{rmk:Toda}
The name {\em Toda condition} is inspired by the use made by
topologists who refer to the obstructions to building a Postnikov
system as {\em Toda brackets}. Let us recall that the same condition
already appears in the foundational work \cite{Kapranov88} by M.
Kapranov.
More importantly to us for the similarity to the results of the
present paper are the theorems about unicity of enhancement of
some triangulated categories and their functors to
$\opname{DG}$-categories proved by V. Lunts and D. Orlov in
\cite{LuntsOrlov10} which are based on the same condition as
ours. We refer to this article for a thorough explanation of
some important cases in which the Toda condition holds.
\end{remark}

Let us recall that Heller \cite{Heller97}, Franke \cite{Franke96},
Cisinski \cite{Cisinski08}, \ldots have shown universality properties
of many important derivators. The following result is proved by Jens
Franke in \cite{Franke96} in the case that $\Dia$ is the full
subcategory of $\Cat$ consisting of finite posets and by Alex
Heller in \cite{Heller97} when $\Dia=\Cat$.

\begin{theorem} \label{thm:spectra}
Let $\S p : \Dia^\circ \to \CAT$ be the {\em triangulated derivator
of spectra} associated with the homotopy category of finite spectra,
\ie, spectra with finitely many cells (more precisely with a localizer
defining this homotopy category). Let $\D : \Dia_\f^\circ \to \CAT$ be
a triangulated derivator. Then, we have the equivalence of categories
\[
\ul\HOM_!(\S p,\D) \Iso \D(e), \:\:\: F \mapsto (F_e)(S^0) .
\]
\end{theorem}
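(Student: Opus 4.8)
The plan is to exhibit the derivator of spectra $\S p$ as the universal triangulated derivator generated by a single object, namely the sphere $S^0 \in \S p(e)$, so that the evaluation functor $F \mapsto (F_e)(S^0)$ realizes the desired equivalence. First I would fix the object $S^0 = \Sigma^\infty S^0$ in $\S p(e)$ as a choice of a "universal cell". The construction of a quasi-inverse to the evaluation map is the heart of the matter: given an object $Y \in \D(e)$ of an arbitrary triangulated derivator $\D : \Dia_\f^\circ \to \CAT$, one must produce a cocontinuous morphism of triangulated derivators $F^Y : \S p \to \D$ with $(F^Y_e)(S^0) \cong Y$, and this assignment must be functorial in $Y$ and compatible with $2$-morphisms.

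The key steps, in order, are the following. Step one: recall that the homotopy category of finite spectra $\S p(e)$ is the free triangulated category with shift and small coproducts generated under these operations by $S^0$, in the strong sense appropriate for derivators — this is essentially the statement that finite spectra are built from spheres by (iterated homotopy) colimits, and it is the content of the cited work of Franke \cite{Franke96} and Heller \cite{Heller97}. Step two: use axiom {\bf Der 3} (existence of $u_!$) together with axiom {\bf Der 4} (base change) and the pointedness/stability of $\D$ to define $F^Y$ on each $\S p(I)$ by left Kan extending along the Yoneda-type presentation of $\S p(I)$ in terms of $\D$-shaped homotopy colimits of copies of $Y$; concretely, one transports the presentation of a finite spectrum as a finite homotopy colimit of shifted spheres to the corresponding homotopy colimit of shifted copies of $Y$ in $\D(I)$. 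Step three: verify the coherence data $\varphi_u : F^Y_I u^* \iso u^* F^Y_J$ by invoking the compatibility of $u^*$ with $u_!$ expressed by the base change isomorphisms $c'_\cd$ from {\bf Der 4}, and check the cocycle condition $\varphi_{uv} = (v^*\varphi_u)(\varphi_v u^*)$ using the relation $\varphi^{uv,X} = \varphi^{u,v_!X}\circ u_!(\varphi^{v,X})$ recorded in subsection \ref{ss:mor}. Step four: check that $F^Y$ lands in $\ul\HOM_!(\S p, \D)$, i.e. that it is cocontinuous (preserves $u_!$), which is automatic from the construction as a left Kan extension, and that it is triangulated in the sense of Definition \ref{def:extria}(b), which follows because left adjoints preserve cocartesian squares and $\D$ is stable via {\bf Der 7}. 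Step five: establish that $F \mapsto (F_e)(S^0)$ and $Y \mapsto F^Y$ are mutually quasi-inverse: one direction is the normalization $(F^Y_e)(S^0)\cong Y$ built into the construction; the other direction, $F^{(F_e)(S^0)} \cong F$, follows because both morphisms are cocontinuous, agree on $S^0$, and $\S p$ is generated by $S^0$ under the operations that cocontinuous triangulated morphisms preserve — hence they agree on all of $\S p(I)$ by a uniqueness-of-extension argument analogous to (indeed a special case of the philosophy behind) Theorem \ref{thm:main}. Finally, the action on $2$-morphisms is handled by the same Kan-extension bookkeeping, using Remark \ref{rmk:all} to see that all additive $2$-morphisms between triangulated morphisms are automatically triangulated.

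The hard part will be Step one together with the coherence verification in Step three. The generation statement for finite spectra inside the $2$-functorial framework of derivators — that $\S p(I)$ is, uniformly in $I$ and compatibly with the restriction functors $u^*$, freely generated by the constant sphere under shifts and finite homotopy colimits — is exactly where one must lean on the deep input of Franke and Heller rather than on the formal axioms {\bf Der 1}--{\bf Der 7}; making this precise enough to drive the Kan-extension construction, and then checking that the resulting $\varphi_u$ satisfy all the pentagon/interchange identities of Definition \ref{def:dermor}, is the technical core. Everything downstream (cocontinuity, triangulatedness, the quasi-inverse property, and $2$-functoriality) is then a formal consequence of the universal property of left Kan extension combined with the stability axiom {\bf Der 7} and the base change axiom {\bf Der 4}.
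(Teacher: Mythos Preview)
The paper does not contain a proof of Theorem~\ref{thm:spectra}. Immediately before the statement the author writes that ``the following result is proved by Jens Franke in \cite{Franke96} in the case that $\Dia$ is the full subcategory of $\Cat$ consisting of finite posets and by Alex Heller in \cite{Heller97} when $\Dia=\Cat$'', and the theorem is simply recorded as a historical result for context; no argument is given or sketched. So there is nothing in the paper to compare your proposal against.

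As for the proposal itself: the broad shape is the right one --- one does prove such statements by showing that $\S p$ is freely generated, in the appropriate derivator-theoretic sense, by the single object $S^0$, and then transporting presentations along homotopy left Kan extensions. But several points are imprecise in ways that matter. In Step one you describe $\S p(e)$ as ``the free triangulated category with shift and small coproducts''; for \emph{finite} spectra only finite coproducts are available, and in any case the freeness statement you need is at the level of the derivator $\S p$ (uniformly in $I$, compatibly with all $u^*$), not merely of the base category --- this is exactly the nontrivial content of \cite{Franke96} and \cite{Heller97}, and it cannot be extracted from the axioms {\bf Der 1}--{\bf Der 7} alone. In Step two, ``left Kan extending along the Yoneda-type presentation'' is too vague to pin down a functor $F^Y_I : \S p(I) \to \D(I)$: a finite spectrum does not come with a canonical finite presentation, so one must either work with a model (CW-spectra, say) and check independence of choices, or invoke directly the universal property established by Franke/Heller, which already \emph{is} the theorem. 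Finally, the appeal to Theorem~\ref{thm:main} in Step five is misplaced: that theorem concerns the exact prederivator $\ul\ca$ of an exact category and its derived derivator $\D_\ca$, a quite different universal problem from the one here.
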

Here, the decoration $!$ indicates commutativity with all homotopy
Kan extensions. More precisely, we can refer to these as `cocontinuous
morphisms' (because this is the correct universal property if we drop
the finiteness assumptions). In the stable case and given our choice
of diagram categories, it follows by proposition 5.1 from the paper
\cite{PontoShulman16} by Ponto–-Shulman that such morphisms also
preserve all finite limits, and by the pointwise formula for right
Kan extensions also all right Kan extensions in this diagram category
(see \cite{Groth16}).

\section{Keller's towers} \label{s:tow}
We begin by defining a $2$-category that we call $\Cubes$.
For any positive integer $n$, let us denote $C_n$ the $n$-product
$\Delta_1 \times \Delta_1 \times \ldots \times \Delta_1$.
Clearly, $C_0$ is the terminal category $e = \{0\}$ and $C_2$ is the
diagram $\boxempty$. Sometimes, we will refer to these diagrams as
$n$-cubes. These diagrams are the objects of the $2$-category $\Cubes$.

Let us think of the objects $C_n$ as the partially ordered
$n$-dimensional cubes with side of length one, so that any vertex
$x$ is identified by an $n$-tuple $(x_1, \ldots, x_n)$ of numbers in
the set  $\{ 0, 1 \}$.
The morphisms in $\Cubes$ are all possible compositions of the
following order preserving maps. For any $n \in \N$,
\[
i^j_\varepsilon : C_n \lra C_{n+1}, \qquad (x_1, \dots, x_n)
\mapsto (x_1, \ldots, x_{j-1}, \varepsilon, x_j, \ldots, x_n) \ko
\]
\[
p^j : C_{n+1} \lra C_n, \qquad (x_1, \dots, x_n)
\mapsto (x_1, \ldots, x_{j-1}, x_{j+1}, \ldots, x_n) \ko
\]
where $\varepsilon \in \{ 0, 1 \}$. Clearly, there are relations
among these functors (see \cite{Keller91}).
The $2$-morphisms in $\Cubes$ are given by the following order relation :
for any pair of morphisms $u,v$ from $C_l$ to $C_m$, we write
$u \Rightarrow v$ if there is an arrow $u(x) \to v(x)$, for all $x \in C_l$.

Notice that $\Cubes$ is {\em not} a full $2$-subcategory of $\Dia_\f$.

\begin{definition}
A {\em Keller's tower of additive categories} or simply a {\em tower
of additive categories} $\sfe$ is a $2$-functor from the opposite
$2$-category $Cubes^\circ$ to the $2$-category of additive
categories $\ADD$.
\end{definition}

For any morphism $u : C_l \to C_m$ and any $2$-morphism
$\alpha : u \Rightarrow v$ in $Cubes$, we denote
$u^* : \sfe(C_m) \to \sfe(C_l)$ the induced morphism and
$\alpha^* : v^* \Rightarrow u^*$ the induced $2$-morphism,
as in the case of derivators.

Clearly, any additive (pre-)derivator $\E$ gives rise to a tower $\sfe$
when it is restricted to $\Cubes^\circ$. An important example of additive
tower is the restriction of the prederivator $\ul\ca$ defined in
\ref{ss:triader} to $\Cubes^\circ$.

If the category $\ca$ has an exact structure, then we can speak of a
{\em tower of exact categories} $\ul\ca$ by endowing each $\ul\ca(C_n)$
with the pairs of composable $2$-morphisms of functors (\ie, natural
transformations) whose evaluation at each $x \in C_n^\circ$ is a
conflation of $\ca$. Analogously, we can define the notion of {\em
tower of triangulated categories} as a contravariant $2$-functor from
$\Cubes$ to the $2$-category of triangulated categories $\TRIA$. As
an important example we can consider, for any arbitrary exact category
$\ca$, the triangulated tower defined by $\sfd_\ca : \Cubes^\circ
\to \CAT$, \ie, the tower that associates the bounded derived
category $\cd^b(\ul\Hom(C_n^\circ,\ca))$ with any $C_n$ in $\Cubes$.
With analogy to the derivator case, we can define the {\em morphisms
of towers} (called `towers of morphisms' in \cite{Keller91}) and their
compositions.

\begin{definition}
Let $\sfd$ and $\sfe$ be two towers. A {\em morphism of towers}
$F : \sfd \to \sfe$ consists of
\begin{itemize}
\item an additive functor $F_n : \sfd_n \to \sfe_n$, for each $n \in \N$;
\item an isomorphism of functors
\[
\varphi_u : F_m u^* \iso u^* F_n \ko
\]
for each morphism $u : C_m \to C_n$ in $\Cubes$;
\end{itemize}
such that
\begin{itemize}
\item $\varphi_{\id_n} = \id_{F_n}$, for each $n \in \N$;
\item $\varphi_{uv} = (v^* \varphi_u)(\varphi_v u^*)$,
for each pair of morphisms
$C_l \stackrel{v}{\to} C_m \stackrel{u}{\to} C_n$ in $\Cubes$;
\item $\varphi_u \alpha^* = \alpha^* \varphi_v$, for each $2$-morphism
$\xymatrix{C_m \rtwocell^u_v{_\alpha} & C_n}$ in $\Cubes$.
\end{itemize}
\end{definition}

The {\em composition} of two morphisms of towers $F : \sfd \to \sfe$
and $G : \sfe \to \sff$ is defined by setting $(GF)_n = G_n F_n$, for
all $n \in \N$, and $(\gamma\varphi)_u = (G_n \varphi_u)(\gamma_u F_m)$,
for all morphisms $u : C_m \to C_n$ in $\Cubes$.

If $F$ and $G$ are two morphisms of towers from $\sfd$ to $\sfe$,
a {\em $2$-morphism} $\alpha : F \to G$ is given by a functor
$\alpha_n : F_n \to G_n$, for each $n$ in $\N$, such that
$(u^* \alpha_n)\varphi_u = \gamma_u(\alpha_m u^*)$, for all
morphisms $u : C_m \to C_n$ of $\Cubes$. The {\em composition}
of two such morphisms is clear. Thus, the morphisms between any
fixed pair of towers $\sfd$ and $\sfe$ and the $2$-morphisms among
them are the objects and, respectively, the morphisms of an additive
category that we denote $\ul\HOM_{add}(\sfd,\sfe)$, or simply
$\ul\HOM(\sfd,\sfe)$.

It is clear how to define morphisms of exact towers (\resp, morphisms
of triangulated towers) and $2$-morphisms among them. Thus, for any pair
of exact (\resp, triangulated) towers $\sfd$ and $\sfe$, we get the
category $\ul\HOM_{ex}(\sfd,\sfe)$ (\resp, $\ul\HOM_{tr}(\sfd,\sfe)$).
A morphism of towers $F: \sfe \to \sff$ from a tower of exact
categories $\sfe$ to a tower of triangulated categories $\sff$ is
a {\em $\partial$-morphism} or {\em exact morphism of towers} from
$\sfe$ to $\sff$ if $F_n : \sfe_n \to \sff_n$, $n \in \N$, is a
sequence of $\partial$-functors. We denote $\ul\HOM_{ex}(\sfe,\sff)$
the category of these functors.

\begin{theorem}[Keller, \cite{Keller91}] \label{thm:kellerthm}
Let $\ca$ be an exact category and $\sfe$ a tower of triangulated
categories. Then, the canonical morphism $can : \ul\ca \to \sfd_\ca$
induces an equivalence of categories
\[
\ul\HOM_{tr}(\sfd_\ca,\sfe) \Iso \ul\HOM_{ex}(\ul\ca, \sfe) .
\]
\end{theorem}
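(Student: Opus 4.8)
\noindent\emph{Sketch of proof.}
The functor in the statement is precomposition with $can$: a triangulated morphism of towers $G:\sfd_\ca\to\sfe$ is sent to $G\circ can:\ul\ca\to\sfe$, and a triangulated $2$-morphism to its restriction. This is well defined, since $can:\ul\ca\to\sfd_\ca$ is exact --- it carries every bicartesian square of the exact structure of $\ul\ca(\boxempty)$, as in \myref{eqn:square}, to a homotopically bicartesian square of $\sfd_\ca(\boxempty)$ --- and a triangulated functor preserves distinguished triangles, so $G\circ can$ is a $\partial$-morphism of towers; on $2$-morphisms there is nothing further to check, by Remark \ref{rmk:all}. It therefore suffices to prove that precomposition with $can$ is essentially surjective and fully faithful.

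Both parts rest on one structural fact: for each cube $C_n$, the triangulated category $\cd^b(\ul\ca(C_n))$ is the triangulated hull of $\ul\ca(C_n)$, every bounded complex being a finite iterated cone of shifts of its components, via its stupid truncations. For a \emph{bare} triangulated target this would not suffice --- the universal property fails for the bounded derived category of an exact category (see the Introduction) --- but the tower supplies the missing rigidity. Since $\D_\ca$, and hence $\sfd_\ca$, is a triangulated derivator, a morphism $g$ of $\cd^b(\ul\ca(C_n))$, together with its cone, is encoded by an object over the larger cube $C_n\times\Delta_1$, \resp by a homotopically cocartesian square over $C_n\times\boxempty$; and a morphism of towers comes equipped with coherent isomorphisms $\varphi_u$ intertwining its components with all the cube maps $(i^j_\varepsilon)^*$ and $(p^j)^*$. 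Therefore, given an exact morphism of towers $F:\ul\ca\to\sfe$, the sought triangulated extension $\tilde F:\sfd_\ca\to\sfe$ is built by a double induction: an outer induction on the dimension $n$ of the cube --- Keller's scheme, made possible by the identity $C_{n+1}=C_n\times\Delta_1$ --- and an inner induction on the number of cone-steps needed to reach an object from the subcategory $\ul\ca(C_n)$. On $\ul\ca(C_n)$ one is forced to put $\tilde F_n:=F_n$; each further cone-step is then dictated, up to coherent isomorphism, by the cube over which it is encoded, while the passage from dimension $n$ to dimension $n+1$ is organised by the recollement (``$6$-gluing'') of $\cd^b(\ul\ca(C_n\times\Delta_1))$ along the two vertices of the $\Delta_1$-factor and by the redundancy of the connecting morphism for low-dimensional diagrams.

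Full faithfulness follows from the same inductions. A triangulated $2$-morphism $\alpha:G\Rightarrow G'$ of tower morphisms that restricts to zero along $can$ vanishes on $\ul\ca(C_n)$ for every $n$, hence on every object of $\cd^b(\ul\ca(C_n))$, since such an object is the restriction, along a coordinate embedding $C_n\hookrightarrow C_n\times\boxempty$, of an object over the larger cube on which $\alpha$ already vanishes by the inductive hypothesis; and every exact $2$-morphism $G\circ can\Rightarrow G'\circ can$ extends, by defining its components on cones as the inductively unique restrictions of the corresponding $2$-morphisms over larger cubes. The decisive point --- and the reason the statement holds for \emph{all} towers of triangulated categories, with no Toda hypothesis, in contrast with the ``basic'' situation of Theorem \ref{thm:ext} --- is that the non-canonical fill-in of the third vertex of a morphism of distinguished triangles, which would obstruct uniqueness over a bare triangulated category, is here recovered as a restriction of data living over a larger cube and is thereby pinned down.

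The main difficulty is the coherence bookkeeping hidden in the two inductions: an object of $\cd^b(\ul\ca(C_n))$ admits many presentations as an iterated cone of objects of $\ul\ca(C_n)$, and one must verify that $\tilde F$ --- and each extended $2$-morphism --- is independent of all of them while remaining compatible with every cube map. This reduces to showing that any two such presentations are dominated by a common refinement over a still larger cube and that the resulting comparison isomorphisms cohere, and the existence and essential uniqueness of these refinements is precisely what the derivator axioms for $\sfd_\ca$ --- in particular the epivalence axiom {\bf Der 5} and the axiom {\bf Der 7} identifying homotopically cartesian and cocartesian squares --- are there to provide.
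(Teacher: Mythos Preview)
This theorem is not proved in the present paper: it is stated with attribution to \cite{Keller91} and used as a black box in the proof of Theorem~\ref{thm:main}. There is therefore no ``paper's own proof'' to compare against; the paper only remarks (in the Introduction) that Keller's argument proceeds by induction on the integer dimension of the cubes, a feature unavailable for general finite directed diagrams.

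Your sketch does identify that inductive scheme, and the broad shape --- extend $F$ to $\tilde F$ on $\sfd_\ca$ by induction on amplitude inside each $\cd^b(\ul\ca(C_n))$, coupled with induction on $n$ via $C_{n+1}=C_n\times\Delta_1$ --- is in line with what the paper says about Keller's method. However, several of the tools you invoke are anachronistic or circular in this context. You appeal to the fact that $\D_\ca$ is a triangulated derivator, to axioms {\bf Der~5} and {\bf Der~7}, and to the ``redundancy of the connecting morphism'' of Section~\ref{s:epirec}; but Keller's theorem is a statement purely about \emph{towers} (indexed only over $\Cubes$, which is not even a full $2$-subcategory of $\Dia_\f$), and in the logic of the present paper it is the \emph{input} from which the derivator-level results are derived, not the other way around. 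In particular, invoking the derivator $\D_\ca$ to organise the proof of the tower statement reverses the paper's dependency structure. Keller's original argument works entirely inside the tower formalism: the epivalence he needs is a property of the specific towers $\sfd_\ca$ (his ``epivalent towers''), not axiom {\bf Der~5} for a derivator, and the recollement/connecting-morphism analysis is carried out cube-by-cube rather than via the general machinery of Section~\ref{s:epirec}.

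If you want a self-contained sketch faithful to the tower setting, you should replace the appeals to derivator axioms by the corresponding concrete properties of the towers $\ul\ca$ and $\sfd_\ca$ over cubes, as Keller does in \cite{Keller91}.
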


Clearly, this is an important theorem. The aim of the present article
is to show that it also holds in the context of derivators.


\section{Epivalence and Recollement} \label{s:epirec}
In this section we want to show a very important property of
triangulated derivators and their morphisms. We will see that the
property of recollement, which is enjoyed by derivators by the
very definition, is important in this setting.

\begin{definition} \label{def:recollement}
A {\em recollement} of triangulated categories $\ct'$, $\ct$ and $\ct''$
is a diagram of triangulated functors
\[
\xymatrix{
\ct' \ar[rr]^{j_!} && \ct \ar[rr]^{i^*} \ar@/^1.5pc/[ll]^{j^*}
\ar@/_1.5pc/[ll]_{j^?} && \ct'' \ar@/^1.5pc/[ll]^{i_*}
\ar@/_1.5pc/[ll]_{i_!}
}
\]
such that
\begin{itemize}
\item the pairs $j^? \dashv j_!$, $j_! \dashv j^*$, $i_! \dashv i^*$,
$i^* \dashv i_*$ are adjunctions;
\item $i^*j_! = 0$ \, ;
\item the functors $i_*$, $i_!$ and $j_!$ are fully faithful;
\item for every object $X$ lying in $\ct$, there are distinguished
triangles
\[
\xymatrix{
i_!i^*X \ar[r]^<>(0.5){\varepsilon_i^X} & X \ar[r]^<>(0.5){\eta_j^X} &
j_!j^?X \ar[r]^{} & \Sigma i_!i^*X \ko
}
\]
\[
\xymatrix{
j_!j^*X \ar[r]^<>(0.5){\varepsilon_j^X} & X \ar[r]^<>(0.5){\eta_i^X} &
i_*i^*X \ar[r]^{} & \Sigma j_!j^*X \ko
}
\]
\end{itemize}
where ($\varepsilon_i^X$, $\eta_i^X$) and $(\varepsilon_j^X$, $\eta_j^X)$
are pairs of adjunction morphisms related to $i$ and $j$, respectively.

The functors $j^?$, $j_!$, $j^*$, $i_!$, $i^*$, $i_*$ are also known
as the {\em $6$-gluing functors}.
\end{definition}

Let us recall that in the context of derivators open / closed
inclusions of diagrams naturally give rise to recollements of
triangulated categories. This allows us constructing the shift
and loop autoequivalences in terms of the $6$-gluing functors
as in the following lemma that we state even if we are not
going to use it in what follows.

\begin{lemma} \label{lem:shift}
Let $\S$ be a triangulated derivator. Fix an arbitrary diagram
$I$ in $\Dia$, let $j : I \to I \times \Delta_1$
and $i : I \to I \times \Delta_1$ be the obvious open and closed
inclusions, respectively. Then, there are {\em canonical}
isomorphisms of functors
\[
j^?i_* \iso \Sigma_I \quad \ko \quad
\Omega_I = \Sigma_I^{-1} \iso i^!j_! \;\; .
\]
\end{lemma}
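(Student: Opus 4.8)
\emph{Plan.} Both isomorphisms should fall out of the recollement of Definition~\ref{def:recollement} attached to the complementary open and closed inclusions $j:I\cong I\times\{1\}\hookrightarrow I\times\Delta_1$ and $i:I\cong I\times\{0\}\hookrightarrow I\times\Delta_1$, combined with the standard description of the canonical shift and loop functors of Theorem~\ref{thm:Malts}: in a pointed derivator one has $\Sigma_IY\cong\mathsf{cof}(Y\to 0)$ and $\Omega_IY\cong\mathsf{fib}(0\to Y)$, where the (co)fibers are formed in $\S$ and $0$ is a zero object. Write $q:I\times\Delta_1\to I$ for the projection, so that $qi=qj=\mathrm{id}_I$. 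Throughout I will use that $i^*$ and $j^*$ are triangulated functors (Theorem~\ref{thm:Malts}), hence preserve (co)fibers, and that they jointly reflect isomorphisms (axiom \textbf{Der 2}, since every object of $I\times\Delta_1$ is in the image of $i$ or of $j$); so (co)fibers in $\S(I\times\Delta_1)$ may be read off layer by layer.

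The first step is to pin down three identifications. Since $i$ is fully faithful, $i^*i_*\cong\mathrm{id}$ and $i^*i_!\cong\mathrm{id}$; moreover axiom \textbf{Der 4} gives $j^*i_*=0$ (the comma category computing the $\{1\}$-layer of $i_*Y$ is empty, as $\Delta_1$ has no arrow $0\to1$), so $i_*$ is ``right extension by zero from the closed layer''; dually $j^*j_!\cong\mathrm{id}$ and $i^*j_!=0$. Finally, $i$ is right adjoint to $q$ in $\Dia_\f$, so passing to $\S$ the functor $q^*$ is left adjoint to $i^*$, whence $i_!\cong q^*$; in particular $i_!i^*i_*Y\cong i_!Y\cong q^*Y$, which restricts along both $i^*$ and $j^*$ to $Y$ (because $qi=qj=\mathrm{id}_I$).

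For $j^?i_*\cong\Sigma_I$ I would apply the first recollement triangle of Definition~\ref{def:recollement} to $X:=i_*Y$; using $i^*i_*Y\cong Y$ it reads
\[
q^*Y\ \xrightarrow{\theta}\ i_*Y\ \lra\ j_!j^?i_*Y\ \lra\ \Sigma\,q^*Y,
\]
so $j_!j^?i_*Y\cong\mathsf{cof}(\theta)$, where $\theta$ is the counit $i_!i^*i_*Y\to i_*Y$ transported along the above isomorphisms. By the triangle identities together with the full faithfulness of $i$, the map $i^*\theta$ is the identity of $Y$, so $i^*\mathsf{cof}(\theta)\cong\mathsf{cof}(\mathrm{id}_Y)=0$; while $j^*\theta$ is the canonical map $Y\cong j^*q^*Y\to j^*i_*Y=0$, so $j^*\mathsf{cof}(\theta)\cong\mathsf{cof}(Y\to 0)=\Sigma_IY$. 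As $i^*\mathsf{cof}(\theta)=0$, the first recollement triangle applied to $\mathsf{cof}(\theta)$ has vanishing outer terms, hence $\mathsf{cof}(\theta)\cong j_!j^?\mathsf{cof}(\theta)$; applying $j^*$ and $j^*j_!\cong\mathrm{id}$ gives $j^?\mathsf{cof}(\theta)\cong\Sigma_IY$, while applying $j^?$ and $j^?j_!\cong\mathrm{id}$ to $\mathsf{cof}(\theta)\cong j_!j^?i_*Y$ gives $j^?\mathsf{cof}(\theta)\cong j^?i_*Y$. Comparing, $j^?i_*Y\cong\Sigma_IY$, naturally in $Y$; since every map involved is a (co)unit or comes from the functorial recollement triangles, the isomorphism is canonical.

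For $i^!j_!\cong\Omega_I$ I would argue by adjunction rather than by a second recollement. The pairs $j^?\dashv j_!$ and $i^*\dashv i_*$, together with the right adjoint $i^!$ of $i_*$ furnished for the closed immersion $i$ by the axiom of Definition~\ref{def:pointed}, show that the composite $j^?\circ i_*$ admits $i^!\circ j_!$ as a right adjoint. But $j^?i_*\cong\Sigma_I$ is an autoequivalence of $\S(I)$ by Theorem~\ref{thm:Malts}, so its right adjoint is a quasi-inverse, namely $\Sigma_I^{-1}=\Omega_I$; therefore $i^!j_!\cong\Omega_I$, canonically, which is precisely the content of the equality $\Omega_I=\Sigma_I^{-1}$ in the statement. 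I expect the only genuinely delicate point to be the bookkeeping of the second paragraph — establishing $j^*i_*=0$ and $i_!\cong q^*$ with the right variances, and checking that $\theta$ restricts to $\mathrm{id}_Y$ on the closed layer and to $Y\to0$ on the open layer — after which the recollement triangles and the adjunction formalism do the rest.
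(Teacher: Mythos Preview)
Your proof is correct. For the first isomorphism, you and the paper both work from the recollement triangle $i_!i^*X \to X \to j_!j^?X \to \Sigma i_!i^*X$ specialized at $X=i_*Y$, using $i_!\cong q^*$ and $j^*i_*=0$. The paper takes the slightly more direct route of applying the triangulated functor $j^*$ to the triangle \emph{before} specializing (using $j^*i_!\cong j^*q^*\cong\mathrm{id}$ and $j^*j_!\cong\mathrm{id}$) to obtain a triangle $i^*X\to j^*X\to j^?X\to\Sigma_I i^*X$ in $\S(I)$, and then reads off $Y\to 0\to j^?i_*Y\to\Sigma_IY$; you instead compute the cofiber layerwise and run a second recollement step to recover $j^?$ from $j^*$. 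Same content, your path is a touch longer.

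The genuine difference is in the second isomorphism. The paper sketches the dual argument: apply $i^!$ to the other recollement triangle $\Sigma^{-1}i_*i^*X\to j_!j^*X\to X\to i_*i^*X$ and specialize $X=j_*Y$ (implicitly using $j_*=q^*$ and $i^!q^*=0$). You instead observe that $j^?i_*\dashv i^!j_!$ (from $j^?\dashv j_!$ and $i_*\dashv i^!$), so once $j^?i_*\cong\Sigma_I$ is known to be an autoequivalence, uniqueness of right adjoints forces $i^!j_!\cong\Sigma_I^{-1}=\Omega_I$. This is cleaner: it avoids repeating the layerwise computation and makes canonicity transparent, at the modest cost of invoking Theorem~\ref{thm:Malts} (for invertibility of $\Sigma_I$) rather than extracting $\Omega_I$ directly from a triangle.
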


\begin{proof}
It is well known (\eg, look at \cite[Section 9]{CisinskiNeeman08})
that an open inclusion $j$ with a closed inclusion $i$ such that
the diagram spanned by the union of their images is all of $I \times
\Delta_1$ give rise to a recollement of triangulated categories
\[
\xymatrix{
\S(I) \ar[rr]^{j_!} && \S(I \times \Delta_1) \ar[rr]^{i^*}
\ar@/^1.5pc/[ll]^{j^*} \ar@/_1.5pc/[ll]_{j^?} && \S(I)
\ar@/^1.5pc/[ll]^{i_*} \ar@/_1.5pc/[ll]_{i_!} .
}
\]
Thus, for any object $X$ in $\S(I \times \Delta_1)$ there is a
distinguished triangle
\[
\xymatrix{
i_!i^*X \ar[r]^<>(0.5){} & X \ar[r]^<>(0.5){} &
j_!j^?X \ar[r]^<>(0.5){} & \Sigma_{I \times \Delta_1} i_!i^*X .
}
\]
Here, by applying the functor $\Hom(-,j_!j^?X)$ and passing to long
exact sequence, we see that the connecting morphism
$j_!j^?X \to \Sigma i_!i^*X$ is {\em unique} because
$\Hom(\Sigma i_!i^*X , j_!j^?X) = 0$ by adjunction and by recollement.

Let us apply the triangulated functor $j^*$ to this triangle.
Because of the recollement axioms we get a distinguished
triangle in $\S(I)$
\[
\xymatrix{
i^*X \ar[r]^<>(0.5){} & j^*X \ar[r]^<>(0.5){} &
j^?X \ar[r]^<>(0.5){} & \Sigma_I i^*X .
}
\]
In particular, when the object $X$ is of the form $i_*Y$, for
some object $Y$ in $\S(I)$, the triangle becomes
\[
\xymatrix{
i^*i_*Y \ar[r]^<>(0.5){} & j^*i_*Y \ar[r]^<>(0.5){} &
j^?i_*Y \ar[r]^<>(0.5){} & \Sigma_I i^*i_*Y .
}
\]
Here $i^*i_*Y = Y$ because the functor $i_*$ is fully faithful
and $j^*i_*Y = 0$ by adjunction. It survives a functorial iso
$j^?i_*Y \iso \Sigma_I Y$, for all $Y$ in $\S(I)$. The
conclusion follows easily.

The analogous proof of the second isomorphism starts by applying
the triangulated functor $i^!$ to the canonical distinguished triangle
\[
\xymatrix{
\Sigma^{-1} i_*i^*X \ar[r]^{} & j_!j^*X
\ar[r]^<>(0.5){\varepsilon_j^X} &
X \ar[r]^<>(0.5){\eta_i^X} & i_*i^*X
}
\]
and taking $X = j_*Y$.
\end{proof}

If we are willing to study commutativity of derivator morphisms
with the $6$-gluing functors we need another concept.

\begin{definition} \label{def:weakly}
Let $\ca$ be an exact category and let $\cs$, $\ct$ be triangulated
categories. We say that
\begin{itemize}
\item[a)] an additive functor $F : \cs \to \ct$ is
{\em weakly triangulated} if, for each distinguished triangle
\[
\xymatrix{
X \ar[r]^u & Y \ar[r]^v & Z \ar[r]^w & \Sigma X
}
\]
of $\cs$, there is {\em some} distinguished triangle
\[
\xymatrix{
F(X) \ar[r]^{F(u)} & F(Y) \ar[r]^{F(v)} & F(Z) \ar[r]^{w'} & \Sigma F(X)
}
\]
of $\ct$;
\item[b)] an additive functor $F : \ca \to \cs$ is a {\em weak
$\partial$-functor} or {\em weakly exact} if, for each conflation
\[
\xymatrix{
X \ar@{ >->}[r]^u & Y \ar@{->>}[r]^v & Z
}
\]
of $\ca$, there is {\em some} distinguished triangle
\[
\xymatrix{
F(X) \ar[r]^{F(u)} & F(Y) \ar[r]^{F(v)} & F(Z) \ar[r]^{z} & \Sigma F(X)
}
\]
of $\cs$.
\end{itemize}
\end{definition}

Clearly, the notion of weakly triangulated functor is much weaker
than the related notion of triangulated functor, which is described
in \cite{KellerVossieck87} under the name of $S$-functor.
Nevertheless, we will see that (Proposition \ref{prop:redundancy}),
in order to extend an additive morphism of triangulated derivators
to a triangulated morphism, it is redundant to ask that all the
functors $F_I$ are triangulated. We shall see that this extension
is possible if all these functors are {\em weakly} triangulated.
Indeed, even less is required for this happens, it is enough to
ask that $F_{\Delta_1}$ and $F_\square$ are {\em weakly}
triangulated functors.

Let $\S$ and $\T$ be triangulated derivators and $\ul\ca$ a
represented exact derivator. Let $F : \S \to \T$ and
$F : \ul\ca \to \S$ be {\em additive} functors of derivators.
Let us fix an arbitrary diagram $I$ in $\Dia$. Let
$j : I \to I \times \Delta_1$ and $i : I \to I \times \Delta_1$
be the obvious open and closed inclusions, respectively. We
remark that in this situation the natural transformation
(\cf subsection \ref{ss:mor})
$\varphi^j : j_! F_I \to F_{I \times \Delta_1} j_!$ is invertible.
Indeed, since every point inclusion $e \to I \times \Delta_1$
either factors through $i$ or $j$, it follows that $i^*$ and $j^*$
detect isomorphisms and we can easily check that
$i^*(\varphi^{j,X}) = 0$ and $j^*(\varphi^{j,X})$ are isomorphisms,
for all $X$ in $\S(I \times \Delta_1)$. 

Thus, there is an induced natural transformation
\[
\psi_j : j^? F_{I \times \Delta_1} \to F_{I} j^? \ko
\]
whose action $\psi_j^X$ on an object $X$ is given by the composition
\[
\xymatrix{
j^? F_{I \times \Delta_1}X
\ar[rr]^<>(0.5){j^? F_{I \times \Delta_1}(\eta_j^{X})} &&
j^? F_{I \times \Delta_1}j_!j^?X
\ar[rr]^<>(0.5){j^?(\varphi^{j,j^?X})^{-1}}_\sim && j^?j_!F_I j^?X
\ar[rr]^<>(0.5){\varepsilon_j^{F_I j^?X}}_\sim && F_I j^?X .
}
\]
Also, there is an induced natural transformation
\[
\psi_{jj} : j_!j^? F_{I \times \Delta_1}
\to F_{I \times \Delta_1} j_!j^? \;\; \ko
\]
whose action on every $X$ in $\S(I \times \Delta_1)$ is defined
by the composition
\[
\xymatrix{
j_!j^? F_{I \times \Delta_1} X \ar[rr]^<>(0.5){j_!(\psi_j^X)}
&& j_! F_I j^? X \ar[rr]^<>(0.5){\varphi^{j,j^?X}}_\sim &&
F_{I \times \Delta_1} j_!j^? X .
}
\]
\begin{lemma} \label{lem:diagramtimesarrow}
In the situation just described for the {\em additive} morphism
$F : \S \to \T$ of triangulated derivators, suppose that the
functor $F_{I \times \Delta_1}$ is {\em weakly} triangulated.
Then, for any $X$ in $\S(I \times \Delta_1)$, there is a
{\em unique} isomorphism
\[
\psi_{jj}^X := \varphi^{j,j^?X} \circ
\psi_j^X : j_!j^? F_{I \times \Delta_1}X
\iso F_{I \times \Delta_1} j_!j^? X
\]
such that the relation
$\psi_{jj}^X \circ \eta_j^{F_{I \times \Delta_1}X} =
F_{I \times \Delta_1} (\eta_j^X)$
holds. In particular, the $2$-morphism of functors
\[
\psi_j^X : j^? F_{I \times \Delta_1}X \to F_{I} j^?X
\]
is invertible, for all $X$ in $\S(I \times \Delta_1)$.
More generally, $\psi_{jj} : j_!j^? F_{I \times \Delta_1}
\iso F_{I \times \Delta_1} j_!j^?$ and
$\psi_j : j^? F_{I \times \Delta_1} \iso F_{I} j^?$ are
$2$-isomorphisms of functors.
\end{lemma}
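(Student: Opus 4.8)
The strategy is to reduce the statement to a standard uniqueness argument for morphisms of distinguished triangles inside the target triangulated category $\T(I\times\Delta_1)$, exploiting the recollement of triangulated categories attached to the open/closed decomposition $I\times\Delta_1 = j(I)\cup i(I)$. First I would recall, as in the proof of Lemma \ref{lem:shift}, that $(j_!,j^*,j^?,i_!,i^*,i_*)$ form a recollement, so that for every $X$ in $\S(I\times\Delta_1)$ there is a distinguished triangle
\[
\xymatrix{
i_!i^*X \ar[r]^<>(0.5){\varepsilon_i^X} & X \ar[r]^<>(0.5){\eta_j^X} &
j_!j^?X \ar[r]^{} & \Sigma\, i_!i^*X
}
\]
in $\S(I\times\Delta_1)$, and similarly a triangle with $i^*X$ replaced appropriately in the target. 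The morphism $\varphi^{j,j^?X}: j_!F_Ij^?X \iso F_{I\times\Delta_1}j_!j^?X$ is an isomorphism by the discussion preceding the lemma (the point inclusions into $I\times\Delta_1$ factor through $i$ or $j$, whence $i^*,j^*$ are jointly conservative and $i^*\varphi^{j}=0$, $j^*\varphi^{j}$ an iso). Likewise $\varphi^i : i_!F_I \iso F_{I\times\Delta_1}i_!$ is an isomorphism since $i$ is also ``open on the nose'' after the obvious identification, or more simply because $i_!$ agrees with $i_*$ up to the recollement data and $F$ is additive; in any case the relevant comparison maps $\varphi^i$ and $\varphi^j$ are invertible by {\bf Der 2} applied pointwise.

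Next I would build the candidate isomorphism. Since $F_{I\times\Delta_1}$ is weakly triangulated, applying it to the recollement triangle above yields \emph{some} distinguished triangle
\[
\xymatrix{
F_{I\times\Delta_1}i_!i^*X \ar[r] & F_{I\times\Delta_1}X
\ar[r]^<>(0.5){F_{I\times\Delta_1}(\eta_j^X)} & F_{I\times\Delta_1}j_!j^?X \ar[r] & \Sigma F_{I\times\Delta_1}i_!i^*X
}
\]
in $\T(I\times\Delta_1)$. On the other hand, applying the recollement of $\T$ to $F_{I\times\Delta_1}X$ and using $\varphi^i$, $\varphi^j$ to identify the two outer terms with $F_{I\times\Delta_1}i_!i^*X$ and $F_{I\times\Delta_1}j_!j^?X$ respectively, gives a second distinguished triangle on the \emph{same} three objects. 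The map $\psi_{jj}^X := \varphi^{j,j^?X}\circ\psi_j^X$ (unwinding the definition of $\psi_j^X$) is by construction the unique fill-in making the middle square between the ``image triangle'' and the ``recollement triangle'' commute together with the first vertical map; here one uses that the first vertical map $F_{I\times\Delta_1}i_!i^*X \to F_{I\times\Delta_1}i_!i^*X$ can be taken to be the identity, which forces $\psi_{jj}^X$ to be an isomorphism by the $2$-out-of-$3$ property for morphisms of triangles. The identity $\psi_{jj}^X\circ\eta_j^{F_{I\times\Delta_1}X} = F_{I\times\Delta_1}(\eta_j^X)$ is then exactly the commutativity of that middle square, and uniqueness follows because $\eta_j^{F_{I\times\Delta_1}X}$ is the universal morphism out of $F_{I\times\Delta_1}X$ killing $i^*$ (equivalently: $\Hom(F_{I\times\Delta_1}i_!i^*X,\; F_{I\times\Delta_1}j_!j^?X)=0$ by the recollement axioms of $\T$, so any two fill-ins over $F_{I\times\Delta_1}(\eta_j^X)$ agree).

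From the invertibility of $\psi_{jj}^X$ I would then deduce invertibility of $\psi_j^X$: since $\varphi^{j,j^?X}$ is already an isomorphism and $j_!$ is fully faithful (a recollement axiom), the relation $\psi_{jj}^X = \varphi^{j,j^?X}\circ j_!(\psi_j^X)$ — which is how $\psi_{jj}$ was defined in the text — shows $j_!(\psi_j^X)$ is an isomorphism, hence $\psi_j^X$ is an isomorphism because $j_!$ is fully faithful and conservative on its essential image; alternatively apply $j^?j_!\simeq\id$. Naturality of both $\psi_j$ and $\psi_{jj}$ in $X$ is automatic: each is a composition of natural transformations ($\eta_j$, the mate isomorphisms $\varphi^j$, and the counit $\varepsilon_j$), so no separate verification is needed beyond observing that the pointwise isomorphisms assemble into a $2$-isomorphism of functors. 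The main obstacle I anticipate is purely bookkeeping: one must check carefully that the fill-in produced by the weak-triangulation hypothesis really \emph{is} the map $\psi_{jj}^X$ defined via $\eta_j$, $\varphi^j$, $\varepsilon_j$ — i.e.\ that the diagram chase identifying the two triangles is compatible with the explicit composite in the statement — and that the vanishing $\Hom(i_!i^*(-),j_!j^?(-))=0$ needed for uniqueness transports correctly through $F_{I\times\Delta_1}$ (it does, since we apply it in $\T$, not in $\S$). Once that identification is made, everything else is formal.
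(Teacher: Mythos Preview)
Your approach is essentially the same as the paper's: compare the recollement triangle of $F_{I\times\Delta_1}X$ in $\T(I\times\Delta_1)$ with the image under the weakly triangulated $F_{I\times\Delta_1}$ of the recollement triangle of $X$, obtain a unique fill-in using the vanishing $\Hom(\Sigma\, i_!i^*(-),j_!j^?(-))=0$, and identify this fill-in with $\psi_{jj}^X$.

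There is, however, one genuine gap in your argument: the invertibility of the comparison map $\varphi_{ii}^X : i_!i^*F_{I\times\Delta_1}X \to F_{I\times\Delta_1}i_!i^*X$ (equivalently, of $\varphi^{i}$). Your suggested justifications are not correct: $i$ is the \emph{closed} inclusion, not open, and for a closed immersion $i_!$ and $i_*$ do not agree in general. The paper's argument here is the key observation you are missing: since the projection $p:I\times\Delta_1\to I$ satisfies $j\dashv p\dashv i$, one gets $j^*\dashv p^*\dashv i^*$, whence $i_! = p^*$. Therefore $\varphi_{ii}^X$ is (up to the invertible $\varphi_i$) the structure isomorphism $\varphi_{i\circ p}$ attached to the \emph{restriction} functor $(i\circ p)^*$, and those are invertible by the very definition of a morphism of prederivators. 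Without this, neither the $2$-out-of-$3$ step nor the uniqueness step (which transports the Hom-vanishing through $\varphi_{ii}$) is justified. Your ``Der~2 pointwise'' suggestion can be made to work, but only after one knows $j^*i_! \cong \id$, which again comes down to $i_! = p^*$.

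The bookkeeping obstacle you flag at the end---verifying that the abstract fill-in really equals the explicitly defined composite $\psi_{jj}^X$---is indeed the remaining substantive step, and the paper carries it out with a concrete diagram chase showing $\psi_{jj}^X \circ \eta_j^{F_{I\times\Delta_1}X} = F_{I\times\Delta_1}(\eta_j^X)$; you correctly identify this as the crux but should not leave it as a remark.
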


\begin{proof}
For any object $X$ in $\S(I \times \Delta_1)$, let us start by
considering the canonical distinguished triangle that we have
just seen in the proof of Lemma \ref{lem:shift}
\[
\xymatrix{
i_!i^*X \ar[r]^<>(0.5){\varepsilon_i^X} & X \ar[r]^<>(0.5){\eta_j^X} &
j_!j^?X \ar[r]^<>(0.5){\partial^X} & \Sigma i_!i^*X \ko
}
\]
whose connecting morphism $\partial^X : j_!j^*X \to \Sigma i_!i^*X$
is {\em unique} because $\Hom(\Sigma i_!i^*X , j_!j^*X) = 0$ by
adjunction and by recollement.

Now apply the {\em weakly} triangulated functor $F_{I \times \Delta_1}$
and get a distinguished triangle in $\T(I \times \Delta_1)$
\[
\xymatrix{
F_{I \times \Delta_1} i_!i^*X \ar[r]^<>(0.5){F(\varepsilon)} &
F_{I \times \Delta_1} X \ar[r]^<>(0.5){F(\eta)} &
F_{I \times \Delta_1} j_!j^?X \ar[r]^<>(0.5){w} &
\Sigma F_{I \times \Delta_1} i_!i^*X \ko
}
\]
where we shortly write $F(\varepsilon)$ for
$F_{I \times \Delta_1}(\varepsilon_i^X)$, $F(\eta)$ for
$F_{I \times \Delta_1}(\eta_j^X)$. Recall that here the
morphism $w : F_{I \times \Delta_1} j_!j^?X \to
\Sigma F_{I \times \Delta_1} i_!i^*X$ just exists, \ie, it
is not unique, nor canonically constructed.

Let us consider the object $F_{I \times \Delta_1} X$ and write the
related distinguished triangle
\[
\xymatrix{
i_!i^* F_{I \times \Delta_1} X \ar[r]^<>(0.5){\varepsilon^F} &
F_{I \times \Delta_1} X \ar[r]^<>(0.5){\eta^F} &
j_!j^? F_{I \times \Delta_1} X \ar[r]^<>(0.5){\partial^F} &
\Sigma i_!i^* F_{I \times \Delta_1} X .
}
\]
Here we identify $\varepsilon^F = \varepsilon_i^{F_{I \times \Delta_1} X}$,
$\eta^F = \eta_j^{F_{I \times \Delta_1} X}$ and
$\partial^F = \partial^{F_{I \times \Delta_1} X}$ for the {\em unique}
connecting morphism. Since there is a natural transformation (\cf
subsection \ref{ss:mor})
\[
\varphi_{ii} \; := \; (\varphi^i i^*) \circ i_!(\varphi_i^{-1}) \; : \;
i_!i^* F_{I \times \Delta_1} \iso
i_! F_I i^* \to F_{I \times \Delta_1} i_!i^* \ko
\]
for every object $X$ in $\S(I \times \Delta_1)$ there are a morphism
$\varphi_{ii}^X = ((\varphi^i i^*) \circ i_!(\varphi_i^{-1}))^X$
and an induced morphism of distinguished triangles of
$\T(I \times \Delta_1)$
\[
\xymatrix{
i_!i^* F_{I \times \Delta_1} X \ar[d]^{\varphi_{ii}^X}
\ar[r]^<>(0.5){\varepsilon^F} & F_{I \times \Delta_1} X \ar@{=}[d]^\id
\ar[r]^<>(0.5){\eta^F} & j_!j^? F_{I \times \Delta_1} X \ar@{-->}[d]^\psi
\ar[r]^<>(0.5){\partial^F} & \Sigma i_!i^* F_{I \times \Delta_1} X
\ar[d]^{\Sigma\varphi_{ii}^X} \\
F_{I \times \Delta_1} i_!i^* X \ar[r]^<>(0.5){F(\varepsilon)} &
F_{I \times \Delta_1} X \ar[r]^<>(0.5){F(\eta)} &
F_{I \times \Delta_1} j_!j^? X \ar[r]^<>(0.5){w} &
\Sigma F_{I \times \Delta_1} i_!i^* X .
}
\]
Indeed, let us check commutativity of the square on the left.
We have
\begin{eqnarray*}
F_{I \times \Delta_1}(\varepsilon_i^X) \circ \varphi_{ii}^X
& = &
F_{I \times \Delta_1}(\varepsilon_i^X) \circ
\varepsilon_i^{F_{I \times \Delta_1}i_!i^*X} \circ
i_!(\varphi_i^{i_!i^*X}) \circ i_!F_I(\eta_i^{i^*X}) \circ
i_![(\varphi_i^X)^{-1}] \\
& = &
\varepsilon_i^{F_{I \times \Delta_1}X} \circ
i_!i^*F_{I \times \Delta_1}(\varepsilon_i^X) \circ
i_!(\varphi_i^{i_!i^*X}) \circ i_!F_I(\eta_i^{i^*X}) \circ
i_![(\varphi_i^X)^{-1}] \\
& = &
\varepsilon_i^{F_{I \times \Delta_1}X} \circ i_!(\varphi_i^X) \circ
i_!F_Ii^*(\varepsilon_i^X) \circ i_!F_I(\eta_i^{i^*X}) \circ
i_![(\varphi_i^X)^{-1}] \\
& = &
\varepsilon_i^{F_{I \times \Delta_1}X} \circ i_!(\varphi_i^X) \circ
i_![(\varphi_i^X)^{-1}] \\
& = &
\varepsilon_i^{F_{I \times \Delta_1}X} .
\end{eqnarray*}
Here, every canonical isomorphism comes from functoriality,
as $\varphi_i$ is a $2$-isomorphism of functors, and from the
relation $i^*(\varepsilon_i^X) \circ \eta_i^{i^*X} = \id^{i^*X}$.

Let us remark the useful fact that, if we call $p$ the obvious
projection functor $I \times \Delta_1 \to I$, then there are the
adjunction morphisms $j \dashv p \dashv i$. This fact implies
that $j^* \dashv p^* \dashv i^*$ also are adjunction morphisms.
In particular, we get the equality $i_! = p^*$, which entails
that the canonical morphisms
$\varphi := \varphi_{ii}^X = \varphi_{i \circ p}^X :
(i \circ p)^* F_{I \times \Delta_1}X \to
F_{I \times \Delta_1} (i \circ p)^* X$
and $\Sigma\varphi$ are invertible.

Thus, we have shown that $\psi$ is an isomorphism.
Moreover, it is unique because we can see that the group
\begin{eqnarray*}
\Hom(\Sigma F_{I \times \Delta_1} i_!i^* X ,
j_!j^? F_{I \times \Delta_1} X)
& = &
\Hom(\Sigma i_!i^* F_{I \times \Delta_1} X ,
j_!j^? F_{I \times \Delta_1} X) \\
& = &
\Hom(i_!i^* \Sigma F_{I \times \Delta_1} X ,
j_!j^? F_{I \times \Delta_1} X) \\
& = &
\Hom(i^* \Sigma F_{I \times \Delta_1} X ,
i^*j_!j^? F_{I \times \Delta_1} X) \\
& = &
0
\end{eqnarray*}
is canonically trivial via the canonical isomorphism $\varphi$,
adjunction and the canonical isomorphism $i^*j_! = 0$, thanks
to the recollement axioms.

Therefore, if we are able to show that the morphism $\psi_{jj}^X$
makes the central square of the diagram commutative too, we
get $\psi = \psi_{jj}^X$ canonically and the statement follows.
Indeed, there is a commutative diagram, by functoriality of $\eta_j$,
\[
\xymatrix{
j_!j^? F_{I \times \Delta_1} j_!j^? X &&
j_!j^?j_!F_{I} j^? X
\ar[ll]_<>(0.5){j_!j^?(\varphi^{j,j^?X})}^\sim \\
F_{I \times \Delta_1} j_!j^? X
\ar[u]^<>(0.5){\eta_{j}^{F_{I \times \Delta_1} j_!j^? X}} &&
j_! F_{I} j^? X \ar[ll]^<>(0.5){\varphi^{j,j^?X}}
\ar[u]_<>(0.5){\eta_j^{j_! F_{I} j^? X}}^\sim
\ar[ll]^<>(0.5){\varphi^{j,j^?X}}_\sim .
}
\]
We already know that the three arrows labelled with the
symbol $\sim$ are isomorphisms. Therefore, the arrows
$\eta_{j}^{F_{I \times \Delta_1} j_!j^? X}$ and
$j_!(\eps_j^{F_Ij^?X}) : j_!j^?j_!F_{I} j^? X \to j_! F_{I} j^? X$
are invertible, too. In particular, we get that the composition
$\varphi^{j,j^?X} \circ j_!(\eps_j^{F_Ij^?X}) \circ
j_!j^?(\varphi^{j,j^?X})^{-1}$ gives us an inverse to
$\eta_{j}^{F_{I \times \Delta_1} j_!j^? X}$. Now, since the
relation $j_!j^?F_{I \times \Delta_1}(\eta_j^X) \circ
\eta_j^{F_{I \times \Delta_1}X} =
\eta_j^{F_{I \times \Delta_1} j_!j^? X} \circ
F_{I \times \Delta_1}(\eta_j^X)$
holds, it follows that
$\psi_{jj}^X \circ \eta_j^{F_{I \times \Delta_1}X} =
F_{I \times \Delta_1}(\eta_j^X)$
also holds.

Since we have shown that $\psi_{jj}^X$ is invertible, we get
that the morphism $j_!(\psi_j^X)$ must be invertible, too.
Now apply $j^*$ to see that $\psi_j^X$ is an isomorphism.
The claim follows.
\end{proof}

We need a similar lemma for morphisms defined over {\em exact}
derivators.

\begin{lemma} \label{lem:exdiagramtimesarrow}
Let $\ul\ca$ be the prederivator represented by an exact
category $\ca$ and let $\S$ be a triangulated derivator. Let
$F : \ul\ca \to \S$ be an {\em additive} morphism of derivators
such that the functor $F_{I \times \Delta_1}$ is {\em weakly}
exact. Consider an arbitrary object $X$ in
$\ul\ca(I \times \Delta_1)$ such that the `vertical parallel arrows'
of his diagram are deflations.
More precisely, this means that $i_k^*i^*X \to i_k^*j^*X$ is a
deflation, for all $k \in I$ (here, $i$ and $j$ are as in lemma
\ref{lem:diagramtimesarrow} and $i_k : e \to I$ is the obvious map).

Then, there is a {\em unique} isomorphism
\[
\psi_{jj}^X := \varphi^{j,j^?X} \circ
\psi_j^X : j_!j^? F_{I \times \Delta_1}X
\iso F_{I \times \Delta_1} j_!j^? X
\]
such that the relation
$\psi_{jj}^X \circ \eta_j^{F_{I \times \Delta_1}X} =
F_{I \times \Delta_1} (\eta_j^X)$
holds. In particular, the natural transformation of functors
\[
\psi_j^X : j^? F_{I \times \Delta_1}X \to F_{I} j^?X
\]
is invertible. More generally, when restricted on the
subcategory of squares as above,
$\psi_{jj} : j_!j^? F_{I \times \Delta_1} \iso
F_{I \times \Delta_1} j_!j^?$ and $\psi_j :
j^? F_{I \times \Delta_1} \iso F_{I} j^?$ are $2$-isomorphisms
of functors.
\end{lemma}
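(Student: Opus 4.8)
The plan is to transcribe the proof of Lemma \ref{lem:diagramtimesarrow} almost verbatim; the only genuinely new point is that the source is now the exact prederivator $\ul\ca$ rather than a triangulated derivator, so the canonical recollement triangle of $X$ used there is no longer available and must be replaced by an honest conflation of the exact category $\ul\ca(I \times \Delta_1) = \ul\Hom((I\times\Delta_1)^\circ, \ca)$. This replacement is precisely where the deflation hypothesis on the vertical arrows of $X$ enters.

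First I would record that, since $i_! = p^*$ for the projection $p : I\times\Delta_1 \to I$, the object $i_!i^*X$ is constant in the $\Delta_1$-direction, and the counit $\varepsilon_i^X : i_!i^*X \to X$ is the identity over the closed copy of $I$ while over the open copy it is the family of vertical arrows of $X$. Under the hypothesis that these vertical arrows are deflations, the corresponding componentwise cokernels exist in $\ca$ and assemble into a presheaf which one identifies with $j^?X$; in particular $j^?X$, a priori only a partially defined left adjoint in the represented prederivator $\ul\ca$, does exist for the $X$ under consideration. The sequence
\[
i_!i^*X \xrightarrow{\varepsilon_i^X} X \xrightarrow{\eta_j^X} j_!j^?X
\]
is then split exact over the closed copy and exact over the open copy, hence a conflation of $\ul\ca(I\times\Delta_1)$.

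Next I would apply the weakly exact functor $F_{I\times\Delta_1}$ to this conflation, obtaining a distinguished triangle
\[
F_{I\times\Delta_1}i_!i^*X \xrightarrow{F(\varepsilon)} F_{I\times\Delta_1}X \xrightarrow{F(\eta)} F_{I\times\Delta_1}j_!j^?X \xrightarrow{w} \Sigma F_{I\times\Delta_1}i_!i^*X
\]
in $\S(I\times\Delta_1)$, with $w$ merely existing — the exact analogue of the triangle that opened the proof of Lemma \ref{lem:diagramtimesarrow}. From here the remaining steps are word for word those of that proof. One compares this triangle with the canonical recollement triangle of $F_{I\times\Delta_1}X$ (as produced in the proof of Lemma \ref{lem:shift}), using on the closed vertex the morphism $\varphi_{ii}^X = \varphi_{i\circ p}^X$, invertible because $i_! = p^*$ and $\varphi_u$ is always invertible for a morphism of derivators, and the identity in the middle; filling in the morphism of triangles produces an isomorphism $\psi$ on the open vertex, the other two comparison maps being isomorphisms. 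One checks that $\psi$ is the only such morphism, using that $\Hom(\Sigma i_!i^*F_{I\times\Delta_1}X, j_!j^?F_{I\times\Delta_1}X) = 0$ via $i^*j_! = 0$ and adjunction; and one identifies $\psi$ with $\psi_{jj}^X$ through the relation $\psi_{jj}^X \circ \eta_j^{F_{I\times\Delta_1}X} = F_{I\times\Delta_1}(\eta_j^X)$, verified from naturality of $\eta_j$ exactly as before. Finally, $j_!(\psi_j^X) = \psi_{jj}^X$ being invertible, applying $j^*$ and using $j^*j_! \cong \id$ shows $\psi_j^X$ is invertible; naturality of $\psi_j$ and $\psi_{jj}$ on the full subcategory of objects of $\ul\ca(I\times\Delta_1)$ with deflation vertical arrows is then formal.

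The main obstacle is the first step: proving that the candidate sequence really is a conflation of $\ul\ca(I\times\Delta_1)$, i.e., that the deflation hypothesis does force the relevant cokernels to exist in $\ca$ and to be computed by $j^?$, and that exactness holds at every vertex lying over both copies of $I$. Everything downstream is a verbatim translation of the proof of Lemma \ref{lem:diagramtimesarrow}, with ``weakly triangulated'' replaced by ``weakly exact'' and the recollement triangle of $X$ replaced by the conflation above.
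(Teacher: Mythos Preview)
Your overall plan is right, and the second half of your sketch is exactly what the paper does. The gap is precisely where you flagged it: the claim that
\[
i_!i^*X \xrightarrow{\ \varepsilon_i^X\ } X \xrightarrow{\ \eta_j^X\ } j_!j^?X
\]
is a conflation in $\ul\ca(I\times\Delta_1)$. As you yourself observe, $\varepsilon_i^X$ is the identity over the closed copy of $I$ and equals the vertical arrows over the open copy; since those vertical arrows are \emph{deflations} by hypothesis, $\varepsilon_i^X$ is componentwise a deflation, not an inflation. In the represented exact prederivator the left adjoint $j^?$ computes the cokernel of the vertical arrow, so $j^?X = 0$ and $j_!j^?X = 0$ for the objects $X$ under consideration. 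Your candidate sequence therefore collapses to $i_!i^*X \to X \to 0$; over the open copy it reads $i^*X \twoheadrightarrow j^*X \to 0$, which is short exact only when the deflation is an isomorphism. So no such conflation is available, and you cannot feed it to the weakly exact functor $F_{I\times\Delta_1}$.

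The paper's fix is to take the conflation on the other side of $\varepsilon_i^X$: since $\varepsilon_i^X$ is a deflation, one forms
\[
\ker(\varepsilon_i^X)\ \rightarrowtail\ i_!i^*X\ \twoheadrightarrow\ X
\]
in $\ul\ca(I\times\Delta_1)$, applies the weakly exact functor $F_{I\times\Delta_1}$, and then \emph{rotates} the resulting distinguished triangle once to obtain
\[
F_{I\times\Delta_1}(i_!i^*X) \longrightarrow F_{I\times\Delta_1}X \longrightarrow \Sigma\, F_{I\times\Delta_1}\bigl(\ker\varepsilon_i^X\bigr) \longrightarrow \Sigma\, F_{I\times\Delta_1}(i_!i^*X).
\]
The third term plays the role that $F_{I\times\Delta_1}(j_!j^?X)$ plays in Lemma~\ref{lem:diagramtimesarrow}, via the identification of $\ker(\varepsilon_i^X)$ with the loop object $\Omega\, j_!j^?X$ coming from the recollement triangle of $can(X)$ in $\D_\ca(I\times\Delta_1)$. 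From this point on your sketch is correct verbatim: one compares with the recollement triangle of $F_{I\times\Delta_1}X$ in $\S(I\times\Delta_1)$ exactly as in the proof of Lemma~\ref{lem:diagramtimesarrow}.
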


\begin{proof}
The basic category $\ca = \ul\ca(e)$ is additive and has a
zero object $0$. By \cite[Cor. 3.8]{Groth11} our prederivator
$\ul\ca$ is {\em pointed} according to our definition
\ref{def:pointed}, \ie, the adjoint (exact) functors
$i^!$ and $i^?$ of axiom {\bf Der 6} exist. Notice that
it is enough that a prederivator is pointed in order to define
the suspension and the loop endofunctors $\Sigma_I$ and
$\Omega_I$, for all the diagrams $I$ in $\Dia$ (\cf \cite{Groth11}).

Let us consider an arbitrary object $X$ of $\ul\ca(I \times \Delta_1)$
with the required property. The adjunction morphism
\[
\xymatrix{
i_!i^*X \ar[r]^<>(0.5){\eps_i^X} & X
}
\]
is a deflation. Indeed, it is clear that
$i_k^*i^*(i_!i^*X) = i_k^*i^*X$, for all $k \in I$. Moreover,
we can locally check that
\[
i_k^*j^*(i_!i^*X) = i_k^*j^*p^*i^*X =
(i \; p \; j \; i_k)^*X = i_k^*i^*X \ko
\]
for all $k \in I$. This means that the diagram of the object
$i_!i^*X$ contains two identical horizontal subdiagrams, linked
by vertical parallel identities. It follows that the arrow
$i_k^*i^*(\eps_i^X)$ is an identity, for all $k \in I$, and
that $i_k^*j^*(\eps_i^X)$ is the original deflation
$i_k^*i^*X \to i_k^*j^*X$, for all $k \in I$.

Since the morphism $\eps_i^X$ is a deflation, it must fit
into a conflation
\[
\xymatrix{
\kernel(\eps_i^X) \ar@{ >->}[r] & i_!i^*X \ar@{->>}[r] & X \ko
}
\]
whose image in $\D_\ca(I \times \Delta_1)$ under the exact
functor $can_{I \times \Delta_1}$ we already know to fit into
a {\em canonical} distinguished triangle
(\cf lemma \ref{lem:diagramtimesarrow})
\[
\xymatrix{
\Sigma^{-1}j_!j^?X \ar[r] & i_!i^*X \ar[r] & X \ar[r] & j_!j^?X \; .
}
\]
Uniqueness allows us canonically identifying the object
$\kernel(\eps_i^X)$ with the loop object $\Omega j_!j^*X$
(\cf def. 3.19 in \cite{Groth11}).

Let us apply the {\em weakly} exact functor
$F_{I \times \Delta_1}$ and shift to get a distinguished
triangle in $\S(I \times \Delta_1)$
\[
\xymatrix{
F_{I \times \Delta_1} i_!i^*X \ar[r]^<>(0.5){} &
F_{I \times \Delta_1} X \ar[r]^<>(0.5){} &
F_{I \times \Delta_1} j_!j^?X \ar[r]^<>(0.5){} &
\Sigma F_{I \times \Delta_1} i_!i^*X \; .
}
\]
At this point we have reached the triangulated world and the
proof goes on as in the proof of lemma \ref{lem:diagramtimesarrow}.
\end {proof}

\subsection{The redundancy of the connecting morphism}
\label{ss:redundancy}
Let us start by showing that our definition in item a) (\resp, b)),
in \ref{def:extria} of exact (\resp, triangulated), morphism is the
correct one since the involved (pre)derivators enjoy the property
in the `epivalence axiom', \ie, Axiom {\bf Der 5}. We intend to
show that, if $F : \S \to \T$ is a triangulated morphism of
triangulated derivators, then there are autoequivalences $\Sigma_I^\S$
and $\Sigma_I^\T$ at each diagram $I$ such that, canonically, the
induced functor $F_I : \S(I) \to \T(I)$ is a triangulated functor,
for all diagrams $I$. This means that there is a canonical
$2$-isomorphism of triangulated functors
$\delta_I : F_I \Sigma_I^\S \iso \Sigma_I^\T F_I$, for each
diagram $I$. Altogether, all these morphisms define a canonical
$2$-isomorphism $\delta : F \Sigma \iso \Sigma F$.

\begin{proposition} \label{prop:rightdef}
Let $\ul\ca$ be the represented exact prederivator associated to an
exact category $\ca$. Let $\S$ and $\T$ be triangulated derivators.
Let us suppose that all these derivators be of some type $\Dia$.
\begin{itemize}
\item[a)] Let $F : \ul\ca \to \S$ be a $\partial$-morphism according
to Def.~\ref{def:extria}, item a). Then, for all diagram $I$ in $\Dia$
the local functor $F_I : \ul\ca(I) \to \S(I)$ has a canonical structure
of $\partial$-functor with respect to the exact structure of $\ul\ca(I)$
and to the canonical triangulated structure of $\T(I)$.
\item[b)] Let $F : \S \to \T$ be a triangulated morphism according
to Def.~\ref{def:extria}, item b). Then, for all diagram $I$ in $\Dia$
the local functor $F_I : \S(I) \to \T(I)$ has a canonical structure
of triangulated functor with respect to the canonical triangulated
structures of the categories $\S(I)$ and $\T(I)$.
\end{itemize}
\end{proposition}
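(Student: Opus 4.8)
The plan is to deduce both statements from the behaviour of $F$ on the single square $\boxempty$, exploiting that (co)cartesian objects are detected fibrewise and that, by the construction underlying Theorem \ref{thm:Malts} — which is the point where axiom \textbf{Der 5} intervenes — the canonical structures on the values $\S(I)$, $\T(I)$ are built entirely out of bicartesian squares at the level $I\times\boxempty$ with a zero corner. Throughout, $F$ denotes the relevant morphism and $\varphi_u$ its structure isomorphisms.

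\medskip
\noindent\emph{Step 1: $F$ preserves bicartesian squares at every level.} I would first show that for every $I$ the functor $F_{I\times\boxempty}$ sends bicartesian squares to bicartesian squares: in case b), if $\cx\in\S(I\times\boxempty)$ is homotopically bicartesian then so is $F_{I\times\boxempty}(\cx)$ in $\T(I\times\boxempty)$; in case a), if $\cx\in\ul\ca(I\times\boxempty)$ is argumentwise a bicartesian square of the exact structure then $F_{I\times\boxempty}(\cx)$ is homotopically bicartesian in $\S(I\times\boxempty)$. For this one uses the fibrewise characterisation of (co)cartesian objects (\cf \cite{Groth11}): an object of $\D(I\times\boxempty)$ is cocartesian iff its restriction along $\{k\}\times\boxempty\hookrightarrow I\times\boxempty$ is cocartesian in $\D(\boxempty)$ for every $k\in I$, which for the source $\ul\ca$ is in case a) literally the definition of ``argumentwise bicartesian of the structure''. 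The isomorphisms $\varphi_u$ identify that restriction of $F_{I\times\boxempty}(\cx)$ with $F_{\boxempty}$ applied to the corresponding restriction of $\cx$, which is bicartesian by hypothesis on $F_{\boxempty}$ (Definition \ref{def:extria}, item a) \resp\ item b)); now \textbf{Der 2} concludes.

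\medskip
\noindent\emph{Step 2: local preservation of triangles and conflations; the suspension comparison.} By Step 1 and additivity of $F$ (which carries a zero corner to a zero corner, its fibres being $F_e(0)\cong0$), $F_{I\times\boxempty}$ sends a bicartesian square with zero corner to one of the same shape; since every distinguished triangle of $\S(I)$ — \resp\ the triangle attached to a conflation $X\to Y\to Z$ of $\ul\ca(I)$, using that $can:\ul\ca\to\D_\ca$ is the model $\partial$-morphism — arises as the diagram of such a square, it follows that $F_{I\times\boxempty}$ is weakly triangulated, \resp\ weakly exact, and that $F_IX\to F_IY\to F_IZ$ extends to a distinguished triangle. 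In case a) this already furnishes the functorial distinguished triangles required of the $\partial$-functor $F_I:\ul\ca(I)\to\S(I)$, the connecting morphism being read off the square and functoriality in the conflation being inherited from that of the square construction. In case b) one moreover builds the suspension comparison: with $j,i:I\to I\times\Delta_1$ the open and closed inclusions, $\Sigma_I\cong j^?i_*$ by Lemma \ref{lem:shift}; since $F_{I\times\Delta_1}$ is weakly triangulated, Lemma \ref{lem:diagramtimesarrow} gives a natural isomorphism $\psi_j:j^?F_{I\times\Delta_1}\iso F_Ij^?$; and since $j^*F_{I\times\Delta_1}(i_*Y)\cong F_I(j^*i_*Y)=F_I(0)\cong0$ via $\varphi_j$, the recollement triangle $j_!j^*W\to W\to i_*i^*W\to\Sigma j_!j^*W$ at $W=F_{I\times\Delta_1}(i_*Y)$, together with $i^*F_{I\times\Delta_1}i_*\cong F_Ii^*i_*=F_I$, yields a canonical isomorphism $F_{I\times\Delta_1}i_*\iso i_*F_I$. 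Composing these produces a canonical isomorphism $\delta_I:F_I\Sigma_I^{\S}\iso\Sigma_I^{\T}F_I$; the dual construction via $\Omega_I\cong i^!j_!$ supplies its inverse, and as $I$ varies these assemble into a $2$-isomorphism $\delta:F\Sigma\iso\Sigma F$. (Alternatively, $\delta_I$ can be obtained directly from the cocartesian square with corners $X,0,0,\Sigma_IX$, to which Step 1 applies.)

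\medskip
\noindent\emph{Step 3: redundancy of the connecting morphism --- the main obstacle.} What remains, and where the real work lies, is to verify that with the above $\delta_I$ the functor $F_I$ is \emph{triangulated} and not merely weakly so: a distinguished triangle $X\to Y\to Z\to\Sigma X$ of $\S(I)$ with third map $w$ must be sent to a distinguished triangle $F_IX\to F_IY\to F_IZ\to\Sigma F_IX$ whose third map is forced to be $\delta_I\circ F_I(w)$ --- one does not separately impose that $F$ respect connecting morphisms. I would argue exactly as in the proofs of Lemma \ref{lem:shift} and Lemma \ref{lem:diagramtimesarrow}: the triangle comes from a bicartesian square $\cx\in\S(I\times\boxempty)$ with a zero corner whose connecting morphism is the \emph{unique} one compatible with the two evident maps, because the obstruction group (of the form $\Hom(\Sigma(\text{corner}),j_!j^?(-))$ in the recollement picture of \cite{CisinskiNeeman08} and Definition \ref{def:recollement}) vanishes by adjunction, by the recollement axioms, and by $i^*j_!=0$; by Step 1, $F_{I\times\boxempty}(\cx)$ is again such a square, and $F$ transports the vanishing through its isomorphisms $\varphi$ and the canonical $i^*j_!=0$ on the target, so the connecting morphism of $F_{I\times\boxempty}(\cx)$ is forced, and a diagram chase assembling $\varphi_u$, their mates, $\psi_j$, $\psi_{jj}$ and $\varphi_{ii}$ identifies it with $\delta_I\circ F_I(w)$. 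I expect the genuine difficulty to be precisely this bookkeeping --- organising the various comparison $2$-cells into commuting diagrams so as to produce a \emph{morphism} of distinguished triangles rather than a mere objectwise isomorphism, together with the naturality of $\delta_I$ and its coherence in $I$. Case a) is handled in the same manner, with Lemma \ref{lem:exdiagramtimesarrow} in place of Lemma \ref{lem:diagramtimesarrow} --- its hypothesis that the vertical parallel arrows be deflations being met because the squares in play arise from conflations $X\to Y\to Z$ --- and with $can$ serving as the model exact morphism.
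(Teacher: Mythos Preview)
Your Step 1 is essentially what the paper uses (implicitly): the fibrewise criterion promotes the hypothesis ``$F_\square$ preserves bicartesian squares'' to ``$F_{I\times\square}$ preserves bicartesian squares'' for every $I$, and the paper takes this for granted when it asserts that $\eps^{F^I_\square S}$ is invertible.

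The approaches part company thereafter. The paper does \emph{not} route through the recollement lemmas at all; Lemmas \ref{lem:shift}--\ref{lem:exdiagramtimesarrow} are reserved for Proposition \ref{prop:redundancy}, which runs in the \emph{opposite} direction (weakly triangulated $\Rightarrow$ bicartesian-preserving). Instead the paper works directly with the polycartesian two-square construction that \emph{defines} the canonical triangulated structure in \cite{Maltsiniotis07}: from the bicartesian square $S\in\S^I(\boxempty)$ one forms $P:={i_\squarearrow}_!\,{i_\square}_*\,F^I_\square S\in\S^I(\twosquare)$, checks that its left, right and global sub-squares are all cocartesian (using only Step~1 plus a base-change isomorphism $\il_!\,{i_{\lefthalfcap,\squarearrow}}^*\iso r_\square^*\,{i_\squarearrow}_!$ coming from the pushout $\lefthalfcap\cup_\square\squarearrow\simeq\twosquare$), and then reads off both the isomorphism $P^I_{12}\iso\Sigma_I F_IX$ and the connecting morphism $F_IZ\to\Sigma_I F_IX$ as the data of the right-hand square. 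Functoriality in the conflation/triangle is obtained by feeding the morphism ${\mu^I_\square}^S$ through the same construction and checking that ${i_\squarearrow}_!\,{i_\square}_*\,{\mu^I_\square}^S$ is polycartesian in the obvious sense. This produces $\delta_I$ and the preservation of connecting morphisms simultaneously, with no separate uniqueness argument.

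Your Step 3 has a genuine gap. The vanishing you quote, $\Hom(\Sigma\,\il_!\il^*\cx,\,{i_{11}}_!{i_{11}}^?\cx)=0$, forces uniqueness of the connecting morphism in the recollement triangle $\il_!\il^*\cx\to\cx\to{i_{11}}_!{i_{11}}^?\cx\to\Sigma\,\il_!\il^*\cx$, which lives in $\S(I\times\boxempty)$. It does \emph{not} force uniqueness of the connecting morphism in the triangle $X\to Y\to Z\xrightarrow{w}\Sigma_IX$ downstairs in $\S(I)$: for an arbitrary distinguished triangle the third map is not determined by the first two, so the recollement obstruction upstairs does not descend. To identify $\delta_I\circ F_I(w)$ with the canonical connecting morphism of $F_IX\to F_IY\to F_IZ$ you must trace how $w$ itself is manufactured---namely from the polycartesian $\twosquare$ extension---and show that $F$ respects that construction; at that point you have arrived at the paper's argument. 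Your parenthetical ``alternative'' at the end of Step 2, building $\delta_I$ from the bicartesian square with corners $X,0,0,\Sigma_IX$, is in fact the seed of the paper's route: extending that square one step further to the $\twosquare$ is exactly what is needed, and is simpler than assembling $\psi_j$, $\psi_{jj}$, $\varphi_{ii}$ into a coherent package.
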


\begin{proof}
a) We begin with an exact morphism of prederivators $F : \ul\ca \to \S$
as in the hypothesis. We have to show that the functor
$F_I : \ul\ca(I) \to \S(I)$ is exact, for all diagrams $I$. This
means that, given a conflation $\eps$ in $\ul\ca(I) = \ul\ca^I(e)$
\[
\xymatrix{
X \ar@{ >->}[r]^<>(0.5)i & Y \ar@{->>}[r]^<>(0.5)d & Z \ko
}
\]
there is a natural transformation $\delta_I$ which assigns a morphism
$\delta_I\eps : F_IZ \to \Sigma_IF_IX$ in such a way that
\begin{equation} \label{eqn:F(eps)}
\xymatrix{
F_IX \ar[r]^<>(0.5){F(i)} & F_IY \ar[r]^<>(0.5){F(d)} & F_IZ
\ar[r]^<>(0.5){\delta_I\eps} & \Sigma_IF_IX
}
\end{equation}
is a distinguished triangle of $\S(I)$. Altogether, all these
natural transformations define a canonical $\partial$-morphism
$\delta$.

Associated with the conflation $\eps$ there is a bicartesian square
\[
\xymatrix{
X \ar@{ >->}[r] \ar@{->>}[d] & Y \ar@{->>}[d] \\
0 \ar@{ >->}[r] & Z
}
\]
lying in $\ul\Hom(\boxempty^\circ , \ul\ca^I(e))$. Since we can
write this category as $\ul\Hom(\Delta_1^\circ , \ul\ca^I(\Delta_1))$,
thanks to the axiom {\bf Der 5} there exists a bicartesian
object $S$ in $\A^I(\boxempty)$ whose diagram is the square above.

Clearly, we have a canonical iso $\eps^S : \il_!\il^* S \iso S$.
For the sake of simplicity in the notation, here we write
$i_\il$, $i_\square$, etc., instead of the correct ones
$i^I_\il = i_{I \times \il}$, $i^I_\square = i_{I \times \square}$,
etc. Since the morphism $F$ is supposed to be exact, we get an
object $F^I_\square S$ in $\S^I(\boxempty)$ having a commutative
square as diagram
\begin{eqnarray*}
\begin{matrix}
\\\\\\
\dia^I_\square(F^I_\square S) = F^I_e(\dia^I_\square S)
\end{matrix}
&
\begin{matrix}
\\\\\\
\mbox{=}
\end{matrix}
&
\xymatrix{
F^I_e X \ar[r] \ar[d] & F^I_e Y \ar[d] \\
0 \ar[r] & F^I_e Z
}
\begin{matrix}
\\\\\\
\mbox{,}
\end{matrix}
\end{eqnarray*}
such that
$\eps^{F^I_\square S} : \il_!\il^* F^I_\square S \to F^I_\square S$
is invertible. Our local claim is that there exists a
distinguished triangle in $\S^I(e)$
\[
\xymatrix{
F^I_e X \ar[r] & F^I_e Y \ar[r] & F^I_e Z \ar[r]^<>(0.5){\delta^I_e\eps} &
\Sigma F^I_e X \ko
}
\]
for some morphism $\delta^I_e\eps : F^I_e Z \to \Sigma F^I_e X$,
functorial in $\eps$, \ie, that the functor $F^I_e$ is exact.

Let us fix some notation. We write $\twosquare$ for the diagram
$\Delta_2 \times \Delta_1$ and $\squarearrow$ for the diagram
$\twosquare$ after we erase the object $(1,1)$. There are obvious
fully faithful inclusions $i_\square : \square \to \squarearrow$,
$\il : \lefthalfcap \to \square$ and
$i_\squarearrow : \squarearrow \to \twosquare$, injective
on objects. Moreover, we define other inclusions
$l_\square : \square \to \twosquare$ and
$r_\square : \square \to \twosquare$ defined by the evident
overlap of the image of the square over the small squares
on the left and on the right of the two-square, respectively.
These functors further induce other inclusions
$l_\lefthalfcap : \lefthalfcap \to \twosquare$
and $r_\lefthalfcap : \lefthalfcap \to \twosquare$, which
are the compositions $l_\square i_\lefthalfcap$ and
$r_\square i_\lefthalfcap$, respectively. We also consider the
`global' inclusion $g_\square : \square \to \twosquare$ mapping
the square on the exterior bord of the two-square and the inclusion
$i_{\lefthalfcap \vir \squarearrow} : \lefthalfcap \to \squarearrow$
mapping the diagram $\lefthalfcap$ to the right corner of the
diagram $\squarearrow$.

Let us define the object
$P := {i_\squarearrow}_! {i_\square}_* F^I_\square S$. This is
a {\em polycartesian} object of $\S^I(\twosquare)$ in the sense
of Maltsiniotis \cite{Maltsiniotis07}, \ie, there are isomorphisms
\begin{center}
$\eps^{{l_\square}^*P} : \il_!\il^* {l_\square}^*P \iso {l_\square}^*P$, \;\;\;
$\eps^{{r_\square}^*P} : \il_!\il^* {r_\square}^*P \iso {r_\square}^*P$, \;\;\;
$\eps^{{g_\square}^*P} : \il_!\il^* {g_\square}^*P \iso {g_\square}^*P$.
\end{center}
It suffices to check the first and the second isos. We compute
\begin{eqnarray*}
\il_!\il^* {l_\square}^*P
& = &
\il_!\il^* {i_\square}^* {i_\squarearrow}^* {i_\squarearrow}_!
{i_\square}_* F^I_\square S \\
& = &
\il_!\il^* {i_\square}^* {i_\square}_* F^I_\square S \\
& = &
\il_!\il^* F^I_\square S \\
& \iso &
F^I_\square S \\
& = &
{i_\square}^* {i_\square}_* F^I_\square S \\
& = &
{i_\square}^* {i_\squarearrow}^* {i_\squarearrow}_!
{i_\square}_* F^I_\square S \\
& = &
{l_\square}^*P .
\end{eqnarray*}
Here we use the relation $l_\square = i_\squarearrow i_\square$
in the first and the last equalities. The crucial iso
$\eps^{F^I_\square S}$ is the fourth, which is our
hypothesis. As for the square on the right, let us compute
\begin{eqnarray*}
\il_!\il^* {r_\square}^*P
& = &
\il_!\il^* {r_\square}^* {i_\squarearrow}_! {i_\square}_* F^I_\square S \\
& = &
\il_! {r_\lefthalfcap}^* {i_\squarearrow}_! {i_\square}_* F^I_\square S \\
& = &
\il_! {i_{\lefthalfcap \vir \squarearrow}}^*
{i_\squarearrow}^* {i_\squarearrow}_! {i_\square}_* F^I_\square S \\
& = &
\il_! {i_{\lefthalfcap \vir \squarearrow}}^* {i_\square}_* F^I_\square S \\
& \iso &
{r_\square}^* {i_\squarearrow}_! {i_\square}_* F^I_\square S \\
& = &
{r_\square}^*P .
\end{eqnarray*}
Here, the only non trivial iso comes from the invertible
natural transformation
$\il_! {i_{\lefthalfcap \vir \squarearrow}}^* \iso
{r_\square}^* {i_\squarearrow}_!$. To see why this transformation
is invertible, let us consider the co-cartesian square in the
category $\Dia$
\[
\xymatrix{
\lefthalfcap \ar[r]^<>(0.5){\il}
\ar[d]_{i_{\lefthalfcap \vir \squarearrow}} &
\square \ar[d]^{r_\square} \\
\squarearrow \ar[r]_{i_\squarearrow} & \twosquare .
}
\]
By the ``commentaires" after the axiom {\bf Der 7} in
\cite{Maltsiniotis07} or, in greater detail, by the dual of
Prop. 6.9 in \cite[Prop. 6.9]{CisinskiNeeman08} the required
isomorphism follows. The local claim follows by the description
of the triangulated structure over $\S^I(e)$ as, \eg, in
\cite{Maltsiniotis07}. Indeed, the diagram of the object $P$
is
\begin{eqnarray*}
\begin{matrix}
\\\\\\
\dia^I_\twosquare(P)
\end{matrix}
&
\begin{matrix}
\\\\\\
\mbox{=}
\end{matrix}
&
\xymatrix{
F^I_e X \ar[r] \ar[d] & F^I_e Y \ar[d] \ar[r] & 0 \ar[d] \\
0 \ar[r] & F^I_e Z \ar[r]_{f_{2}} & P^I_{12}
}
\begin{matrix}
\\\\\\
\mbox{,}
\end{matrix}
\end{eqnarray*}
and there is an isomorphism $\theta^I : P^I_{12} \iso \Sigma^I_e F^I_e X$
which induces a {\em standard} distinguished triangle in $\S^I(e)$
\[
\xymatrix{
F^I_e X \ar[r] & F^I_e Y \ar[r] & F^I_e Z \ar[r]^<>(0.5){\delta^I_e\eps} &
\Sigma^I_e F^I_e X \ko
}
\]
where the morphism $\delta^I_e\eps : F^I_e Z \to \Sigma F^I_e X$
is given by the composition $\theta^I \circ f_2$. Thus, this
triangle provides a canonical construction of the distinguished
triangle \myref{eqn:F(eps)}. \\

It remains to show the functoriality of the construction.
So, let us consider an exact $2$-morphism $\mu : F \to F'$, where
$F' : \ul\ca \to \S$ is another exact morphism of prederivators.
We desire to show that the functor $\mu_I : F_I \to F'_I$
actually is a natural transformation of $\partial$-functors.
Indeed, given a conflation $\eps$ in $\ul\ca(I) = \ul\ca^I(e)$
\[
\xymatrix{
X \ar@{ >->}[r] & Y \ar@{->>}[r] & Z \ko
}
\]
thanks to the axiom {\bf Der 5} there is a bicartesian object
$S$ in $\A^I(\boxempty)$ whose diagram is as above. Again,
we have a canonical iso $\eps^S : \il_!\il^* S \iso S$.
We already know that there are isomorphisms
$\eps^{F^I_\square S} : \il_!\il^* F^I_\square S \iso
F^I_\square S$ and $\eps^{F'^I_\square S} :
\il_!\il^* F'^I_\square S \iso F'^I_\square S$ because
$F$ and $F'$ are exact.

Since the $2$-morphism of prederivators $\mu : F \to F'$ is
supposed to be exact, it induces a morphism
${\mu^I_\square}^S : F^I_\square S \to F'^I_\square S$
in $\S^I(\boxempty)$, whose diagram in $\S^I(e)$ is
\begin{eqnarray*}
\begin{matrix}
\\\\\\
\dia^I_\square({\mu^I_\square}^S) = {\mu^I_e}^{\dia^I_\square S}
\end{matrix}
&
\begin{matrix}
\\\\\\
\mbox{=}
\end{matrix}
&
\xymatrix@!0{
F^I_eX \ar[rr] \ar[dd] \ar[dr] && F^I_eY \ar'[d][dd] \ar[dr] \\
& 0 \ar[rr] \ar[dd] && F^I_eZ \ar[dd] \\
F'^I_eX \ar'[r][rr] \ar[dr] && F'^I_eY \ar[dr] \\
& 0 \ar[rr] && F'^I_eZ
}
\begin{matrix}
\\\\\\
\mbox{\ko}
\end{matrix}
\end{eqnarray*}
such that the morphism
$\il_!\il^*{\mu^I_\square}^S \iso {\mu^I_\square}^S$ is
invertible (\cf item c) of Def. \ref{def:extria}).

Our local claim is that there exists a morphism of distinguished
triangles in $\S^I(e)$
\[
\xymatrix{
F^I_e X \ar[r] \ar[d]_{{\mu^I_e}^X} &
F^I_e Y \ar[r] \ar[d]_{{\mu^I_e}^Y} &
F^I_e Z \ar[r]^<>(0.5){\delta^I_e\eps} \ar[d]^{{\mu^I_e}^Z} & \Sigma F^I_e X
\ar[d]^<>(0.5){\Sigma{\mu^I_e}^X} \\
F'^I_e X \ar[r] & F'^I_e Y \ar[r] &
F'^I_e Z \ar[r]^<>(0.5){\delta'^I_e\eps} & \Sigma F'^I_e X .
}
\]

As above, let us define the polycartesian objects
$P := {i_\squarearrow}_! {i_\square}_* F^I_\square S$ and
$P' := {i_\squarearrow}_! {i_\square}_* F'^I_\square S$.
We also define a morphism $\mu^P : P \to P'$ to be
${i_\squarearrow}_! {i_\square}_* {\mu^I_\square}^S$. This
morphism is {\em polycartesian} in the following sense:
there are isomorphisms
\begin{center}
$\eps^{{l_\square}^*\mu^P} :
\il_!\il^* {l_\square}^*\mu^P \iso
{l_\square}^*\mu^P$,
$\eps^{{r_\square}^*\mu^P} :
\il_!\il^* {r_\square}^*\mu^P \iso
{r_\square}^*\mu^P$,
$\eps^{{g_\square}^*\mu^P} :
\il_!\il^* {g_\square}^*\mu^P \iso
{g_\square}^*\mu^P$.
\end{center}
It suffices to check the first and the second isos. We compute
\begin{eqnarray*}
\il_!\il^* {l_\square}^*{\mu^P}
& = &
\il_!\il^* {i_\square}^*
{i_\squarearrow}^* {i_\squarearrow}_!
{i_\square}_* {\mu^I_\square}^S \\
& = &
\il_!\il^* {i_\square}^* {i_\square}_* {\mu^I_\square}^S \\
& = &
\il_!\il^* {\mu^I_\square}^S \\
& \iso &
{\mu^I_\square}^S \\
& = &
{i_\square}^* {i_\square}_* {\mu^I_\square}^S \\
& = &
{i_\square}^* {i_\squarearrow}^* {i_\squarearrow}_!
{i_\square}_* {\mu^I_\square}^S \\
& = &
{l_\square}^*{\mu^P} .
\end{eqnarray*}
Here we use the relation $l_\square = i_\squarearrow i_\square$
in the first and the last equalities. The crucial iso
$\il_!\il^* {\mu^I_\square}^S \iso {\mu^I_\square}^S$ is the
fourth, which is our hypothesis.
As for the square on the right, let us compute
\begin{eqnarray*}
\il_!\il^* {r_\square}^*\mu^P
& = &
\il_!\il^* {r_\square}^* {i_\squarearrow}_!
{i_\square}_* {\mu^I_\square}^S \\
& = &
\il_! {r_\lefthalfcap}^* {i_\squarearrow}_!
{i_\square}_* {\mu^I_\square}^S \\
& = &
\il_! {i_{\lefthalfcap \vir \squarearrow}}^*
{i_\squarearrow}^* {i_\squarearrow}_!
{i_\square}_* {\mu^I_\square}^S \\
& = &
\il_! {i_{\lefthalfcap \vir \squarearrow}}^*
{i_\square}_* {\mu^I_\square}^S \\
& \iso &
{r_\square}^* {i_\squarearrow}_! {i_\square}_*
{\mu^I_\square}^S \\
& = &
{r_\square}^*\mu^P .
\end{eqnarray*}
Here, the isos we are using are in complete analogy with those
at the corresponding point in the proof of the object case.
The local claim follows by the description of the triangulated
structure over $\S^I(e)$ as, \eg, in \cite{Maltsiniotis07} and
an analogous construction to the one we have just done for the
object case that we leave to the interested reader. Altogether,
these natural transformations of functors $\mu_I : F^I \to F'^I$
clearly define an exact $2$-morphism of $\partial$-morphisms
$\mu : F \to F'$. \\

b) Since the statement and the proof are analogous to those in
item a) we just sketch out the proof of this case.

The main difference is that we start with a triangulated morphism
$F : \S \to \T$ of triangulated derivators. Suppose we are given
a distinguished triangle with respect to the {\em canonical}
triangulated structure of the category $\S(I)$,
\[
\xymatrix{
X \ar[r]^f & Y \ar[r]^g & Z \ar[r]^<>(0.5){\eps} &
\Sigma_I X \ko
}
\]
This means that, up to iso, there is a polycartesian object $P$
in $\S(\twosquare \times I)$ whose diagram is
\begin{eqnarray*}
\begin{matrix}
\\\\\\
\dia^I_\twosquare(P)
\end{matrix}
&
\begin{matrix}
\\\\\\
\mbox{=}
\end{matrix}
&
\xymatrix{
X \ar[r] \ar[d] & Y \ar[d] \ar[r] & 0 \ar[d] \\
0 \ar[r] & Z \ar[r]_{f_{2}} & P^I_{12}
}
\begin{matrix}
\\\\\\
\mbox{,}
\end{matrix}
\end{eqnarray*}
with an isomorphism $\theta^I : P^I_{12} \iso \Sigma^I X$.
From now on the proof is the same as in item a) by considering the
polycartesian object $F_I(P)$ instead of the previous one $P$.
The construction provides us with an invertible natural
transformation $\delta_I : F_I\Sigma_I \iso \Sigma_IF_I$.
Altogether, all these transformations define a canonical
triangulated $2$-isomorphism $\delta : F \Sigma \iso \Sigma F$.
\end{proof}

Clearly, we may forget the property of a morphism of derivators of
being triangulated. Consequently, we also forget the {\em canonical}
$2$-morphism $\delta$ and think of the underlying additive structure
of categories and morphisms only. We indicate the related forgetful
functor by means of the vertical bar `$|$'.

\begin{proposition} \label{prop:redundancy}
Let $\ul\ca$ be the exact prederivator represented by an exact
category $\ca$. Let $\S$ and $\T$ be triangulated derivators.
Let us suppose that all these (pre)derivators be of some type $\Dia$.
\begin{itemize}
\item[a)] The forgetful functor
\[
\ul\HOM_{tr}(\S,\T) \to \ul\HOM_{add}(\S|,\T|)
\]
is an isomorphism onto the full subcategory consisting of the
additive morphisms of derivators $F$ such that
$F_{\Delta_1} : \S(\Delta_1) \to \T(\Delta_1)$ and
$F_\square : \S(\boxempty) \to \T(\boxempty)$ are
{\em weakly} triangulated functors with respect to the canonical
triangulated structures of these categories.
\item[b)] The forgetful functor
\[
\ul\HOM_{\partial}(\ul\ca,\S) \to \ul\HOM_{add}(\ul\ca|,\S|)
\]
is an isomorphism onto the full subcategory consisting of the
additive morphisms of derivators $F$ such that
$F_{\Delta_1} : \ul\ca(\Delta_1) \to \S(\Delta_1)$
and $F_\square : \ul\ca(\boxempty) \to \S(\boxempty)$ are
{\em weak} $\partial$-functors with respect to the canonical
additive and triangulated structures of these categories.
\end{itemize}
\end{proposition}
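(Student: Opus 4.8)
The plan is to reduce both assertions to Proposition~\ref{prop:rightdef} combined with Lemmas~\ref{lem:diagramtimesarrow}, \ref{lem:exdiagramtimesarrow} and \ref{lem:shift}. Note first that, by Remark~\ref{rmk:all}, every additive $2$-morphism between triangulated (\resp, $\partial$-) morphisms of derivators is automatically triangulated (\resp, exact), and that being triangulated (\resp, a $\partial$-morphism) is a \emph{property} of an additive morphism of derivators (Definition~\ref{def:extria}), not an extra datum; hence each of the two forgetful functors in the statement is simply the inclusion of a full subcategory, and it suffices to identify that subcategory on objects. Concretely, we must show that for an \emph{additive} morphism $F$ the condition that $F_\square$ send bicartesian squares to bicartesian squares is equivalent to the condition that $F_{\Delta_1}$ and $F_\square$ be weakly triangulated (\resp, weak $\partial$-) functors (Definition~\ref{def:weakly}). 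One implication is immediate: if $F$ is triangulated (\resp, a $\partial$-morphism), then by Proposition~\ref{prop:rightdef} every local functor $F_I$ --- in particular $F_{\Delta_1}$ and $F_\square$ --- acquires a canonical structure of triangulated functor (\resp, $\partial$-functor), and is \emph{a fortiori} weakly triangulated (\resp, weakly exact).

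For the converse --- the form of the ``redundancy of the connecting morphism'' we need --- assume $F$ is additive with $F_{\Delta_1}$ and $F_\square$ weakly triangulated (\resp, weakly exact). Fix a bicartesian $X$ in $\S(\boxempty)$ (\resp, a bicartesian square of the exact structure in $\ul\ca(\boxempty)$ as in \myref{eqn:square}); by {\bf Der 7} it is in particular cocartesian, so $\eps^X : \il_!\il^* X \iso X$. A mate computation shows that $\eps^{F_\square X}$ factors as
\[
\eps^{F_\square X} = F_\square(\eps^X) \circ \Phi_X ,
\]
where $\Phi_X : \il_!\il^* F_\square X \to F_\square(\il_!\il^* X)$ is the canonical comparison assembled from the structural isomorphism $\varphi_\il$ and the induced transformation $\varphi^\il$ of subsection~\ref{ss:mor}. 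Since $\eps^X$, hence $F_\square(\eps^X)$, is invertible, it is enough to prove that $\varphi^\il$ is invertible on the objects in the image of $\il^*$. By {\bf Der 2} and the full faithfulness of $\il$ this reduces to invertibility of $\varphi^\il$ at the single vertex of $\boxempty$ lying outside $\lefthalfcap$, where the pointwise formula {\bf Der 4} identifies the statement with the assertion that $F$ commutes with the homotopy pushout that computes that corner.

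To settle this last point I would apply Lemma~\ref{lem:diagramtimesarrow} (\resp, Lemma~\ref{lem:exdiagramtimesarrow}) with $I = \Delta_1$, so that $I \times \Delta_1 = \boxempty$: weak triangulatedness (\resp, weak exactness) of $F_{\Delta_1 \times \Delta_1} = F_\square$ forces the canonical transformations $\psi_j$ and $\psi_{jj}$ to be $2$-isomorphisms, \ie\ $F$ commutes up to canonical isomorphism with the six gluing functors of the recollement attached to the obvious open and closed inclusions $\Delta_1 \hookrightarrow \boxempty$, and --- via Lemma~\ref{lem:shift} --- with the suspension and loop autoequivalences $\Sigma$ and $\Omega$. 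Realising the homotopy pushout above as an iterated cofiber built out of these gluing functors and the shift, concretely by factoring the inclusion $\il : \lefthalfcap \to \boxempty$ through the diagrams $\squarearrow$ and $\twosquare$ exactly as in the polycartesian-object construction in the proof of Proposition~\ref{prop:rightdef}, one reads off that $F$ preserves it. Hence $\eps^{F_\square X}$ is invertible, $F_\square X$ is cocartesian, and by {\bf Der 7} bicartesian; so $F$ is triangulated (\resp, a $\partial$-morphism), and Proposition~\ref{prop:rightdef} then promotes this to the canonical triangulated-functor (\resp, $\partial$-functor) structure on each $F_I$. Part~b) runs entirely parallel to part~a), the only differences being the systematic use of Lemma~\ref{lem:exdiagramtimesarrow} instead of Lemma~\ref{lem:diagramtimesarrow}, the pointedness of $\ul\ca$ (\cite[Cor.~3.8]{Groth11}), and the restriction to bicartesian squares of the exact structure, for which ``exact'' and ``preserves bicartesian squares'' coincide by \cite[Prop.~A.1]{Keller90}.

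The main obstacle, I expect, is precisely this last move. Lemmas~\ref{lem:diagramtimesarrow} and \ref{lem:exdiagramtimesarrow} only control $F$ along the open and closed inclusions $\Delta_1 \hookrightarrow I \times \Delta_1$, whereas $\il : \lefthalfcap \to \boxempty$ (and dually $\ir : \righthalfcup \to \boxempty$) is neither open nor closed; so the commutation of $F_\square$ with $\il_!$ has to be extracted indirectly, by threading the argument through the intermediate one- and two-square diagrams and keeping track of all the canonical identifications --- it is exactly this care that upgrades the two forgetful functors from equivalences to genuine isomorphisms onto the stated full subcategories. A secondary, purely bookkeeping difficulty is fixing the orientation conventions so that the ``missing corner'' is consistently computed by the left (\resp, right) Kan extension.
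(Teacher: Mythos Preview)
Your overall architecture matches the paper's: the forgetful functor is a full inclusion by Remark~\ref{rmk:all}, one direction is Proposition~\ref{prop:rightdef}, and the converse amounts to showing that the mate $\varphi_{\il\il}^X : \il_!\il^* F_\square X \to F_\square\,\il_!\il^* X$ is invertible (so that $\eps^{F_\square X}$ is invertible iff $\eps_\il^X$ is). But the way you propose to establish this invertibility is not the paper's, and as written it has a gap.

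The paper does \emph{not} try to check $\varphi^\il$ pointwise at the vertex $(1,1)$ via {\bf Der 4}, nor does it route the argument through the diagrams $\squarearrow$ and $\twosquare$ or invoke Lemma~\ref{lem:shift}; those devices belong to Proposition~\ref{prop:rightdef} and to the construction of the canonical triangulation, not here. Instead, the paper compares two distinguished triangles coming from the recollement attached to the pair $\bigl(i_{11}:e\hookrightarrow\boxempty,\; \il:\lefthalfcap\hookrightarrow\boxempty\bigr)$: the one obtained by applying the weakly triangulated $F_\square$ to the canonical triangle for $X$, and the canonical triangle for $F_\square X$. The morphism $\varphi_{\il\il}^X$ sits in the first slot of a morphism of triangles whose third-vertex component is $\psi_{i_{11}i_{11}}^X : {i_{11}}_!{i_{11}}^?F_\square X \to F_\square\,{i_{11}}_!{i_{11}}^?X$; uniqueness of that component follows from $\Hom(\Sigma\il_!\il^*F_\square X,\,F_\square{i_{11}}_!{i_{11}}^?X)=0$, which one gets by adjunction and $\il^*{i_{11}}_!=0$. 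So the whole burden is to show $\psi_{i_{11}i_{11}}^X$ is an isomorphism.

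Here is the step you are missing. The inclusion $i_{11}$ is factored as $e \xrightarrow{i_1} \Delta_1 \xrightarrow{i_{\Delta_1}} \boxempty$, and Lemma~\ref{lem:diagramtimesarrow} is applied \emph{twice}: once with $I=e$ and $j=i_1$ (this is where the hypothesis on $F_{\Delta_1}$ is used), and once with $I=\Delta_1$ and $j=i_{\Delta_1}$ (this is where the hypothesis on $F_\square$ is used). The two resulting isomorphisms $\psi_{i_1}$ and $\psi_{i_{\Delta_1}}$ are then spliced together --- with a careful diagram chase verifying the compatibility $\psi_{i_{11}i_{11}}^X\circ\eta_{i_{11}}^{F_\square X}=F_\square(\eta_{i_{11}}^X)$ --- to produce the required isomorphism $\psi_{i_{11}i_{11}}^X$. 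Your single application with $I=\Delta_1$ only controls the gluing functors for one of the two $\Delta_1$-factors of $\boxempty$ and leaves the $F_{\Delta_1}$ hypothesis unused; the subsequent appeal to an ``iterated cofiber'' via polycartesian objects does not supply the missing piece. Part~b) is indeed parallel, using Lemma~\ref{lem:exdiagramtimesarrow} in place of Lemma~\ref{lem:diagramtimesarrow} and observing that for a bicartesian square of the exact structure the adjunction morphism $\eps_\il^X$ is a deflation, so that the weak $\partial$-functor $F_\square$ produces the needed triangle.
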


\begin{proof}
Let us note that we already know from proposition \ref{prop:rightdef}
that the forgetful functors in the statements a) and b) actually take
a triangulated (\resp, exact) morphism of (pre)derivators $F$ to an
additive morphism $F|$ that is locally triangulated (\resp, locally exact),
and, a fortiori, locally weakly triangulated (\resp, locally weakly
exact). Moreover, since we know from remark \ref{rmk:all} that our
forgetful functors are fully faithful, it remains to check their
essential surjectivity only.

a) Suppose we are given an arbitrary additive morphism
$F : \S \to \T$ such that $F_{\Delta_1}$ and $F_\square$ are
weakly triangulated functors. We have to check that it preserves
(homotopy) bicartesian objects according to the condition in the
item b) of Definition \ref{def:extria}. Since the derivators $\S$
and $\T$ have the {\em property} of being triangulated, thanks to
the axiom {\bf Der 7} it is sufficient to check that $F$ preserves
(homotopy) cocartesian objects.

Let us denote by $ab$ the objects in the diagrams $\lefthalfcap$,
$\righthalfcup$ and $\boxempty$ whose coordinates are $(a,b)$.
Accordingly, we denote $i_{ab,\lefthalfcap}$, $i_{ab,\righthalfcup}$
and $i_{ab,\square}$ the functors defined by the object
$ab$ having coordinates $(a,b)$, which send the diagram $e$ into
the diagrams $\lefthalfcap$, $\righthalfcup$ and $\boxempty$,
respectively. We simply write $i_{ab}$ when there is no
possibility of confusion.

Remark that the pair of open / closed immersions
$\xymatrix{e \ar@{ (->}[r]^{i_{11}} & \boxempty &
\lefthalfcap \ar@{ (->}[l]_{\il}}$
gives rise to a recollement of triangulated categories
\[
\xymatrix{
\S(e) \ar[rr]^{{i_{11}}_!} && \S(\boxempty) \ar[rr]^{\il^*}
\ar@/^1.5pc/[ll]^{{i_{11}}^*} \ar@/_1.5pc/[ll]_{{i_{11}}^?} &&
\S(\lefthalfcap) \ar@/^1.5pc/[ll]^{\il_*} \ar@/_1.5pc/[ll]_{\il_!} .
}
\]
Thus, for any object $X$ in $\S(\boxempty)$, there is a
distinguished triangle
\[
\xymatrix{
\il_!\il^*X \ar[r]^<>(0.5){\varepsilon} & X \ar[r]^<>(0.5){\eta} &
{i_{11}}_!{i_{11}}^?X \ar[r]^<>(0.5){\partial} &
\Sigma_\square \il_!\il^*X .
}
\]
Here, we respectively write $\varepsilon$, $\eta$, $\partial$
as for $\varepsilon_\il^X$, $\eta_{i_{11}}^X$, $\partial^X$.
Remark that the connecting morphism $\partial$ is unique since
$\Hom(\Sigma_\square \il_!\il^*X , {i_{11}}_!{i_{11}}^?X) = 0$
by adjunction and by the canonical $2$-isomorphism
$\il^* {i_{11}}_! = 0$, because of recollement.

Let us apply the {\em weakly} triangulated functor $F_\square$
and get a distinguished triangle in $\T(\boxempty)$
\[
\xymatrix{
F_\square \il_!\il^*X \ar[r]^<>(0.5){F_\square\varepsilon} &
F_\square X \ar[r]^<>(0.5){F_\square\eta} &
F_\square {i_{11}}_!{i_{11}}^?X \ar[r]^<>(0.5){w} &
\Sigma_\square F_\square \il_!\il^*X .
}
\]
Now, consider the canonical distinguished triangle associated
to the object $F_\square X$,
\[
\xymatrix{
\il_!\il^* F_\square X \ar[r]^<>(0.5){\varepsilon^{F_\square}} &
F_\square X \ar[r]^<>(0.5){\eta^{F_\square}} &
{i_{11}}_!{i_{11}}^? F_\square X \ar[r]^<>(0.5){\partial^{F_\square}} &
\Sigma_\square \il_!\il^* F_\square X \ko
}
\]
whose connecting morphism is also unique. Again, we use
$\varepsilon^{F_\square}$, $\eta^{F_\square}$,
$\partial^{F_\square}$ as for $\varepsilon_\il^{F_\square X}$,
$\eta_{i_{11}}^{F_\square X}$, $\partial^{F_\square X}$.

Let us call $i_1 : e \to \Delta_1$ the open immersion which identifies
$e$ with $1$ and $i_{\Delta_1} : \Delta_1 \to \boxempty$ the open
immersion which identifies $0$ and $1$ with $10$ and $11$, respectively.
Having fixed this notation, we can write $i_{11} = i_{\Delta_1} i_1$.
Let us fix an arbitrary object $X$ lying in $\S(\Delta_1)$ or,
respectively, in $\S(\boxempty)$. By applying Lemma
\ref{lem:diagramtimesarrow} twice to the diagram $I = e$ with
$j = i_1$ and to the diagram $I = \Delta_1$ with $j = i_{\Delta_1}$,
respectively, we get two canonical isomorphisms
\[
\psi_{i_1i_1}^X : {i_1}_!{i_1}^? F_{\Delta_1} X \iso
F_{\Delta_1} {i_1}_!{i_1}^? X
\qquad \mbox{and} \qquad
\psi_{i_{\Delta_1}i_{\Delta_1}}^X :
{i_{\Delta_1}}_!{i_{\Delta_1}}^? F_\square X \iso
F_\square {i_{\Delta_1}}_!{i_{\Delta_1}}^? X
\]
such that the two commutation relations
\[
\psi_{i_1i_1}^X \circ \eta_{i_1}^{F_{\Delta_1}X} =
F_{\Delta_1} (\eta_{i_1}^X) \qquad \mbox{and} \qquad
\psi_{i_{\Delta_1}i_{\Delta_1}}^X \circ
\eta_{i_{\Delta_1}}^{F_\square X} =
F_\square (\eta_{i_{\Delta_1}}^X)
\]
hold. Moreover, the functorial images of the two isomorphisms
under ${i_1}^*$ and ${i_{\Delta_1}}^*$, respectively, furnish
isomorphisms
\[
\psi_{i_1}^X : {i_1}^? F_{\Delta_1} X \iso F_e {i_1}^? X
\qquad \mbox{and} \qquad
\psi_{i_{\Delta_1}} : {i_{\Delta_1}}^? F_\square X \iso
F_{\Delta_1} {i_{\Delta_1}}^? X .
\]

We can combine all these isomorphisms via the compositions of
natural transformations
\[
\xymatrix{
\id \ar[rrr]^<>(0.5){\eta_{\Delta_1}}
\ar@/_2.5pc/[rrrrrr]_<>(0.5){\eta_{i_{11}}} &&&
{i_{\Delta_1}}_!{i_{\Delta_1}}^?
\ar[rrr]^<>(0.5){{i_{\Delta_1}}_! \eta_{i_1} {i_{\Delta_1}}^?} &&&
{i_{\Delta_1}}_!{i_1}_!{i_1}^?{i_{\Delta_1}}^? = {i_{11}}_!{i_{11}}^?
}
\]
and
\[
\xymatrix{
{i_{11}}^?{i_{11}}_! = {i_1}^?{i_{\Delta_1}}^?{i_{\Delta_1}}_!{i_1}_!
\ar[rrr]^<>(0.5){{i_1}^? \eps_{i_{\Delta_1}} {i_1}_!}
\ar@/_2.5pc/[rrrrrr]_<>(0.5){\eps_{i_{11}}} &&&
{i_1}^?{i_1}_!
\ar[rrr]^<>(0.5){\eps_{i_{\Delta_1}}} &&&
\id
}
\]
We get, canonically, an isomorphism
$\psi_{i_1}^{i_{\Delta_1}^? X} \circ
{i_1}^?(\psi_{i_{\Delta_1}}^X)$ as follows
\begin{eqnarray*}
{i_{11}}^? F_\square X = (i_{\Delta_1} i_1)^? F_\square X
                         & = & {i_1}^? {i_{\Delta_1}}^? F_\square X \\
                         & \iso & {i_1}^? F_{\Delta_1} {i_{\Delta_1}}^? X \\
                         & \iso & F_e {i_1}^? {i_{\Delta_1}}^? X
                         = F_e ({i_{\Delta_1} i_1})^? X
                         = F_e {i_{11}}^? X \ko
\end{eqnarray*}
which is canonically isomorphic to $\psi_{i_{11}}^X$.

Clearly, there is another functorial isomorphism
$\psi_{i_{11}i_{11}}^X : {i_{11}}_!{i_{11}}^? F_\square X \iso
F_\square {i_{11}}_! {i_{11}}^? X$, explicitly given by the
composition
\[
\xymatrix{
{i_{11}}_!{i_{11}}^? F_\square X
\ar[rrr]^<>(0.5){{i_{11}}_!(\psi_{i_{11}}^X)}_\sim &&&
{i_{11}}_! F_e {i_{11}}^? X
\ar[rrr]^<>(0.5){\varphi^{i_{11},{i_{11}}^?X}}_\sim &&&
F_\square {i_{11}}_! {i_{11}}^? X .
}
\]
Let us remark the important fact that the relation
$\psi_{i_{11}i_{11}}^X \circ \eta_{i_{11}}^{F_\square X} =
F_\square(\eta_{i_{11}}^X)$ holds. Indeed, we can form the
following diagram
\[
\xymatrix{
F_\square X \ar[r]^<>(0.5){\eta_{i_{\Delta_1}}^{F_\square X}}
\ar@{=}[ddd] &
{i_{\Delta_1}}_! {i_{\Delta_1}}^? F_\square X
\ar[rrr]^<>(0.5){{i_{\Delta_1}}_!
(\eta_{i_1}^{{i_{\Delta_1}}^? F_\square X})}
\ar[d]_<>(0.5){{i_{\Delta_1}}_!(\psi_{i_{\Delta_1}}^X)}^\sim &&&
{i_{\Delta_1}}_!{i_1}_!{i_1}^?{i_{\Delta_1}}^? F_\square X
\ar@{=}[r]
\ar[d]^<>(0.5){{i_{\Delta_1}}_! {i_1}_!{i_1}^?
(\psi_{i_{\Delta_1}}^X)}_\sim &
{i_{11}}_!{i_{11}}^? F_\square X
\ar[ddd]^<>(0.5){\psi_{i_{11}i_{11}}^X}_\sim \\
& {i_{\Delta_1}}_! F_{\Delta_1} {i_{\Delta_1}}^? X
\ar[rrr]^<>(0.5){{i_{\Delta_1}}_!
(\eta_{i_1}^{F_{\Delta_1} {i_{\Delta_1}}^? X})}
\ar@{=}[d] &&&
{i_{\Delta_1}}_!{i_1}_!{i_1}^? F_{\Delta_1} {i_{\Delta_1}}^? X
\ar[d]^<>(0.5){{i_{\Delta_1}}_!
(\psi^{i_{\Delta_1}^?X}_{i_1i_1})}_\sim \\
& {i_{\Delta_1}}_! F_{\Delta_1} {i_{\Delta_1}}^? X
\ar[rrr]^<>(0.5){{i_{\Delta_1}}_! F_{\Delta_1}
(\eta_{i_1}^{{i_{\Delta_1}}^? X})}
\ar[d]_<>(0.5){\varphi^{i_{\Delta_1} ,\: {i_{\Delta_1}}^?X}}^\sim &&&
{i_{\Delta_1}}_! F_{\Delta_1} {i_1}_!{i_1}^? {i_{\Delta_1}}^? X
\ar[d]^<>(0.5){\varphi^{i_{\Delta_1} ,\: {i_1}_!{i_1}^?
{i_{\Delta_1}}^?X}}_\sim \\
F_\square X \ar[r]_<>(0.5){F_\square \eta_{i_{\Delta_1}}^{X}} &
F_\square {i_{\Delta_1}}_!{i_{\Delta_1}}^? X
\ar[rrr]_<>(0.5){F_\square {i_{\Delta_1}}_!
(\eta_{i_1}^{{i_{\Delta_1}}^? X})} &&&
F_\square {i_{\Delta_1}}_!{i_1}_!{i_1}^?{i_{\Delta_1}}^? X
\ar@{=}[r] & F_\square {i_{11}}_!{i_{11}}^? X .
}
\]
Let us consider the first square on the left. By definition,
we have $\varphi^{i_{\Delta_1} ,\: {i_{\Delta_1}}^?X} \circ
{i_{\Delta_1}}_!(\psi_{i_{\Delta_1}}^X) =
\psi_{i_{\Delta_1}i_{\Delta_1}}^X$
and we know (\cf lemma \ref{lem:diagramtimesarrow}) that this
morphism is an iso and makes the square commute.

The three squares in the center of the diagram also commute.
Indeed, the square at the top commutes by functoriality of
$\eta_{i_1}$, the square in the middle commutes by lemma
\ref{lem:diagramtimesarrow} and the square at the bottom
commutes because $\varphi^{i_{\Delta_1}}$ is a natural
transformation.

The last square on the right also is commutative. This fact is
easy to check by using the general relation proved in subsection
\ref{ss:mor} in our special case $v = i_1$ and $u = i_{\Delta_1}$,
which gives the formula
\[
\varphi^{i_{\Delta_1}, \; {i_1}_!X} \circ
{i_{\Delta_1}}_!(\varphi^{i_1,X}) = \varphi^{i_{11}, \; X} \ko
\]
for all $X$ in $\S(e)$. By using this formula, we get a similar
relation, valid for any $X$ in $\S(e)$,
\begin{eqnarray*}
\psi_{i_{11}}^X & = &
\eps_{i_{11}}^{F_\square(i_{11})^?X} \circ
(i_{11})^?[(\varphi^{i_{11},\;(i_{11})^?X})^{-1}] \circ
(i_{11})^?F_e(\eta_{i_{11}}^X) \\
& = & \eps_{i_1}^{F_\square(i_{11})^?X} \circ
{i_1}^?(\eps_{i_{\Delta_1}}^{{i_1}_!F_\square(i_{11})^?X}) \circ
(i_{11})^?{i_{\Delta_1}}_![(\varphi^{{i_1},\;(i_{11})^?X})^{-1}] \circ \\
&& (i_{11})^?[(\varphi^{{i_{\Delta_1}},\;{i_1}_!(i_{11})^?X})^{-1}] \circ
(i_{11})^?F_e{i_{\Delta_1}}_!(\eta_{i_1}^{{i_{\Delta_1}}^?X}) \circ
(i_{11})^?F_e(\eta_{i_{\Delta_1}}^{X}) \\
& = & \eps_{i_1}^{F_\square(i_{11})^?X} \circ
{i_1}^?[(\varphi^{{i_1},\;(i_{11})^?X})^{-1}] \circ
{i_1}^?(\eps_{i_{\Delta_1}}^{F_{\Delta_1}
{i_1}_!{i_1}^?{i_{\Delta_1}}^?X}) \circ \\
&& (i_{11})^?{i_{\Delta_1}}_!F_{\Delta_1}
(\eta_{i_1}^{{i_{\Delta_1}}^?X}) \circ
(i_{11})^?[(\varphi^{{i_{\Delta_1}},\;{i_{\Delta_1}}^?X})^{-1}] \circ
(i_{11})^?F_e(\eta_{i_{\Delta_1}}^X) \\
& = & \eps_{i_1}^{F_\square(i_{11})^?X} \circ
{i_1}^?[(\varphi^{{i_1},\;(i_{11})^?X})^{-1}] \circ
{i_1}^?F_{\Delta_1}(\eta_{i_1}^{{i_{\Delta_1}}^?X}) \circ \\
&& {i_1}^?(\eps_{i_{\Delta_1}}^{F_{\Delta_1}{i_{\Delta_1}}^?X}) \circ
(i_{11})^?[(\varphi^{{i_{\Delta_1}},\;{i_{\Delta_1}}^?X})^{-1}] \circ
(i_{11})^?F_e(\eta_{i_{\Delta_1}}^X) \\
& = & \psi_{i_1}^{{i_{\Delta_1}}^?X} \circ
{i_1}^?(\psi_{i_{\Delta_1}}^X) .
\end{eqnarray*}
Altogether, all these formulae show commutativity of the last
square on the right.

Coming back to our fixed object $X$ of $\S(\boxempty)$, let us
consider the morphism
$\varphi_{\il\il}^X : \il_!\il^* F_\square X \to F_\square \il_!\il^* X$
that means, in our notations, the composition
$\varphi^{\il,\;\il^*X} \circ \il_![(\varphi_\il^X)^{-1}]$.
We can see that the morphism $\eps^{F_\square}$ factors
through $\varphi_{\il\il}^X$. Indeed, we can compute
\begin{eqnarray*}
F_\square(\eps_{\il}^X) \circ \varphi_{\il\il}^X
& = &
F_\square(\eps_{\il}^X) \circ \varphi^{\il_!,\;\il^*X} \circ
\il_![(\varphi_\il^X)^{-1}] \\
& = &
F_\square(\eps_{\il}^X) \circ
\eps_\il^{F_\square\il_!\il^*X} \circ
\il_!(\varphi_\il^{\il_!\il^*X}) \circ
\il_!F_\lefthalfcap(\eta_\il^{\il^*X}) \circ
\il_![(\varphi_\il^X)^{-1}] \\
& = &
\eps_{\il}^{F_\square X} \circ
\il_!\il^* F_\square(\eps_\il^{X}) \circ
\il_!(\varphi_\il^{\il_!\il^*X}) \circ
\il_!F_\lefthalfcap(\eta_\il^{\il^*X}) \circ
\il_![(\varphi_\il^X)^{-1}] \\
& = &
\eps_{\il}^{F_\square X} \circ
\il_!(\varphi_\il^{X}) \circ
\il_!F_\lefthalfcap \il^*(\eps_\il^{X}) \circ
\il_!F_\lefthalfcap (\eta_\il^{\il^*X}) \circ
\il_![(\varphi_\il^X)^{-1}] \\
& = &
\eps_\il^{F_\square X} .
\end{eqnarray*}
Let us consider an extension of this factorization to a morphism
of distinguished triangles of $\T(\boxempty)$
\[
\xymatrix{
\il_!\il^* F_\square X \ar[d]_{\varphi_{\il\il}^X}
\ar[r]^<>(0.5){\varepsilon^{F_\square}} & F_\square X
\ar@{=}[d]^\id \ar[r]^<>(0.5){\eta^{F_\square}} &
{i_{11}}_!{i_{11}}^? F_\square X
\ar@{-->}[d]^{\psi_\square^X}
\ar[r]^<>(0.5){\partial^{F_\square}} &
\Sigma_\square \il_!\il^* F_\square X
\ar[d]^{\Sigma \varphi_{\il\il}^X} \\
F_\square \il_!\il^* X \ar[r]^<>(0.5){F_\square \varepsilon} &
F_\square X \ar[r]^<>(0.5){F_\square \eta} &
F_\square {i_{11}}_!{i_{11}}^? X \ar[r]^<>(0.5){w} &
\Sigma_\square F_\square \il_!\il^* X .
}
\]
The extension morphism $\psi_\square^X$ is {\em unique} making
commute the central square because the group
\begin{eqnarray*}
\Hom(\Sigma_\square \il_!\il^* F_\square X ,
F_\square {i_{11}}_!{i_{11}}^? X)
& = &
\Hom(\Sigma_\square \il_!\il^* F_\square X ,
{i_{11}}_!{i_{11}}^? F_\square X) \\
& = &
\Hom(\il_! \Sigma_\lefthalfcap \il^* F_\square X ,
{i_{11}}_!{i_{11}}^? F_\square X) \\
& = &
\Hom(\Sigma_\lefthalfcap \il^* F_\square X ,
\il^*{i_{11}}_!{i_{11}}^? F_\square X) \\
& = &
0
\end{eqnarray*}
is canonically trivial. Here, we use the canonical isomorphism
$\psi_{i_{11}i_{11}}^X$ in the first equality, the canonical
invertible natural transformation $\delta_\lefthalfcap : \il_! \Sigma_\lefthalfcap \iso \Sigma_\square \il_!$ in the second
equality, adjunction in the third one and the canonical $2$-isomorphism
$\il^* {i_{11}}_! = 0$ of the recollement axioms in the last. Hence,
we must canonically identify $\psi_\square^X$ with the isomorphism
$\psi_{i_{11}i_{11}}^X$.
It follows that $\varphi_{\il\il}^X$ is invertible, too. Therefore,
$\eps_\il^X$ is invertible if and only if $\eps_\il^{F_\square X}$ is.

b) Since Axiom {\bf Der 7} holds for the derivator $\S$, it suffices
to show that, given any cocartesian (hence cartesian) square
$X \in \ul\ca(\boxempty)$ as in Definition \ref{def:extria}, part a),
whose diagram is
\begin{eqnarray*}
\begin{matrix}
\\\\\\
\dia_\square X
\end{matrix}
&
\begin{matrix}
\\\\\\
\mbox{=}
\end{matrix}
&
\xymatrix{
i_{00}^*X \ar@{ >->}[r] \ar@{->>}[d] &
i_{01}^*X \ar@{->>}[d] \\
i_{10}^*X \ar@{ >->}[r] & i_{11}^*X
}
\begin{matrix}
\\\\\\
\mbox{,}
\end{matrix}
\end{eqnarray*}
the adjunction morphism
\[
\eps^{F_\square X} : \il_!\il^* F_\square X \to F_\square X
\]
is invertible.

Let us start with a slightly more general object $X$ in
$\ul\ca(\boxempty)$, whose diagram is a commutative square with
inflations as horizontal arrows and deflations as vertical
arrows, which is not required to be cartesian, nor cocartesian.
Let us suppose that the adjoint morphism
$\eps^X : \il_!\il^*X \to X$ is a deflation. This is not
restrictive. Indeed, it occurs under the mild additional
hypothesis that the exact categories we are considering have
split idempotents, or, which is the case we will be dealing
with, when the square object $X$ is bicartesian. In fact,
the object ${i_{11}}^*\il_!\il^*X$ is the homotopy colimit
$\hocolim_\lefthalfcap \il^*X$.

Let us explain this, in general. Let us consider the category
$11 \backslash \lefthalfcap$, defined by the inclusion
$i_\lefthalfcap : \lefthalfcap \to \boxempty$. There is a
(noncommutative) square
\[
\xymatrix{
11 \backslash \lefthalfcap \ar[r]_<>(0.4)j^<>(0.4)\simeq
\ar[d]_{p_{11 \backslash \lefthalfcap}} &
\lefthalfcap \ar[d]^{i_\lefthalfcap} \ar[dl]^{p_\lefthalfcap} \\
e \ar[r]_{i_{11,\square}} & \; \boxempty & \ko
}
\]
where the forgetful functor $j$ is an isomorphism of categories.
By Axiom {\bf Der 4}, we find a functorial invertible natural
transformation of functors
\[
c'_{11 \backslash \lefthalfcap} : (p_\lefthalfcap)_! =
(p_{11 \backslash \lefthalfcap})_! j^* \Iso
i_{11,\square}^* i_{\lefthalfcap !} \;\; .
\]
Moreover, it is not difficult to see that there is a
{\em canonical} iso
\[
i_{01,\,\lefthalfcap}^*Z \coprod_{i_{00,\,\lefthalfcap}^*Z}
i_{10,\,\lefthalfcap}^*Z \iso {p_\lefthalfcap}_!Z \ko
\]
for any object $Z$ in $\ul\ca(\lefthalfcap)$.
Thus, by the uniqueness of push-outs in exact categories it
follows that the (unique) universal morphism ${i_{11}}^*(\eps^X)$
is a deflation, if the exact category $\ul\ca(e) = \ca$ has
splitting idempotents, or even an iso, if the the square is
supposed to be cocartesian.

Let us consider the conflation induced by the deflation
$\eps^X$,
\[
\xymatrix{
\Omega {i_{11}}_!{i_{11}}^? \ar@{ >->}[r] &
\il_!\il^*X \ar@{->>}[r] & X \ko
}
\]
and apply the {\em weak} $\partial$-functor $F_\square$.
After shifting, we get a distinguished triangle
\[
\xymatrix{
F_\square \il_!\il^*X \ar[r]^<>(0.5){} &
F_\square X \ar[r]^<>(0.5){} &
F_\square {i_{11}}_!{i_{11}}^?X \ar[r]^<>(0.5){} &
\Sigma_\square F_\square \il_!\il^*X \ko
}
\]
The proof goes on along the same lines of the proof of
item a), with the difference that this time we will apply
lemma \ref{lem:exdiagramtimesarrow} anytime we were
applying lemma \ref{lem:diagramtimesarrow}.
\end{proof}

\section{The Universal Property} \label{s:main}
In this section we give a proof of the main theorem \ref{thm:main}.
Let us put together some trivial observations at first.

\begin{lemma} \label{lemma:lemma1}
Let $\ul\ca$ be the represented exact prederivator associated to
an exact category $\ca$. Let $\E$ be a triangulated) derivator.
Fix an arbitrary diagram $I$ in $\Dia_\f$. Then, the sequence of
exact (\resp, triangulated) categories defined by
\[
\ul\ca^I_n := \ul\ca(C_n \times I) \ko \quad
\sfe^I_n := \E(C_n \times I) \ko n \in \N \ko
\]
with the induced functors defined by
\[
u_I^* := (u \times \id_I)^* : \ul\ca^I_n \to \ul\ca^I_m \ko \quad
u_I^* := (u \times \id_I)^* : \sfe^I_n \to \sfe^I_m \ko
\]
for any morphism $u : C_m \to C_n$ in $\Cubes$, and the natural
transformations defined by
\[
\alpha_I^* := (\alpha \times \id_I)^* : v_I^* \Rightarrow u_I^* \ko
\]
for any $2$-morphism $\alpha : u \Rightarrow v$ in $\Cubes$,
give rise to an exact (\resp, triangulated) tower $\sfe^I$
(\resp, $\ul\ca^I$) such that $\sfe^I(C_n) = \sfe^I_n$ (\resp,
$\ul\ca^I_n = \ul\ca^I(C_n)$).
\end{lemma}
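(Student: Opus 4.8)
The plan is to recognise the whole structure as a composite of two $2$-functors. First I would note that forming the product with $I$ is a strict $2$-endofunctor $(-)\times I : \Cat\to\Cat$, sending a small category $J$ to $J\times I$, a functor $u$ to $u\times\id_I$, and a natural transformation $\alpha$ to $\alpha\times\id_I$. Since $I$ lies in $\Dia_\f$ and $\Dia_\f$ is closed under finite products (the nerve of a product being the product of the nerves, so a product of two categories with finitely many non-degenerate simplices in their nerves again has this property), every cube $C_n\times I$ again lies in $\Dia_\f$, and this endofunctor restricts to a strict $2$-functor $(-)\times I : \Cubes\to\Dia_\f$. Here one needs only that the morphisms of $\Cubes$ are composites of the face maps $i^j_\varepsilon$ and the projections $p^j$, which are honest functors of categories, so that $u\times\id_I$ is a bona fide morphism of $\Dia_\f$ even though, as the text notes, $\Cubes$ is not a full sub-$2$-category of $\Dia_\f$.

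Next I would compose this with $\E$ (\resp, with $\ul\ca$), regarded as $2$-functors $\Dia_\f^\circ\to\CAT$, obtaining a $2$-functor
\[
\sfe^I:=\E\bigl((-)\times I\bigr):\Cubes^\circ\lra\CAT
\]
(\resp, $\ul\ca^I:=\ul\ca((-)\times I):\Cubes^\circ\to\CAT$), whose value at $C_n$ is $\E(C_n\times I)=\sfe^I_n$ (\resp, $\ul\ca(C_n\times I)=\ul\ca^I_n$), whose transition functor along $u : C_m\to C_n$ is $u_I^*=(u\times\id_I)^*$, and whose structure $2$-cell on $\alpha$ is $\alpha_I^*=(\alpha\times\id_I)^*$. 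All the coherence identities demanded of a tower of categories are then inherited verbatim from the $2$-functoriality of $\E$ (\resp, $\ul\ca$) and the strictness of $(-)\times I$, so nothing here has to be verified by hand.

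It then remains to install the extra structure. For $\sfe^I$ I would invoke Theorem \ref{thm:Malts}: since $\E$ is a triangulated derivator and every $C_n\times I$ lies in $\Dia_\f$, each $\sfe^I_n=\E(C_n\times I)$ carries a canonical triangulated structure and every transition functor $u_I^*=(u\times\id_I)^*$ is canonically a triangulated functor, so $\sfe^I$ is a tower of triangulated categories. For $\ul\ca^I$ I would use item c) of Example \ref{ex:exact cats}, by which $\ul\ca^I_n=\ul\Hom\bigl((C_n\times I)^\circ,\ca\bigr)$ is an exact category for the argumentwise conflations, together with the observation that $u_I^*$, being precomposition with $(u\times\id_I)^\circ$, visibly carries argumentwise conflations to argumentwise conflations and is hence an exact functor; thus $\ul\ca^I$ is a tower of exact categories. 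By construction $\sfe^I(C_n)=\sfe^I_n$ and $\ul\ca^I(C_n)=\ul\ca^I_n$, which settles the remaining assertion.

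I do not expect a genuine obstacle: this is exactly the ``trivial observation'' announced just before the lemma, and its whole content is the functoriality bookkeeping above. The only two points that merit a sentence are the stability of $\Dia_\f$ under finite products --- needed only so that $\E$, and hence Theorem \ref{thm:Malts}, can be applied to the objects $C_n\times I$ --- and the remark that the generating morphisms of $\Cubes$ are plain functors of categories, which is what guarantees that passing through $(-)\times I$ never leaves $\Dia_\f$ and that the failure of $\Cubes$ to be full in $\Dia_\f$ is harmless.
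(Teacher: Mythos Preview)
Your proposal is correct and is essentially the same approach as the paper's, which simply states that ``it reduces to a straightforward checking that the axioms of an exact (\resp, triangulated) tower hold for $\sfe^I$.'' You have merely unpacked this one-line proof by organising the verification as a composite of the strict $2$-functor $(-)\times I:\Cubes\to\Dia_\f$ with the given (pre)derivator, and then invoking Theorem~\ref{thm:Malts} and Example~\ref{ex:exact cats} for the triangulated and exact structures respectively; this is exactly the bookkeeping the author leaves implicit.
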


\begin{proof}
It reduces to a straightforward checking that the axioms of an exact
(\resp, triangulated) tower hold for $\sfe^I$.
\end{proof}

\begin{lemma} \label{lemma:lemma2}
Let $F : \D \to \E$ be a triangulated morphism of triangulated
derivators. Fix an arbitrary diagram $I$ in $\Dia_\f$.
Then, the sequence of associated triangulated functors defined by
\[
F^I_n := F_{C_n \times I} : \sfd^I_n \to \sfe^I_n \ko n \in \N \ko
\]
with the induced invertible natural transformations of functors defined by
\[
\varphi^I_u := \varphi_{u \times \id_I} : F^I_m u_I^* \iso u_I^* F^I_n \ko
\]
for any morphism $u : C_m \to C_n$ in $\Cubes$, gives rise to a
morphism of triangulated towers $F^I : \sfd^I \to \sfe^I$ such that
$F^I_n = F_{C_n \times I}$.
\end{lemma}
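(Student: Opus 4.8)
The proof amounts to transporting, along the $2$-functor $-\times\id_I : \Cubes \to \Dia_\f$, the structure carried by $F$, $\D$ and $\E$, and then invoking Proposition~\ref{prop:rightdef}. Throughout, $\sfd^I$ and $\sfe^I$ denote the triangulated towers obtained by applying Lemma~\ref{lemma:lemma1} to $\D$ and to $\E$.

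First I would record the routine fact that $-\times\id_I$ is indeed a $2$-functor into $\Dia_\f$: for every $C_n$ in $\Cubes$ the product $C_n\times I$ is again a finite directed diagram (the nerve of a product being the product of the nerves, it has only finitely many non-degenerate simplices); for every morphism $u : C_m\to C_n$ of $\Cubes$ the functor $u\times\id_I$ is a morphism of $\Dia_\f$; and for every $2$-morphism $\alpha : u\Rightarrow v$ of $\Cubes$ the whiskered transformation $\alpha\times\id_I$ is a $2$-morphism of $\Dia_\f$. (That $\Cubes$ is not a full sub-$2$-category of $\Dia_\f$ is irrelevant here: we are precomposing with a $2$-functor, not restricting along an inclusion.) Consequently $u_I^* = (u\times\id_I)^*$, the transformations $\varphi^I_u = \varphi_{u\times\id_I}$ --- invertible because $\varphi_{u\times\id_I}$ is, by the definition of a morphism of derivators --- and $\alpha^*_I = (\alpha\times\id_I)^*$ arise by evaluating the data of $\D$, $\E$ and $F$ on the images of $-\times\id_I$. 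The three identities defining a morphism of towers, namely $\varphi^I_{\id_{C_n}} = \id_{F^I_n}$, the cocycle relation $\varphi^I_{uv} = (v_I^*\varphi^I_u)(\varphi^I_v u_I^*)$ for $C_l\stackrel{v}{\to}C_m\stackrel{u}{\to}C_n$, and $\varphi^I_u\alpha^*_I = \alpha^*_I\varphi^I_v$, then follow word for word from the respective clauses of Definition~\ref{def:dermor} applied to $\id_{C_n}\times\id_I = \id_{C_n\times I}$, to $(u\times\id_I)(v\times\id_I) = (uv)\times\id_I$, and to $\alpha\times\id_I$. This already exhibits $(F^I_n,\varphi^I_u)$ as a morphism of \emph{additive} towers $\sfd^I\to\sfe^I$.

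It remains to upgrade this to a morphism of \emph{triangulated} towers. For each $n$, the local functor $F^I_n = F_{C_n\times I}$ is a triangulated functor for the canonical triangulated structures on $\D(C_n\times I) = \sfd^I_n$ and $\E(C_n\times I) = \sfe^I_n$ --- the very structures, by Theorem~\ref{thm:Malts}, that make $\sfd^I$ and $\sfe^I$ triangulated towers in Lemma~\ref{lemma:lemma1} --- and this is exactly Proposition~\ref{prop:rightdef}, item~b), which moreover furnishes the canonical $2$-isomorphism $\delta_{C_n\times I} : F_{C_n\times I}\Sigma_{C_n\times I}\iso\Sigma_{C_n\times I}F_{C_n\times I}$. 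Finally one must check that each $\varphi^I_u$ is a $2$-morphism of triangulated functors, not merely of additive ones; since both $u_I^*$ and the family $(F^I_n)_n$ come equipped with their canonical triangulated structures (Theorem~\ref{thm:Malts} and Proposition~\ref{prop:rightdef}), this reduces to the compatibility of $\varphi_{u\times\id_I}$ with the associated suspension data. I expect this to be the only point demanding any attention; it is, however, forced rather than computed, since by the uniqueness of connecting morphisms built into Proposition~\ref{prop:rightdef} (together with the $2$-naturality of the canonical triangulations recorded after Theorem~\ref{thm:Malts}) the relevant comparison square commutes automatically. With this, $F^I : \sfd^I\to\sfe^I$ is a morphism of triangulated towers with $F^I_n = F_{C_n\times I}$, as required.
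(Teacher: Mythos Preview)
Your proposal is correct and follows the same approach as the paper, only spelled out in much greater detail: the paper's own proof consists of the single sentence ``Again, it is straightforward to check that the axioms of a triangulated morphism of towers hold for $F^I$.'' Your use of Proposition~\ref{prop:rightdef} to certify that each $F^I_n$ is canonically triangulated, and your discussion of the compatibility of $\varphi^I_u$ with the suspension data, simply make explicit what the paper leaves to the reader.
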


\begin{proof}
Again, it is straightforward to check that the axioms of a
triangulated morphism of towers hold for $F^I$.
\end{proof}

Clearly, an analogous version of the last lemma holds for exact
morphisms of represented exact prederivators associated to
additive categories.
Let $I$ be any diagram of $\Dia_\f$. Let us associate an additive
(\resp, exact) tower $\ul{\sfa^I}$ with any additive (\resp, exact)
category $\ca$, according to the following definition
\[
\ul{\sfa^I}(C_n) := \ul\Hom(C_n^\circ, \ul\Hom(I^\circ, \ca)) \ko
n \in \N \; .
\]

\begin{lemma} \label{lemma:lemma3}
Let $\ca$ be an exact category and $\E$ a triangulated derivator.
Suppose that $F : \ul\ca \to \E$ is an exact morphism of prederivators.
Fix an arbitrary diagram $I$ in $\Dia_\f$. Then, there are an exact
tower $\ul{\sfa^I}$, a triangulated tower $\sfe^I$ and an induced
exact morphism of towers
\[
F^I : \ul{\sfa^I} \to \sfe^I
\]
where $\ul{\sfa^I}$ equals $\ul\ca^I$ (according to the notations
of Lemma \ref{lemma:lemma1}).
\end{lemma}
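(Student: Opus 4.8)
The plan is to deduce the statement directly from Lemma~\ref{lemma:lemma1}, the exact analogue of Lemma~\ref{lemma:lemma2}, and Proposition~\ref{prop:rightdef}; the only genuine point is the bookkeeping that identifies the two descriptions of $\ul{\sfa^I}$ as exact categories. First I would record the currying identification: for every $n \in \N$ the exponential adjunction in $\Cat$ gives
\[
\ul\ca(C_n \times I) = \ul\Hom\bigl((C_n \times I)^\circ, \ca\bigr)
= \ul\Hom\bigl(C_n^\circ \times I^\circ, \ca\bigr)
\cong \ul\Hom\bigl(C_n^\circ, \ul\Hom(I^\circ, \ca)\bigr) = \ul{\sfa^I}(C_n).
\]
A sequence in $\ul\ca(C_n \times I)$ is a conflation precisely when it is a conflation of $\ca$ after evaluation at each object $(c,i)$ of $C_n \times I$, and, after unravelling, the same holds for the right-hand term; hence the displayed identification is an isomorphism of exact categories, manifestly compatible with the structure functors $u_I^* = (u \times \id_I)^*$ for $u : C_m \to C_n$ in $\Cubes$ and with the $2$-morphisms $\alpha_I^* = (\alpha \times \id_I)^*$. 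Endowing the categories $\ul\ca(C_n \times I)$ with the exact tower structure $\ul\ca^I$ supplied by Lemma~\ref{lemma:lemma1} applied to $\ca$, we thus obtain an equality $\ul{\sfa^I} = \ul\ca^I$ of exact towers; in particular $\ul{\sfa^I}$ is an exact tower. Applying Lemma~\ref{lemma:lemma1} to the triangulated derivator $\E$ produces the triangulated tower $\sfe^I$ with $\sfe^I(C_n) = \E(C_n \times I)$.

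Next I would build the morphism, exactly as in the exact analogue of Lemma~\ref{lemma:lemma2}: set $F^I_n := F_{C_n \times I} : \ul\ca(C_n \times I) \to \E(C_n \times I)$ and $\varphi^I_u := \varphi_{u \times \id_I} : F^I_m u_I^* \iso u_I^* F^I_n$ for each $u : C_m \to C_n$ in $\Cubes$. The three axioms of a morphism of towers (normalisation at identities, compatibility with composition of the $u$'s, compatibility with the $2$-morphisms $\alpha$) are obtained by restricting the corresponding axioms for the morphism of derivators $F$ along the functors $u \times \id_I$ and the natural transformations $\alpha \times \id_I$, all of which are $1$- and $2$-morphisms of $\Dia_\f$ since $C_n \times I$ lies in $\Dia_\f$; this is the same straightforward verification as in Lemma~\ref{lemma:lemma2}, now in the exact setting.

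Finally I would check that $F^I$ is \emph{exact}. By Proposition~\ref{prop:rightdef}(a), since $F : \ul\ca \to \E$ is a $\partial$-morphism and each $C_n \times I$ belongs to $\Dia_\f$, every local functor $F^I_n = F_{C_n \times I}$ carries a canonical structure of $\partial$-functor with respect to the exact structure of $\ul\ca(C_n \times I)$ and the canonical triangulated structure of $\E(C_n \times I)$. By the definition of an exact morphism of towers this is all that is required, so $F^I : \ul{\sfa^I} = \ul\ca^I \to \sfe^I$ is an exact morphism of towers. The only mildly delicate step is the identification of exact structures under currying used in the first paragraph; the compatibility of the canonical $\partial$-structures of Proposition~\ref{prop:rightdef}(a) on the various $F_{C_n \times I}$ with the transition functors $u_I^*$ is automatic from the naturality built into that proposition, and I expect no real obstacle here.
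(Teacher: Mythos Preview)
Your proposal is correct and follows essentially the same approach as the paper: the currying identification $\ul\ca^I(C_n) = \ul{\sfa^I}(C_n)$ is exactly the computation the paper gives, after which the paper simply says ``Now use Lemma~\ref{lemma:lemma1} and Lemma~\ref{lemma:lemma2}'' (together with the remark just before Lemma~\ref{lemma:lemma3} that an exact analogue of Lemma~\ref{lemma:lemma2} holds). Your explicit appeal to Proposition~\ref{prop:rightdef}(a) to equip each $F_{C_n\times I}$ with a canonical $\partial$-functor structure is precisely what that ``exact analogue'' amounts to, so you have unpacked in detail what the paper leaves terse.
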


\begin{proof}
For each $n \in \N$, we have
\begin{eqnarray*}
\ul\ca^I(C_n) & = & \ul\ca(C_n \times I) \\
      & = & \ul\Hom((C_n \times I)^\circ, \ca) \\
      & = & \ul\Hom(C_n^\circ \times I^\circ, \ca) \\
      & = & \ul\Hom(C_n^\circ, \ul\Hom(I^\circ, \ca)) \\
      & = & \ul{\sfa^I}(C_n) \; .
\end{eqnarray*}
Now use Lemma \ref{lemma:lemma1} and Lemma \ref{lemma:lemma2}.
\end{proof}

We can now prove the main Theorem \ref{thm:main}.

\begin{proof}
Let us consider an exact morphism of prederivators $F : \ul\ca \to \E$.
We functorially construct an object $\tilde F$ lying in
$\ul\HOM_{tr}(\D_\ca,\E)$ such that
$can^*(\tilde F) = \tilde F \circ can$ is $F$ (up to a unique
iso). By Lemma \ref{lemma:lemma3}, if we fix a diagram $I$ we have
the induced exact morphism of towers $F^I : \ul{\sfa^I} \to \sfe^I$.
This is an object in the category $\ul\HOM_{ex}(\ul{\sfa^I} \ko \sfe^I)$.
By Keller's theorem \ref{thm:kellerthm}, this morphism extends uniquely
to a morphism of triangulated towers
$\tilde{F^I} : \sfd_{\ca^I} \to \sfe^I$. Here $\sfd_{\ca^I}$ is a
tower that associates the bounded derived category
$\cd^b(\ul\Hom(C_n^\circ, \ul\Hom(I^\circ, \ca))$ with any cube $C_n$
and equals the tower $\sfd_\ca^{\;\,I}$ which is defined by the relation
$\sfd_\ca^{\;\,I} (C_n) = \cd^b(\ul\Hom((C_n \times I)^\circ, \ca))$.
Thus, the morphism $\tilde{F^I} : \sfd_{\ca^I} \to \sfe^I$ identifies
with a morphism $\tilde{F}^I : \sfd_\ca^{\;\,I} \to \sfe^I$. The {\em base}
of the morphism $\tilde{F}^I$ (\ie, the evaluation at $C_0$) gives us
a triangulated functor $(\tilde{F}^I)_0 : \D_\ca(I) \to \E(I)$.

It remains to check that, by letting $I$ running in $\Dia_\f$, we get
a triangulated morphism of derivators $\tilde{F} : \D_\ca \to \E$,
defined by the equality $\tilde{F}_I := (\tilde{F}^I)_0$, whose image
in $\ul\HOM_{ex}(\ul\ca,\E)$ is $F$. Notice that, by the construction,
the composition $\tilde{F}_I \circ can_I$ is $F_I$, for all diagrams
$I$ (up to a canonical iso).
So, we can start with the second item in Definition \ref{def:dermor}.
Let $u : I \to J$ be a morphism of finite diagrams. Since $F$
is a morphism of prederivators, there is a square
\[
\xymatrix{
\ul\ca(J) \ar[r]^{u^*} \ar[d]_{F_J} & \ul\ca(I) \ar[d]^{F_I} \\
\E(J) \ar[r]_{u^*} & \E(I) & \ko
}
\]
which is commutative (up to a unique iso) by the definition, \ie,
there is an invertible natural transformation of functors
$\varphi_u : F_I u^* \iso u^* F_J$. Now, the previous square is
(canonically isomorphic to) the base of the following commutative
square of towers
\[
\xymatrix{
\ul{\sfa^J} \ar[r]^{{u^*}^\wedge} \ar[d]_{F^J} & \ul{\sfa^I} \ar[d]^{F^I} \\
\sfe^J \ar[r]_{{u^*}^\wedge} & \sfe^I & \ko
}
\]
where ${u^*}^\wedge$ is the induced exact morphism of towers
$\ul\Hom((-)^\circ,u^*)$. By Theorem \ref{thm:kellerthm} the
morphisms $F^J$ and $F^I$ uniquely extend to morphisms
$\tilde{F^J}$ and $\tilde{F^I}$.
Since the morphism ${u^*}^\wedge$ is exact it extend to the
derived morphism $\R {u^*}^\wedge$. So, we get a square of
triangulated towers
\[
\xymatrix{
\sfd_{\ca^J} \ar[r]^{\R {u^*}^\wedge} \ar[d]_{\tilde{F^J}} &
\sfd_{\ca^I} \ar[d]^{\tilde{F^I}} \\
\sfe^J \ar[r]_{{u^*}^\wedge} & \sfe^I & \ko
}
\]
which is equal to the following commutative square
\[
\xymatrix{
\sfd_{\ca}^{\;\,J} \ar[r]^{\R {u^*}^\wedge} \ar[d]_{\tilde{F}^J} &
\sfd_{\ca}^{\;\,I} \ar[d]^{\tilde{F}^I} \\
\sfe^J \ar[r]_{{u^*}^\wedge} & \sfe^I & \ko
}
\]
whose base (evaluation at $C_0$) is a square of triangulated categories
\[
\xymatrix{
\D_\ca(J) \ar[rr]^{u^* := \R u^*} \ar[d]_{\tilde{F}_J} &&
\D_\ca(I) \ar[d]^{\tilde{F}_I} \\
\E(J) \ar[rr]_{u^*} && \E(I) & .
}
\]
The functor $\R u^*$ exists since $u^* : \ul\ca(J) \to \ul\ca(I)$
is an exact functor of exact categories.

It remains to check that this square is commutative up to a unique
iso. Let $\tilde\varphi_u : \tilde{F}_I \R u^* \to u^* \tilde{F}_J$
be the natural transformation of functors induced by $\varphi_u$ via
the fully faithful functor $can_J$. We apply item b) in \cite[Lemme
2]{Keller07}, where we identify the functor $G$ with $\tilde{F}_I \R u^*$,
the functor $G'$ with $u^* \tilde{F}_J$ and the class of objects
$\mathcal{X}$ with the set of objects of the category $\ul\ca(J)$.
Indeed, the image of the subcategory $\ul\ca(J)$ generates $\D_\ca(J)$
as a triangulated category. Since the restriction of the natural
transformation $\tilde\varphi_u$ to the evident restriction
subfunctors over the subcategory $\ul\ca(J)$ equals the invertible
natural transformation $\varphi_u$, it follows by the cited Lemma
that $\tilde\varphi_u : \tilde{F}_I \R u^* \to u^* \tilde{F}_J$ is
an invertible natural transformation of functors, too. It is then
clear how to check the remaining axioms of a morphism of derivators.

The additive morphism $\tilde{F}$ extends to a triangulated morphism.
Indeed, the functor $\tilde{F}_I$ is triangulated with respect to
the canonical triangulated structures of the categories $\D_\ca(I)$
and $\E(I)$, for all diagrams $I$. Therefore, we can apply the item
a) of Prop. \ref{prop:redundancy}, which (largely) suffices to tell
us that the morphism $\tilde{F}$ is triangulated.

It is clear that the functor $F \mapsto \tilde{F}$ is a quasi-inverse
to the functor $F \mapsto F \circ can$ since this is locally true
for every diagram $I$.
\end{proof}


\section{Derived extension of an exact category} \label{s:cor}
In this section we work with derivators of type $\Dia_\f$, \ie,
derivators which are defined over the $2$-subcategory of $\Dia$ that
consists of finite directed diagrams.

Let us denote by $\Mor_{I^\circ}(\T(e))$ the category of
{\em $I^\circ$-morphisms} in $\T(e)$, \ie, an object in this category
is a family of morphisms of $\T(e)$
\[
\{ F_{\sigma(u),\tau(u)} : F_{\sigma(u)} \to
F_{\tau(u)} \}_{u \in \Mor(I^\circ)} \quad \in \quad
\prod_{j \in I} \,\, \prod_{i \in I} \prod_{I(j,i)}
\Hom_{\T(e)}(F_i, F_j)
\]
indexed over all arrows in $I^\circ$, with the convention that
$F_{\id_i} = \id_{F_i}$ for all $i \in I$. (It is possible that
some morphism in the diagram is zero if in the category $I$
there are objects with no arrows in between.) A morphism in
$\Mor_{I^\circ}(\T(e))$ is given by a family of morphisms
\[
\{ g_i : F_i \to G_i \}_{i \in I} \quad \in \quad
\prod_{i \in I} \Hom_{\T(e)}(F_i, G_i)
\]
such that the square
\[
\xymatrix{
F_{\sigma(u)} \ar[rr]^{F_{\sigma(u),\tau(u)}}
\ar[d]_{g_{\sigma(u)}} && F_{\tau(u)} \ar[d]^{g_{\tau(u)}} \\
G_{\sigma(u)} \ar[rr]_{G_{\sigma(u),\tau(u)}} && \; G_{\tau(u)}
}
\]
is commutative, for each arrow $u \in {\Mor(I^\circ)}$.
It is clear that there is a natural isomorphism of categories
\[
\ul\Hom(I^\circ,\T(e)) \xymatrix{\ar[r]^\simeq &}
\Mor_{I^\circ}(\T(e)) \ko
\]
\[
\quad F \mapsto
\{F(u) : F({\sigma(u)}) \to F({\tau(u)}) \}_{u \in {\Mor(I^\circ)}}
\]
whose inverse is given by the functor which maps
$\{ F_{\sigma(u),\tau(u)} \}_{u \in \Mor(I^\circ)}$ to the presheaf
$F$ defined by $F(u) = F_{\sigma(u),\tau(u)}$, for all $u \in I^\circ$.
Let us consider the functor
\[
\T(I) \xymatrix{\ar[rr]^{\mor_{I}} &&} \Mor_{I^\circ}(\T(e)) \ko
\]
\[
X \mapsto
\{ \alpha^*_{\sigma(u),\tau(u)}X :
{\tau(u)}^*X \to {\sigma(u)}^*X \}_{u \in \Mor(I)} \ko
\]
where, for each arrow $u \in \Mor(I)$, the natural transformation
$\alpha^*_{\sigma(u),\tau(u)} : \tau(u)^* \to \sigma(u)^*$ is induced
by the $2$-arrow
\[
\alpha_{\sigma(u),\tau(u)} : \sigma(u) \Rightarrow \tau(u) \; .
\]
There is a commutative diagram
\[
\xymatrix{
\ul\Hom(I^\circ,\T(e)) \ar[r]^<>(0.5){\simeq} & \Mor_{I^\circ}(\T(e)) \\
\;\, \T(I) \ar[u]^{\dia_{I}} \ar[ur]_<>(0.5){\mor_{I}} .
}
\]
and a canonical bijection
\[
\Hom_{\ul\Hom(I^\circ,\T(e))}(\dia_I X, \dia_I Z)
\iso \Hom_{\Mor_{I^\circ}(\T(e))}(\mor_I X, \mor_I Z) .
\]

Let us prepare the crucial ingredients for the proof of the main
theorem of this section.

\subsection{Full faithfulness of the diagram functor} \label{ss:ff}
The following proposition gives a sufficient condition which allows
us to lift morphisms of presheaves {\em uniquely}, when this is
possible, from $\ul\Hom(I^\circ,\T(e))$ to morphisms of $\T(I)$.
This will be helpful in the main theorem of this section in order
to tell whether the functor $\dia_I$ is fully faithful when we
consider its restriction to some subcategory of $\T(I)$ which
enjoys this property (Toda condition).

\begin{proposition} \label{prop:fullfaith}
Let $\T$ be a triangulated derivator of type $\Dia_\f$.
Fix an arbitrary finite diagram $I$.
Let $X$ and $Z$ be objects lying in $\T(I)$. Suppose that the
(Toda) condition
\[
\Hom_{\T(e)}(\Sigma^n i^*X, j^*Z) = 0 \ko \quad n>0 ,
\]
holds for each pair $(i,j) \in I \times I$.
Then, the functor $\dia_I : \T(I) \to \ul\Hom(I^\circ,\T(e))$ induces
a bijection
\[
\xymatrix{
\Hom_{\T(I)}(X, Z) \ar[r]^<>(0.5)\sim &
\Hom_{\ul\Hom(I^\circ,\T(e))}(\dia_I X, \dia_I Z) .
}
\]
\end{proposition}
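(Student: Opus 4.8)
The plan is to argue by induction on the number of objects of $I$. If $I=e$ then $\dia_e$ is (isomorphic to) the identity, and $I=\emptyset$ is trivial by {\bf Der~1}, so assume $|I|\ge 2$. Pick a \emph{maximal} object $x_0\in I$, \ie\ one admitting no non-identity arrow out of it; such an object exists because $I$ has no oriented cycles. Put $U:=I\setminus\{x_0\}$, so $j\colon U\hookrightarrow I$ is an open immersion, $i\colon e\cong\{x_0\}\hookrightarrow I$ is the complementary closed immersion, and $U$ is again a finite directed diagram with fewer objects. Since $i_y^*\circ j^*=i_y^*$ for $y\in U$, the Toda hypothesis for $(X,Z)$ over $I$ restricts to the one for $(j^*X,j^*Z)$ over $U$, so by induction $\dia_U$ is bijective on $\Hom_{\T(U)}(j^*X,j^*Z)$. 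Now bring in the recollement of triangulated categories attached to this open/closed pair (the one used in the proof of Lemma~\ref{lem:shift}; \cf Definition~\ref{def:recollement} and \cite{CisinskiNeeman08}): for every $W\in\T(I)$ it gives a distinguished triangle $j_!j^*W\xrightarrow{\varepsilon_j^W}W\xrightarrow{\eta_i^W}i_*i^*W\xrightarrow{\partial_W}\Sigma j_!j^*W$ together with the companion localization triangle $i_*i^!W\to W\to j_*j^*W\to\Sigma i_*i^!W$, and we recall $i^*j_!=0$, $j^*j_!\cong\id$, $i^*i_*\cong\id$, and that $i^!,j_!$ exist by pointedness. Set $L(W):=(j_*j^*W)_{x_0}$, which by {\bf Der~4} is the homotopy limit $\holim_{U/x_0}\bigl((j^*W)|_{U/x_0}\bigr)$ over the comma category $U/x_0$ (a finite directed diagram, hence in $\Dia_\f$) of $(y,f)\mapsto W_y$. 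Restricting the companion triangle to $x_0$ yields a distinguished triangle $i^!W\xrightarrow{\kappa_W}W_{x_0}\to L(W)\to\Sigma i^!W$, and restricting the first triangle of $j_!V$ to $x_0$ (where $(j_!V)_{x_0}=0$) yields a natural isomorphism $\Sigma\,i^!j_!V\cong(j_*V)_{x_0}$, hence a natural isomorphism $\Hom_{\T(I)}(i_*i^*A,\Sigma j_!j^*Z)\cong\Hom_{\T(e)}(A_{x_0},L(Z))$. Finally $L(Z)$ is a \emph{finite} homotopy limit of the objects $Z_y$, $y\in U$, so it lies in the triangulated subcategory of $\T(e)$ generated by finitely many desuspensions $\Sigma^{-m}Z_y$ with $m\ge 0$; therefore $\Sigma^{-1}L(Z)$ is generated by such desuspensions with $m\ge 1$, and since $\Hom_{\T(e)}(X_{x_0},\Sigma^{-m}Z_y)=\Hom_{\T(e)}(\Sigma^{m}X_{x_0},Z_y)=0$ for $m\ge 1$ by the Toda condition, we conclude $\Hom_{\T(e)}(X_{x_0},\Sigma^{-1}L(Z))=0$.

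\emph{Faithfulness.} Let $f\colon X\to Z$ have $\dia_I f=0$. Then $\dia_U(j^*f)=0$, so $j^*f=0$ by induction, so $f\circ\varepsilon_j^X=0$; applying $\Hom_{\T(I)}(-,Z)$ to the first triangle for $X$ gives $f=g\circ\eta_i^X$ for some $g\colon i_*i^*X\to Z$. Transport $g$ across $i_*\dashv i^!$ to $\bar g\colon X_{x_0}\to i^!Z$; a triangle identity (using $i^*i_*\cong\id$) identifies $f_{x_0}=i^*f$ with $\kappa_Z\circ\bar g$, so $\kappa_Z\circ\bar g=0$, whence $\bar g$ factors through the fibre $\Sigma^{-1}L(Z)\to i^!Z$ of $\kappa_Z$. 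As $\Hom_{\T(e)}(X_{x_0},\Sigma^{-1}L(Z))=0$, we get $\bar g=0$, hence $g=0$ and $f=0$.

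\emph{Fullness.} Given $\varphi\colon\dia_I X\to\dia_I Z$, its restriction $\varphi_U$ lifts, uniquely by induction, to $g_U\colon j^*X\to j^*Z$ in $\T(U)$ with $\dia_U g_U=\varphi_U$. One wants to extend the pair $\bigl(j_!(g_U),\,i_*\varphi_{x_0}\bigr)$ to a morphism of the two distinguished triangles of $X$ and $Z$; by the triangulated axioms such a $\tilde g\colon X\to Z$ exists once the connecting square commutes, \ie\ once
\[
\bigl(\Sigma j_!(g_U)\bigr)\circ\partial_X=\partial_Z\circ i_*\varphi_{x_0}\qquad\text{in }\ \Hom_{\T(I)}(i_*i^*X,\Sigma j_!j^*Z).
\]
Under the isomorphism of this group with $\Hom_{\T(e)}(X_{x_0},L(Z))$ above, the adjunct of $\partial_W$ is, up to canonical isomorphism, the {\bf Der~4} comparison map $\sigma_W\colon W_{x_0}\to L(W)=\holim_{U/x_0}$ induced by the structure maps $W_{x_0}\to W_y$, so the required identity becomes $L(g_U)\circ\sigma_X=\sigma_Z\circ\varphi_{x_0}$; this holds because $\varphi$ is a morphism of diagrams — its components at the objects $y\to x_0$ are compatible with the structure maps $X_{x_0}\to X_y$, and applying the functorial homotopy limit $\holim_{U/x_0}$ to that compatibility produces exactly this square, with $L(g_U)=\holim_{U/x_0}(\{\varphi_y\}_y)$ since $\dia_U g_U=\varphi_U$. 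Having produced $\tilde g$, one reads off from its two defining equations (using $i^*\eta_i\cong\id$, $i^*i_*\cong\id$ and the triangle identities for $j_!\dashv j^*$) that $i^*\tilde g=\varphi_{x_0}$ and $j^*\tilde g=g_U$; hence $\dia_I\tilde g$ has the same components as $\varphi$ at every object of $I=U\cup\{x_0\}$, so $\dia_I\tilde g=\varphi$. Together with faithfulness this gives the claimed bijection.

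The step I expect to be the main obstacle is the identification, in the fullness argument, of the adjunct of the recollement connecting morphism $\partial_W$ with the {\bf Der~4} comparison map $\sigma_W$, and the attendant reduction of the connecting-square compatibility to the bare fact that $\varphi$ is a morphism of diagrams. This is precisely the kind of $2$-categorical coherence — closely related to the `redundancy of the connecting morphism' of Section~\ref{s:epirec} — whose verification requires careful bookkeeping of adjunction units and counits, of the octahedron relating the three recollement triangles, and of the naturality of the base-change morphisms (it is here that one must make sure $U/x_0$ is a genuine object of $\Dia_\f$, so that {\bf Der~4} and the limit $\holim_{U/x_0}$ apply). A lesser technical point, used twice, is the structural fact that a finite homotopy limit of the $Z_y$ lies in the triangulated subcategory they generate together with their desuspensions up to the dimension of $U/x_0$; this is what turns the Toda vanishing into the vanishing of $\Hom_{\T(e)}(X_{x_0},\Sigma^{-1}L(Z))$.
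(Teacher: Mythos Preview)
Your approach is genuinely different from the paper's and is conceptually attractive: you argue by induction on $|I|$ via the open/closed recollement attached to a maximal object, whereas the paper builds a finite ``bar-type'' resolution
\[
\cdots \to QL^2X \to QLX \to QX \to X,\qquad QX:=\coprod_{i\in I} i_! i^*X,
\]
shows it terminates because $I$ is finite directed, and then applies $\Hom_{\T(I)}(-,Z)$ together with the adjunctions $i_!\dashv i^*$ to reduce everything to the Toda vanishing. The paper's argument is entirely elementary in this sense: no recollement, no identification of connecting morphisms, just explicit coproducts and a descending induction along the resolution. Your recollement approach trades that explicitness for a cleaner inductive structure, but at the cost of the coherence check you yourself flag.

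Your faithfulness argument is fine. The genuine gap is in fullness, and it is exactly the one you name: the assertion that under the isomorphism $\Hom_{\T(I)}(i_*i^*X,\Sigma j_!j^*Z)\cong\Hom_{\T(e)}(X_{x_0},L(Z))$ the adjunct of $\partial_W$ becomes the comparison map $\sigma_W=i^*(\eta_{j_*}^W)$. This is true, but it is not a one-line consequence of the recollement axioms; it requires relating the two localisation triangles (via the octahedron, or by tracking the isomorphism $\Sigma\, i^!j_!\cong i^*j_*$ back through the triangle $i_*i^!j_!V\to j_!V\to j_*V\to\Sigma i_*i^!j_!V$ and checking naturality in $V$ and compatibility with $\eta_{j_*}$). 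Without that verification, you have not shown that the connecting square commutes, hence not that $\tilde g$ exists. The paper's resolution approach avoids this entirely: because every term $QL^lX$ is a coproduct of objects $k_!(-)$, the map $\Hom_{\T(I)}(QL^lX,Z)\to\prod\Hom_{\T(e)}(-,k^*Z)$ is an \emph{isomorphism on the nose}, and fullness falls out of an explicit kernel computation with no coherence to manage. If you want to keep your approach, you must supply that identification (or, alternatively, bypass it by showing directly that the obstruction class of $g_U$ in $\Hom(X_{x_0},\Sigma i^!Z)$ factors through $\Sigma^{-1}L(Z)$ and hence vanishes --- but this again needs the same comparison). A secondary point: your claim that $L(Z)$ lies in the thick subcategory generated by the $\Sigma^{-m}Z_y$ is correct, but deserves a line of justification (induction on $|U/x_0|$ using homotopy pullbacks, for instance).
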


\begin{proof}
Let us endow the additive category $\T(e)$ with an exact structure
by defining conflations as the split short exact sequences. In this
way, once a diagram $I$ of $\Dia_\f$ has been fixed, we can
consider $\ul\Hom(I^\circ,\T(e))$ as an exact category with the
pointwise (split) exact structure.

Let $X$ be an arbitrary object in the triangulated category
$\T(I)$. For each arrow $a : j \to i$ in the
diagram $I$, let $a_i \in \Hom_{\T(I)}(j_!i^*X, i_!i^*X)$
be the image of the identity $\id^{i_!i^*X}$ under the
composition of homomorphisms
\begin{eqnarray*}
\Hom_{\T(I)}(i_!i^*X, i_!i^*X) \iso &
                 \Hom_{\T(e)}(i^*X, i^*i_!i^*X) \\
                 \to & \Hom_{\T(e)}(i^*X, j^*i_!i^*X) \\
                 \iso & \Hom_{\T(I)}(j_!i^*X, i_!i^*X)
\end{eqnarray*}
induced by the adjoint pairs $i_! \dashv i^*$ and $j_! \dashv j^*$
and the $2$-arrow $\alpha^*_{ji}$
\[
\xymatrix{\T(I) \rtwocell^{j^*}_{i^*}{^{\alpha^*_{ji}}} & \T(e)}
\]
associated with the natural transformation $\alpha_{ji}$
\[
\xymatrix{e \rtwocell^i_j{^{\alpha_{ji}}} & I}
\]
defined by the morphism $a$.

By using the formal properties of adjoint functors, we can find the
formula for $a_i$
\[
a_i = \varepsilon_j^{i_!i^*X} \circ j_![\alpha^*_{ji}(i_!i^*X)]
\circ j_!(\eta_i^{i^*X}) \ko
\]
where, for each $l \in I$ and $Y \in \T(I)$, the morphisms
$\varepsilon_l^Y$ (\resp, $\eta_l^Y$) are defined by using the counit
(\resp, the unit) of the adjunction $l_! \dashv l^*$.

Similarly, let $a_j \in \Hom_{\T(I)}(j_!i^*X, j_!j^*X)$ be the image
of the identity $\id^{j_!j^*X}$ under the composition of homomorphisms
\begin{eqnarray*}
\Hom_{\T(I)}(j_!j^*X, j_!j^*X) \iso &
                 \Hom_{\T(e)}(j^*X, j^*j_!j^*X) \\
                 \to & \Hom_{\T(e)}(i^*X, j^*j_!j^*X) \\
                 \iso & \Hom_{\T(I)}(j_!i^*X, j_!j^*X)
\end{eqnarray*}
induced by the adjoint pair $j_! \dashv j^*$ and the $2$-arrow
defined by the morphism $a$ as above. Here, it is easy to see that the
formula
\[
a_j = j_![\alpha_{ji}^*(X)]
\]
for $a_j$ holds. Thus, for each morphism
$a : j \to i$ in the diagram $I$, we get an arrow
\[
\xymatrix{
j_!i^*X \ar[rr]^<>(0.5){[a_i, -a_j]^t} &&
i_!i^*X \oplus j_!j^*X .
}
\]
We claim that the composition of morphisms
\[
\xymatrix{
j_!i^*X \ar[rr]^<>(0.5){[a_i, -a_j]^t} && i_!i^*X \oplus j_!j^*X
\ar[rr]^<>(0.5){[\varepsilon_i,\varepsilon_j]} && X
}
\]
is zero. Indeed, by $2$-functoriality of $\alpha_{ji}^*$ there is the
equality
\[
j^*(\varepsilon_i) \circ \alpha_{ji}^*(i_!i^*X) =
\alpha_{ji}^*(X) \circ i^*(\varepsilon_i) .
\]
Compose with the morphism $\eta_i^{i^*X}$ and use
$i^*(\varepsilon_i) \circ \eta_i^{i^*X} = \id^{i^*X}$ to find the
relation for $\alpha_{ji}^*$
\[
j^*(\varepsilon_i) \circ \alpha_{ji}^*(i_!i^*X) \circ \eta_i^{i^*X} =
\alpha_{ji}^*(X) .
\]
Hence, we can compute
\begin{eqnarray*}
[\varepsilon_i, \varepsilon_j] \circ [a_i, -a_j]^t & = &
\varepsilon_i \circ \varepsilon_j^{i_!i^*X} \circ
j_![\alpha^*_{ji}(i_!i^*X)] \circ j_!(\eta_i^{i^*X}) -
\varepsilon_j \circ j_![\alpha_{ji}^*(X)] \\
& = & \varepsilon_j \circ j_![j^*(\varepsilon_i)] \circ
j_![\alpha^*_{ji}(i_!i^*X)] \circ j_!(\eta_i^{i^*X}) -
\varepsilon_j \circ j_![\alpha_{ji}^*(X)] \\
& = & 0 .
\end{eqnarray*}
Here the second equality comes from the functoriality of $\varepsilon_j$
and the third by the relation for $\alpha_{ji}^*$.

Recall that the set of objects and the $\Hom$-sets in $I$ are
finite and that the triangulated category $\T(I)$ has finite
coproducts. So, we obtain a morphism in $\T(I)$
\[
\xymatrix{
\coprod_{I(j,i)} j_!i^*X \ar[rr] &&
i_!i^*X \oplus j_!j^*X \ar[rr]^<>(0.5){can} &&
\coprod_{i \in I} i_!i^*X \ko
}
\]
whose components are $[a_i, -a_j]^t$.
All these morphisms are the components of an arrow
\[
\xymatrix{
\coprod_{j \in I} \coprod_{i \in I - \{j\}} \coprod_{I(j,i)}
j_!i^*X \ar[rr]^<>(0.5){u} && \coprod_{i \in I} i_!i^*X .
}
\]
Now it is clear that the composition
\[
\xymatrix{
\coprod_{j \in I} \coprod_{i \in I - \{j\}} \coprod_{I(j,i)}
j_!i^*X \ar[rr]^<>(0.5){u} && \coprod_{i \in I} i_!i^*X
\ar[rr]^<>(0.5){\varepsilon} && X \ko
}
\]
where the morphism $\varepsilon$ is induced by the $\varepsilon_i$,
$i \in I$, still vanishes.

Let us call $QX$ the object $\coprod_{i \in I} i_!i^*X$ and consider a
distinguished triangle in $\T(I)$
\[
\xymatrix{
LX \ar[rr]^<>(0.5){v} && QX \ar[rr]^<>(0.5){\varepsilon} && X
\ar[rr]^<>(0.5){} && \Sigma LX .
}
\]
Notice that the morphism $k^*(v)$ is the kernel of $k^*(\varepsilon)$,
for any $k \in I$. Indeed, since the functor $k^*$ is triangulated, we
have a distinguished triangle in $\T(e)$
\[
\xymatrix{
k^*(LX) \ar[rr]^<>(0.5){k^*(v)} && \coprod_{i \in I} \coprod_{I(k,i)}
i^*X \ar[rr]^<>(0.5){k^*(\varepsilon)} && k^*X
\ar[rr]^<>(0.5){} && \Sigma k^*(LX) .
}
\]
The components of the morphism $k^*(\varepsilon)$ are
\[
\alpha^*_{ki}(X) : i^*X \to k^*X \ko
\]
where $\alpha_{ki}$ is the $2$-arrow
\[
\xymatrix{e \rtwocell^i_k{^{\alpha_{ki}}} & I}
\]
defined by a morphism $k \to i$ in the diagram category $I$.
Since in the diagram $I$ there are no nontrivial loops, we have that
$k^*(\varepsilon)$ is a section, with retraction given by a morphism
\[
\xymatrix{
k^*X \ar[rr]^<>(0.5){} && \coprod_{i \in I} \coprod_{I(k,i)} i^*X \ko
}
\]
all of whose components are zero but one, the identity on $k^*X$.
It follows that $k^*(v)$ is actually the kernel of $k^*(\varepsilon)$
and that the connecting morphism $\partial^{k^*X}$ must be zero.
Hence, we have that locally the kernel is given by the equality
\[
j^*(LX) = \coprod_{i \in I - \{j\}} \coprod_{I(j,i)} i^*X .
\]
Now it is clear that
\[
QLX = \coprod_{j \in I} j_!j^*LX = \coprod_{j \in I}
\coprod_{i \in I - \{j\}} \coprod_{I(j,i)} j_!i^*X
\]
is just the domain of the morphism $u$ which we have constructed
above.

Since we have seen that the composition  $\varepsilon \circ u$
vanishes, there exists an arrow $\varepsilon^1 : QLX \to LX$
such that, by composition with $v$, it gives $u$, as in the
following commutative diagram
{\footnotesize
\[
\xymatrix{
&& QLX \ar[d]^<>(0.5){u} \ar[dll]_{\varepsilon^1} \ar[drr]^0 \\
LX \ar[rr]^<>(0.5){v} && QX
\ar[rr]^<>(0.5){\varepsilon} && X \ar[rr]^<>(0.5){} && \Sigma LX .
}
\]
}
By iterating this construction we get a `resolution' as in the following
commutative diagram in $\T(I)$
\[
\xymatrix{
QL^nX \ar@{=}[dr]_<>(0.5){\varepsilon^n} \ar[rr]^<>(0.5){u^n} &&
QL^{n-1}X \ar[dr]_<>(0.5){\varepsilon^{n-1}} \ar[r]^<>(0.5){u^{n-1}} & \dots
\ar[r]^<>(0.5){u^2} & QLX \ar[dr]_<>(0.5){\varepsilon^1} \ar[rr]^<>(0.5){u^1}
&& QX \ar[r]^<>(0.5){\varepsilon^0} & X \\
& L^{n}X \ar[ur]_<>(0.5){v^{n}} && \ldots
\ar[ur]_<>(0.5){v^2} && LX \ar[ur]_<>(0.5){v^1} & .
}
\]
Here, the maps $\varepsilon^{0}$, $u^{1}$, $v^{1}$ are
$\varepsilon$, $u$ and $v$, respectively.
Moreover, compositions $u^{k} \circ u^{k+1}$ vanish, and all consecutive
arrows $v^{l+1}$, $\varepsilon^{l}$ fit in a distinguished triangle,
for all $l$,
\[
\xymatrix{
L^{l+1}X \ar[r]^<>(0.5){v^{l+1}} & QL^{l}X
\ar[r]^<>(0.5){\varepsilon^{l}} & L^{l}X
\ar[r]^<>(0.5){} & \Sigma L^{l+1}X
} \ko l \in \N \ko
\]
such that $u^{l} = v^{l}\varepsilon^{l}$, for all $l \in \N-\{0\}$.
In this sequence of maps we can easily check that
\[
m^*(L^nX) \; = \; \coprod_{i_1 \neq m} \coprod_{I(m,i_1)} \ldots
\coprod_{i_n \neq i_{n-1}} \coprod_{I(i_{n-1},i_n)} i_n^*X \ko
\]
for all $m \in I$. This shows that our `resolution' of $X$
must be finite since the diagram $I$ is supposed to be finite.

If $l = n$ or $l = n-1$ we have something more, since in these cases
distinguished triangles can be written as
\[
\xymatrix{
QLW \ar[r]^<>(0.5){v} & QW
\ar[r]^<>(0.5){\varepsilon} & W \ar[r] & \Sigma QLW \ko
}
\]
where $W$ can be $L^nX$ or $L^{n-1}X$.
We recall that this means a distinguished triangle
\[
\xymatrix{
\coprod_{j \in I} \coprod_{i \in I - \{j\}}
\coprod_{I(j,i)}
j_!i^*W \ar[r]^<>(0.5){u} & \coprod_{i \in I} i_!i^*W
\ar[r]^<>(0.5){\varepsilon} & W \ar[r] & \Sigma
\coprod_{j \in I} \coprod_{i \in I - \{j\}}
\coprod_{I(j,i)} j_!i^*W .
}
\]
Let us apply the functor $\Hom_{\T(I)}(?,Z)$ to this distinguished
triangle and get a long exact sequence of abelian groups
\[
\xymatrix{
\Hom_{\T(I)}(\Sigma \coprod_{j \in I}
\coprod_{i \in I - \{j\}} \coprod_{I(j,i)} j_!i^*W, Z) \ar[d] \\
\Hom_{\T(I)}(W, Z) \ar[d]^{\ul{\varepsilon}} \\
\Hom_{\T(I)}(\coprod_{i \in I} i_!i^*W, Z) \ar[d]^{\ul{u}} \\
\Hom_{\T(I)}(\coprod_{j \in I} \coprod_{i \in I - \{j\}}
\coprod_{I(j,i)} j_!i^*W, Z) .
}
\]
Here, the morphisms $\ul\varepsilon$ and $\ul u$ are induced
by composition from $\varepsilon$ and $u$. By using the bijections
induced by the adjunction pairs $j_! \dashv j^*$ and $i_! \dashv i^*$,
we obtain an exact sequence of abelian groups
\[
\xymatrix{
\prod_{j \in I} \prod_{i \in I - \{j\}} \prod_{I(j,i)}
\Hom_{\T(e)}(\Sigma i^*W, j^*Z)
\ar[d] \\
\Hom_{\T(I)}(W, Z)
\ar[d]^{\varepsilon'} \\
\prod_{i \in I}\Hom_{\T(e)}(i^*W, i^*Z) \ar[d]^{u'} \\
\prod_{j \in I} \prod_{i \in I - \{j\}} \prod_{I(j,i)}
\Hom_{\T(e)}(i^*W, j^*Z) .
}
\]
Since this sequence is exact, it follows that the image of the
homomorphism $\varepsilon '$ precisely consists of the families
of morphisms $\{ f_i : i^*W \to i^*Z \}_{i \in I}$ which specify
a morphism in the category $\Mor_{I^\circ}(\T(e))$.
Indeed, if $u'(\{f_i\}_{i \in I})$ is zero, we have a commutative square
\[
\xymatrix{
i^*W \ar[rrr]^<>(0.5){[\id^{i^*W}, -\alpha^*_{ij}W]^t}
\ar[dd]_{f_i} \ar[ddrrr]^0 &&& i^*W \oplus j^*W \ar[dd]^{f_i+f_j} \\\\
i^*Z \ar[rrr]_<>(0.5){[\id^{i^*Z}, -\alpha^*_{ij}Z]^t} &&&
\; i^*Z \oplus j^*Z \ko
}
\]
for each arrow $a \in {I(j,i)}$.
This clearly gives the equality
$f_j \circ \alpha^*_{ij}(W) = \alpha^*_{ij}(Z) \circ f_i$, which
is exactly the condition for the family $\{ f_i \}_{i \in I}$
to be a morphism lying in the category $\Mor_{I^\circ}(\T(e))$.

Thus, we get an exact sequence
\[
\xymatrix{
\prod_{j \in I} \prod_{i \in I - \{j\}} \prod_{{I(j,i)}}
\Hom_{\T(e)}(\Sigma i^* W, j^*Z) \ar[d] \\
\Hom_{\T(I)}(W, Z)
\ar[d]^{} \\
\Hom_{\Mor_{I^\circ}(\T(e))}(\mor_{I} W, \mor_{I} Z) \ar[d] \\
\; \; 0 .
}
\]
Notice that, passing in long exact sequence and using the Toda
condition of the hypothesis, it is not difficult to see that
there are vanishing groups
\[
\Hom_{\T(e)}(\Sigma^n i^*W, j^*Z) = 0 \ko \quad n>0 ,
\]
for each pair $(i,j) \in I \times I$.
This condition forces the first group in the sequence to be zero
and we get a bijection
\[
\xymatrix{
\Hom_{\T(I)}(W, Z) \ar[r]^<>(0.5){\simeq} &
\Hom_{\Mor_{I^\circ}(\T(e))}(\mor_{I} W, \mor_{I} Z) .
}
\]

Going from $l = n-2$ on, the situation is new. In this case a
distinguished triangle is the horizontal one in the commutative
diagram
\[
\xymatrix{
QL^{n-1}X \ar[d]^<>(0.5){\varepsilon^{n-1}} \ar[dr]^<>(0.5){u^{n-1}} &&&
\Sigma QL^{n-1}X \ar[d]^<>(0.5){\Sigma \varepsilon^{n-1}} \\
L^{n-1}X \ar[r]^<>(0.5){v^{n-1}} & QL^{n-2}X
\ar[r]^<>(0.5){\varepsilon^{n-2}} & L^{n-2}X \ar[r] & \Sigma L^{n-1}X .
}
\]
After applying the functor $\Hom_{\T(I)}(?,Z)$ as in the initial
step, we get a long exact sequence of abelian groups, that we report
as the vertical sequence in the commutative diagram
\[
\xymatrix{
\Hom_{\T(I)}(\Sigma L^{n-1}X, Z) \ar[d]
\ar@{ (->}[rr]^<>(0.5){\Sigma\ul\varepsilon^{n-1}} &&
\Hom_{\T(I)}(\Sigma QL^{n-1}X, Z) \\
\Hom_{\T(I)}(L^{n-2}X, Z) \ar[d]^{\ul\varepsilon^{n-2}} \\
\Hom_{\T(I)}(QL^{n-2}X, Z) \ar[d]^{\ul v^{n-1}}
\ar[drr]^{\ul u^{n-1}} \\
\Hom_{\T(I)}(L^{n-1}X, Z) \ar@{ (->}[rr]^<>(0.5){\ul\varepsilon^{n-1}} &&
\Hom_{\T(I)}(QL^{n-1}X, Z) .
}
\]

By the preceding step we know that the maps $\ul\varepsilon^{n-1}$
and $\Sigma\ul\varepsilon^{n-1}$ are mono. This entails that the
group $\Hom_{\T(I)}(\Sigma L^{n-1}X, Z)$ must be zero since the
group $\Hom_{\T(I)}(\Sigma QL^{n-1}X, Z)$ is trivial. Indeed,
using again the adjunction morphisms, we have
\begin{eqnarray*}
\Hom_{\T(I)}(\Sigma QL^{n-1}X, Z)
& = & \Hom_{\T(I)}(\coprod_{i \in I} i_!i^*L^{n-1}X, Z) \\
& \iso & \prod_{i \in I} \Hom_{\T(I)}(i_!i^*L^{n-1}X, Z) \\
& \iso & \prod_{i \in I} \Hom_{\T(I)}(i^*L^{n-1}X, i^*Z) \\
& = & 0 .
\end{eqnarray*}
The last equality comes from the condition found in the initial step.
It follows that the homomorphism $\ul\varepsilon^{n-2}$ is a mono.

Moreover, the morphisms in $\Hom_{\T(I)}(QL^{n-2}X, Z)$ that are
killed by the map $\ul v^{n-1}$ are exactly the same which are killed
by $\ul u^{n-1}$. Hence, the homomorphism  $\ul\varepsilon^{n-2}
\circ \ul v^{n-1}$, which is zero, must factor, up to iso, through the
group $\Hom_{\Mor_{I^\circ}(\T(e))}(\mor_{I} L^{n-2}X, \mor_{I} Z)$.

If we put everything together we find an exact sequence
\[
\xymatrix{
0 \ar[r] & \Hom_{\T(I)}(L^{n-2}X, Z) \ar[r]^<>(0.5)\eqiso &
\Hom_{\Mor_{I^\circ}(\T(e))}(\mor_{I} L^{n-2}X, \mor_{I} Z) \ar[r] & 0 .
}
\]
Now we can proceed by induction until we get an isomorphism
\[
\xymatrix{
\Hom_{\T(I)}(X, Z) \ar[r]^<>(0.5)\eqiso &
\Hom_{\Mor_{I^\circ}(\T(e))}(\mor_{I} X, \mor_{I} Z) .
}
\]

Clearly, this isomorphism is induced by the functor $\mor_I$.
Thus, we can use the canonical isomorphism between the categories
$\ul\Hom(I^\circ,\T(e))$ and $\Mor_{I^\circ}(\T(e))$ and the
claim follows.
\end{proof}

\subsection{Epivalence of the diagram functor} \label{ss:epidia}
Suppose we are given a triangulated derivator $\T$ of type
$\Dia$. Let us recall, from Appendix 1 in \cite{KellerNicolas11}
to which we inspire, that, for each $i$ in a small category $I$
lying in $\Dia$, the evaluation functor of presheaves
$F \mapsto F_i$ has a left adjoint denoted as $? \ten i$,
\[
\xymatrix{
\ul\Hom(I^\circ, \T(e)) \ar@<+0.5ex>[d]^{(?)_i} \\
\T(e) \ar@<+0.5ex>[u]^{? \ten i} & .
}
\]

For each $j \in I$, there is a canonical isomorphism
\[
(F \ten i)_j = \coprod_{I(j,i)}F .
\]

\begin{lemma}[Keller and Nicol{\`a}s, \cite{KellerNicolas11}]
\label{lemmaKeller}
Let $I$ be any small category in $\Dia$. For each $i \in I$
the triangle
\[
\xymatrix{
\T(I) \ar[r]^<>(0.5){\dia_I} & \ul\Hom(I^\circ,\T(e)) \\
\T(e) \ar[u]^{i_!} \ar[ur]_{? \ten i}
}
\]
commutes up to a canonical isomorphism.
\end{lemma}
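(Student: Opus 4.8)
The plan is to compute the fibres of $i_!Y$ by means of the base change axiom {\bf Der 4} and to recognise them as the coproducts which, by definition, give $Y\ten i$.

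First I would fix the functor $i:e\to I$ determined by the object $i$, so that $i_!:\T(e)\to\T(I)$ is its left adjoint and $i^*:\T(I)\to\T(e)$ is ``evaluation at $i$'', compatible with the diagram functor via the tautological identity $(\dia_I X)_i=i^*X$. Combining the adjunction $i_!\dashv i^*$ with the adjunction $(?\ten i)\dashv(?)_i$ on presheaves, one produces a canonical comparison morphism $\theta_Y:Y\ten i\longrightarrow\dia_I(i_!Y)$, namely the transpose of the unit $\eta_Y:Y\to i^*i_!Y=(\dia_I(i_!Y))_i$ of $i_!\dashv i^*$; it is natural in $Y$, hence it assembles into a $2$-morphism $\theta:(?\ten i)\Rightarrow\dia_I\circ i_!$. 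What is left is to prove that $\theta_Y$ is an isomorphism for every $Y$.

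By {\bf Der 2} it suffices to show that the fibre $(\theta_Y)_x$ is an isomorphism of $\T(e)$ for each object $x$ of $I$. Here I would apply {\bf Der 4} to the functor $i:e\to I$ at the object $x$: the comma category $x\backslash e$ formed with respect to $i$ has the morphisms $x\to i$ of $I$ as objects and only identities as morphisms, so it is the discrete category on the set $I(x,i)$; the forgetful functor $j:x\backslash e\to e$ is the terminal projection, whence the restriction $Y|_{x\backslash e}$ is the constant presheaf; and a homotopy colimit of a constant presheaf over a discrete category is the corresponding coproduct (an instance of {\bf Der 1}, and in any case the coproduct $\coprod_{I(x,i)}Y$ exists in $\T(e)$, certainly for $I$ in $\Dia_\f$). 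Thus the base change morphism $c'_{\cd_{x\backslash i}}$ of {\bf Der 4} furnishes a canonical isomorphism $\coprod_{I(x,i)}Y\iso(i_!Y)_x$.

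The hard part, and the step I expect to be the main obstacle, is to identify this {\bf Der 4} isomorphism with $(\theta_Y)_x$ and to check that these fibrewise isomorphisms are natural in $x$. One verifies that both maps have the same component at every $g\in I(x,i)$, namely the composite $Y\xrightarrow{\eta_Y}i^*i_!Y\xrightarrow{\alpha_g^*(i_!Y)}x^*i_!Y$, where $\alpha_g^*$ is the transition morphism of the presheaf $\dia_I(i_!Y)$ attached to $g$ (on the {\bf Der 4} side this is because the $2$-cell of the square $\cd_{x\backslash i}$ has component $g$ at the object $g$); the computation uses only the triangle identity $i^*(\varepsilon)\circ\eta=\id$ and the explicit formula for $c'$. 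Naturality in $x$ then follows from the $2$-functoriality of the base change isomorphisms along the functors $x\backslash e\to x'\backslash e$ induced by the morphisms of $I$. Altogether $\theta_Y:Y\ten i\iso\dia_I(i_!Y)$ is an isomorphism of presheaves, natural in $Y$, which is precisely the asserted canonical commutativity of the triangle; the only genuinely delicate points are the bookkeeping of these $2$-categorical coherences and the (routine) identification of a homotopy colimit over a discrete diagram with a coproduct.
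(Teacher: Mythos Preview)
The paper does not supply its own proof of this lemma; it is quoted from Keller--Nicol\`as and used as a black box. Your argument is the standard one and is correct in substance: construct the comparison $\theta_Y$ by adjunction, then compute each fibre $(i_!Y)_x$ via the base change isomorphism of {\bf Der 4}, observing that the comma category $x\backslash e$ formed with respect to $i:e\to I$ is discrete on the set $I(x,i)$, so that the homotopy colimit collapses to the coproduct $\coprod_{I(x,i)}Y$. This is exactly the computation the paper itself carries out implicitly later (e.g.\ when evaluating $k^*(LX)$ in the proof of Proposition~\ref{prop:fullfaith}).

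One small correction: when you write ``By {\bf Der 2} it suffices to show that the fibre $(\theta_Y)_x$ is an isomorphism'', note that $\theta_Y$ lives in the presheaf category $\ul\Hom(I^\circ,\T(e))$, not in $\T(I)$. A natural transformation of presheaves is an isomorphism iff each component is, simply by the nature of functor categories; {\bf Der 2} is the analogous (and nontrivial) statement for $\T(I)$ and is not what is being invoked here. With that cosmetic fix your proof is complete, and the coherence checks you flag as ``the hard part'' are indeed routine unwindings of the triangle identities and the definition of the base change $2$-cell.
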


\begin{remark} \label{rmk:lemmaexact}
Everything we have said in this section is true if we replace the
triangulated derivator $\T$ with the represented exact prederivator
$\ul\ca$. In particular, Lemma \ref{lemmaKeller} holds in this case.
Moreover, suppose that $F_e : \ul\ca(e) \to \T(e)$ is an exact
functor and $\ul{F_e}_I$ is the induced functor defined on presheaves.
It is not difficult to check that the diagram
\[
\xymatrix{
\ul\Hom(I^\circ, \ul\ca(e)) \ar@<+0.5ex>[d]^{(?)_i} \ar[r]^{\ul{F_e}_I} &
\ul\Hom(I^\circ, \T(e)) \ar@<+0.5ex>[d]^{(?)_i} \\
\ul\ca(e) \ar@<+0.5ex>[u]^{? \ten i} \ar[r]_{F_e} &
\T(e) \ar@<+0.5ex>[u]^{? \ten i}
}
\]
commutes in both directions.
\end{remark}

The following proposition gives a sufficient condition which allows
presheaves and their morphisms lift from $\ul\Hom(I^\circ,\T(e))$
to objects and morphisms of $\T(I)$. This tells us that the functor
$\dia_I$ is an epivalence when we consider its restriction to the
preimage of some subcategory of presheaves which enjoys this property
(that we call `Toda condition'). This result is one of the key
ingredients in the proof of the main theorem of this section.

\begin{proposition} \label{prop:ess-surj}
Let $\T$ be a triangulated derivator of type $\Dia_\f$.
Fix an arbitrary finite diagram $I$. Then,
\begin{itemize}
\item[a)] given an object $F$ in $\ul\Hom(I^\circ,\T(e))$, such
that the (Toda) condition
\[
\Hom_{\T(e)}(\Sigma^n F_i, F_j) = 0 \ko \quad n>0 ,
\]
holds for each pair $(i,j) \in I \times I$, there exists an object
$\tilde{F}$ in $\T(I)$ such that $\dia_{I}(\tilde{F})$ is canonically
isomorphic to $F$;
\item[b)] given a morphism $f : F \to F'$ in $\ul\Hom(I^\circ,\T(e))$,
such
that the (Toda) conditions
\[
\Hom_{\T(e)}(\Sigma^n F_i, F_j) = 0 \ko \,
\Hom_{\T(e)}(\Sigma^n F'_i, F'_j) = 0 \ko \,
\Hom_{\T(e)}(\Sigma^n F_i, F'_j) = 0 \ko
\]
$n > 0$, hold for each pair $(i,j) \in I \times I$, there exists a
morphism $\tilde{f} : \tilde{F} \to \tilde{F'}$ in $\T(I)$ such
that $\dia_{I}(\tilde{f})$ is canonically isomorphic to $f$.
\end{itemize}
\end{proposition}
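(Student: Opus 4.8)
The plan is to imitate, on the side of presheaves, the finite ``resolution'' appearing in the proof of Proposition~\ref{prop:fullfaith}, and then to lift it term by term into $\T(I)$ using the full faithfulness of $\dia_I$ established there. First, for part a), I would fix the combinatorial skeleton. For a presheaf $G$ in $\ul\Hom(I^\circ,\T(e))$ write $QG:=\coprod_{i\in I}(G_i\ten i)$, so that $(QG)_j=\coprod_{i\in I}\coprod_{I(j,i)}G_i$, and let $\varepsilon_G:QG\to G$ be the counit of the adjunctions $(?\ten i)\dashv(?)_i$, whose component at $j$ along the summand indexed by an arrow $a:j\to i$ of $I$ is the transition map $G(a)$. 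Since $I$ has no nontrivial loops, the summand indexed by $(j,\id_j)$ splits $\varepsilon_{G,j}$, so $\varepsilon_G$ is a pointwise split epimorphism; let $LG$ denote its pointwise kernel, which is again a presheaf, with $(LG)_j=\coprod_{i\ne j}\coprod_{I(j,i)}G_i$. Iterating $L^0F:=F$, $L^{n+1}F:=L(L^nF)$ and arguing as in the proof of Proposition~\ref{prop:fullfaith}, one sees that $(L^nF)_m$ is a coproduct of fibers $F_{i_n}$ over all chains $m\to i_1\to\cdots\to i_n$ of non-identity arrows of $I$; since $I$ is finite and directed, $L^NF=0$ for $N\gg0$, and every fiber of every $L^nF$ is a finite coproduct of fibers of $F$. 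Hence the Toda hypothesis on $F$ propagates: $\Hom_{\T(e)}\bigl(\Sigma^m(L^nF)_i,(L^{n'}F)_j\bigr)=0$ for all $m>0$ and all $n,n',i,j$.

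Next I would build $\tilde F$ by descending induction, producing objects $\widetilde{L^nF}\in\T(I)$ together with isomorphisms $\dia_I\widetilde{L^nF}\iso L^nF$, starting from $\widetilde{L^NF}:=0$. Given $\widetilde{L^{n+1}F}$ with its isomorphism, set $\widetilde{QL^nF}:=\coprod_{i\in I}i_!\bigl((L^nF)_i\bigr)\in\T(I)$, which by Lemma~\ref{lemmaKeller} (and since $\dia_I$ preserves finite coproducts) satisfies $\dia_I\widetilde{QL^nF}\cong QL^nF$. The fibers of $\widetilde{L^{n+1}F}$ and of $\widetilde{QL^nF}$ being finite coproducts of fibers of $F$, the pair $\bigl(\widetilde{L^{n+1}F},\widetilde{QL^nF}\bigr)$ satisfies the Toda condition of Proposition~\ref{prop:fullfaith}, so $\dia_I$ is bijective on the morphisms between these two objects; reading the canonical presheaf inclusion $v^{n+1}:L^{n+1}F\to QL^nF$ through the fixed isomorphisms, it therefore lifts uniquely to a morphism $\tilde v^{n+1}:\widetilde{L^{n+1}F}\to\widetilde{QL^nF}$ of $\T(I)$. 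I would then put $\widetilde{L^nF}:=\opname{cone}(\tilde v^{n+1})$. Because each $i^*$ is a triangulated functor, $\dia_I$ sends the distinguished triangle $\widetilde{L^{n+1}F}\to\widetilde{QL^nF}\to\widetilde{L^nF}\to\Sigma\widetilde{L^{n+1}F}$ to a pointwise distinguished triangle whose first map is the pointwise split monomorphism $v^{n+1}$; hence the third term, equipped with the induced map from $\dia_I\widetilde{QL^nF}=QL^nF$, is a cokernel of $v^{n+1}$ in the category of presheaves, and that cokernel is already exhibited by $\varepsilon_{L^nF}:QL^nF\to L^nF$. This provides the canonical isomorphism $\dia_I\widetilde{L^nF}\iso L^nF$, and at $n=0$ we obtain $\tilde F:=\widetilde{L^0F}$ with $\dia_I(\tilde F)$ canonically isomorphic to $F$.

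For part b), I would simply invoke part a) and Proposition~\ref{prop:fullfaith} once more: the first two Toda hypotheses furnish objects $\tilde F,\widetilde{F'}\in\T(I)$ with $\dia_I\tilde F\iso F$ and $\dia_I\widetilde{F'}\iso F'$, whose fibers are, up to isomorphism, the $F_i$ and the $F'_j$; the third Toda hypothesis is precisely the Toda condition of Proposition~\ref{prop:fullfaith} for the pair $(\tilde F,\widetilde{F'})$, so $\dia_I$ induces a bijection $\Hom_{\T(I)}(\tilde F,\widetilde{F'})\iso\Hom_{\ul\Hom(I^\circ,\T(e))}(F,F')$. I take $\tilde f$ to be the unique preimage of $f$ under this bijection; then $\dia_I(\tilde f)$ is, via the fixed isomorphisms, canonically identified with $f$.

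The step I expect to demand the most care is the coherence of the iterated cone construction in part a): since cones in a triangulated category are not functorial, one must verify that $\opname{cone}(\tilde v^{n+1})$ has underlying diagram canonically --- not merely fibrewise --- isomorphic to $L^nF$. This is exactly what the presheaf-level cokernel description secures, together with the uniqueness of the lift $\tilde v^{n+1}$ coming from Proposition~\ref{prop:fullfaith}; and the finiteness of the resolution, which makes the induction terminate, is where the hypothesis that $I$ be finite and directed enters in an essential way.
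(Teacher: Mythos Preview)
Your argument is correct. For part a) you follow essentially the same strategy as the paper: build a finite pointwise-split ``projective'' resolution of $F$ in $\ul\Hom(I^\circ,\T(e))$ using the objects $QL^nF=\coprod_i i_!((L^nF)_i)$, and lift it into $\T(I)$ from the top down by taking cones. The paper calls your $Q,L$ by the names $P,K$, and organizes the induction slightly differently --- it lifts the maps $u^l:PK^lF\to PK^{l-1}F$ between two \emph{explicit} projective lifts and then invokes Proposition~\ref{prop:fullfaith} to see that consecutive compositions vanish, whence a factorization through the cone --- whereas you lift the kernel inclusion $v^{n+1}:L^{n+1}F\to QL^nF$ directly via Proposition~\ref{prop:fullfaith}. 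Both are the same idea; your bookkeeping is a touch leaner.

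For part b) you take a genuinely shorter route than the paper. The paper lifts the entire morphism of projective resolutions square by square and then reads off the induced map on successive cones. You instead observe that once $\tilde F$ and $\widetilde{F'}$ are in hand from part a), their fibers are (up to isomorphism) the $F_i$ and $F'_j$, so the third Toda hypothesis is exactly the hypothesis of Proposition~\ref{prop:fullfaith} for the pair $(\tilde F,\widetilde{F'})$, and the resulting bijection on $\Hom$ sets hands you $\tilde f$ immediately (and uniquely). This is valid and more economical; the paper's longer construction is not needed for the bare statement of b), though it does make the lifted morphism visibly compatible with the chosen resolutions.
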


\begin{proof}
a) {\em Step 1. An exact category with finite global dimension.}
Recall that every additive category can be endowed with an exact
structure by considering as conflations the split exact pairs
(\cf Example \ref{ex:exact cats}). In this way, we can consider
$\T(e)$ as an exact category and endow $\ul\Hom(I^\circ,\T(e))$
with an exact structure by taking as conflations the pointwise
split exact pairs.

Let us construct a projective-like resolution of the object $F$
lying in $\ul\Hom(I^\circ,\T(e))$. As a first step we can consider
the morphism
\[
PF := \coprod_{i \in I} F_i \ten i
\xymatrix{\ar[r]^{p^0} &} F
\]
defined by using the counit of the adjunctions $? \ten i \dashv (?)_i$.
Clearly, the morphism $p^0$ is a deflation. Indeed, evaluating $PX$
over $j$, we get a retraction in $\T(e)$
\[
\xymatrix{
(PF)_j = \coprod_{i \in I} \coprod_{I(j,i)} F_i \ar[r] & F_j .
}
\]
We can form the (possibly non split) conflation in
$\ul\Hom(I^\circ,\T(e))$
\[
\xymatrix{
KF \ar@{ >->}[r]^{i^0} & PF \ar@{->>}[r]^{p^0} & F .
}
\]

It may happen that $KF$ is already a projective-like object of
type $PKF$ and consequently the global dimension of the category
$\ul\Hom(I^\circ,\T(e))$ is one even if the maximal length of the
diagram $I$ is infinite (\cf the case where $I$ is $\N^\circ$ in
\cite[Appendix 1]{KellerNicolas11}).
But in general this is not the case and we have to iterate the
construction. Suppose that $n$ is the maximal length of a chain of
nonidentical arrows
\[
i_n \lra \ldots \lra i_1 \lra i_0
\]
in $I$. Then, it is not difficult to check by induction that the
object $K^{n+1}F$ is zero. This fact shows that $K^nF$ is projective
and that the length of a projective resolution is bounded by $n$, as
in the following chain complex
\[
\xymatrix{
0 \ar[r] & PK^nF = K^nF \ar@{ >->}[r]^<>(0.5){i^{n-1}} & PK^{n-1}F
\ar[r]^<>(0.5){u^{n-1}} &
\dots \ar[r] & PKF \ar[r]^{u^1} & PF \ar@{->>}[r]^{p^0} & F .
}
\]

Let us give a description of the morphisms in the above resolution.
We can start with $u^1$. For each arrow $a^1 : j \to i$ in $I$, let
$a^1_j \in \Hom(F_i \ten j, F_j \ten j)$ be the image of the identity
$\id^{F_j \ten j}$ under the composition of the morphisms
\begin{eqnarray*}
\Hom_{\ul\Hom(I^\circ,\T(e))}(F_j \ten j, F_j \ten j) \iso &
               \Hom_{\T(e)}(F_j, (F_j \ten j)_j) \\
               \to & \Hom_{\T(e)}(F_i, (F_j \ten j)_j) \\
               \iso & \Hom_{\ul\Hom(I^\circ,\T(e))}(F_i \ten j, F_j \ten j)
\end{eqnarray*}
induced by the adjoint pair $? \ten j \dashv (?)_j$ and the $2$-arrow
\[
\xymatrix{\ul\Hom(I^\circ,\T(e)) \;\; \rtwocell^{(?)_i}_{(?)_j} & \T(e)}
\]
associated with the natural transformation
\[
\xymatrix{e \rtwocell^j_i & I}
\]
defined by the morphism $a^1$. Similarly, the morphism $a^1$ and the
adjoint pair $? \ten i \dashv (?)_i$ induce a morphism
$a^1_i : F_i \ten j \to F_i \ten i$, image of $\id^{F_i \ten i}$. Thus,
for each morphism $a^1 : j \to i$ in $I$, we get an arrow
\[
\xymatrix{
F_i \ten j \ar[rr]^<>(0.5){[a^1_i, -a^1_j]^t} &&
F_i \ten i \oplus F_j \ten j \ar[rr]^<>(0.5){can} &&
\coprod_{i \in I}F_i \ten i.
}
\]
These are the components of the arrow
\[
\xymatrix{
(KF)_j \ten j = \coprod_{i \in I - \{j\}} \coprod_{I(j,i)}
F_i \ten j \ar[rr]^<>(0.5){u^1_j} && \coprod_{i \in I}F_i \ten i
}
\]
describing the morphism
\[
\xymatrix{
PKF = \coprod_{j \in I}(KF)_j \ten j \ar[rr]^<>(0.5){u^1} && PF .
}
\]
We recall that here $I(j,i)$ denotes the discrete category of arrows
$j \to i$ in $I$.

An explicit diagram, associated to a maximal length chain of nonidentical
arrows in $I$,
\[
i_n \lra \ldots  \lra i_1 \lra i_0 \ko
\]
which describes a part of the whole diagram associated to $I$, might help
\[
\xymatrix{
PKF \ar@{->>}[d] & 0 \ar[r] \ar[d] & (KF)_{i_1} \ar[r]
\ar@{=}[d] & (KF)_{i_2} \oplus (K^2F)_{i_2} \ldots
\ar[r] \ar@{->>}[d] & \ldots \\
KF \ar@{ >->}[d] & 0 \ar[r] \ar@{ >->}[d] & (KF)_{i_1} \ar[r]
\ar@{ >->}[d] & (KF)_{i_2} \ar[r] \ar@{ >->}[d] & \ldots \\
PF \ar@{->>}[d] & F_{i_0} \ar[r] \ar@{=}[d] &
F_{i_1} \oplus (KF)_{i_1} \ar[r] \ar@{->>}[d] & F_{i_2} \oplus (KF)_{i_2}
\ar[r] \ar@{->>}[d] & \ldots \\
F & F_{i_0} \ar[r] & F_{i_1} \ar[r] & F_{i_2} \ar[r] & \ldots & .
}
\]
By induction, one can get the description of all the maps in the
projective resolution of $F$. \\

{\em Step 2. Lifting a morphism of projective objects along
the diagram functor.}
Define two objects in $\T(I)$
\[
\tilde{PKF} := \coprod_{j \in I} j_!(KF)_j
= \coprod_{j \in I} \coprod_{i \in I - \{j\}}
\coprod_{I(j,i)} j_!(F_i)
\]
and
\[
\tilde{PF} := \coprod_{i \in I} i_!(F_i) .
\]
Lemma \ref{lemmaKeller} tells us that these objects lift the diagrams
$PKF$ and $PF$ along the functor $\dia_{I}$. Let us construct the
morphism $\tilde{u}^1$ that lifts $u^1$. For each arrow $a^1 : j \to i$
in the diagram $I$, let $\tilde{a^1}_i$ be the image of the identity
$\id^{i_!F_i}$ under the composition of morphisms
\begin{eqnarray*}
\Hom_{\T(I)}(i_!F_i, i_!F_i) \iso &
                 \Hom_{\T(e)}(F_i, i^*i_!F_i) \\
                 \to & \Hom_{\T(e)}(F_i, j^*i_!F_i) \\
                 \iso & \Hom_{\T(I)}(j_!F_i, i_!F_i)
\end{eqnarray*}
induced by the adjoint pairs $i_! \dashv i^*$ and $j_! \dashv j^*$
and the $2$-arrow
\[
\xymatrix{\T(I) \rtwocell^{i^*}_{j^*} & \T(e)}
\]
associated with the natural transformation
\[
\xymatrix{e \rtwocell^j_i & I}
\]
defined by the morphism $a^1$. Let us denote by $\tilde{a^1}_j$ the
morphism $j_!(F_{a^1})$. Thus, for each morphism $a^1 : j \to i$ in $I$,
we get an arrow
\[
\xymatrix{
j_!F_i \ar[rr]^<>(0.5){[\tilde{a^1}_i, -\tilde{a^1}_j]^t} &&
i_!F_i \oplus j_!F_j \ar[rr]^<>(0.5){can} &&
\coprod_{i \in I} i_!F_i .
}
\]
These are the components of an arrow
\[
\xymatrix{
j_!(KF)_j = \coprod_{i \in I - \{j\}} \coprod_{I(j,i)}
j_!F_i \ar[rr]^<>(0.5){\tilde{u^1}_j} && \coprod_{i \in I} i_!F_i
}
\]
describing the morphism
\[
\xymatrix{
\tilde{PKF} = \coprod_{j \in I} j_!(KF)_j
\ar[rr]^<>(0.5){\tilde{u^1}} && \tilde{PF} .
}
\]
It is clear by the construction and by Lemma \ref{lemmaKeller} that
this morphism lifts $u^1$, \ie, the morphism $\dia_{I}(\tilde{u^1})$
is isomorphic to $u^1$.

Analogously, we can lift all the morphisms
$u^l : PK^lF \to PK^{l-1}F$, $l \in \{ 1, \ldots n \}$, of the
projective resolution of $F$ to morphisms $\tilde{u^l} : \tilde{PK^lF}
\to \tilde{PK^{l-1}F}$, $l \in \{ 1, \ldots n \}$, in $\T(I)$. \\

{\em Step 3. Lifting a projective resolution along the diagram
functor.} We construct a sequence of morphisms in $\T(I)$ which
lifts the projective resolution that we have constructed in
$\ul\Hom(I^\circ,\T(e))$,
\[
\xymatrix{
0 \ar[r] & PK^nF = K^nF \ar@{ >->}[r]^<>(0.5){u^n} & PK^{n-1}F
\ar[r]^<>(0.5){u^{n-1}} & \dots \ar[r]^<>(0.5){u^2} &
PKF \ar[r]^<>(0.5){u^1} & PF \ar@{->>}[r]^<>(0.5){u^0} & F .
}
\]
Recall that this is a strictly acyclic resolution, \ie, there are
conflations
\[
\xymatrix{
K^{l+1}F \ar@{ >->}[r]^<>(0.5){i^{l}} & PK^{l}F
\ar@{->>}[r]^<>(0.5){p^{l}} & K^{l}F
} \ko l \in \N \ko
\]
such that $u^l = i^{l-1}p^l$, for all $l \in \N-\{0\}$.

We can start by lifting the morphism $u^n = i^{n-1}$ in the conflation
\[
\xymatrix{
K^nF \ar@{ >->}[r]^<>(0.5){u^n} & PK^{n-1}F
\ar@{->>}[r]^<>(0.5){p^{n-1}} & K^{n-1}F
}
\]
along the diagram functor to a morphism
$\tilde{u^n} : \tilde{K^nF} \to \tilde{PK^{n-1}F}$, following the
step 2. Let us consider a distinguished triangle
\[
\xymatrix{
\tilde{K^nF} \ar[r]^<>(0.5){\tilde{u^n}} & \tilde{PK^{n-1}F}
\ar[r]^<>(0.5){\tilde{p^{n-1}}} &
\tilde{K^{n-1}F} \ar[r] & \Sigma \tilde{K^nF} \ko
}
\]
which extends $\tilde{u^n}$ in $\T(I)$.
The image of the composition of morphisms
$\tilde{p^{n-1}} \circ \tilde{u^n}$ under the functor $\dia_I$
is the zero morphism $\dia_I(\tilde{p^{n-1}}) \circ u^n$.
Therefore, there exists a {\em unique} morphism
$\varphi^{n-1} : K^{n-1}F \to \dia_I(\tilde{K^{n-1}F})$
such that $\dia_I(\tilde{p^{n-1}})$ equals the composition
$\varphi^{n-1} \circ p^{n-1}$.

For each $m \in I$, after applying the (triangulated) functor $m^*$
to the distinguished triangle above, we get a distinguished triangle
in $\T(e)$
\[
\xymatrix{
(K^nF)_m \ar[rr]^<>(0.5){(u^n)_m} && (PK^{n-1}F)_m
\ar[rr]^<>(0.5){(\dia_{I}(\tilde{p^{n-1}}))_m} &&
(\dia_{I}(\tilde{K^{n-1}F}))_m \ar[r] & \Sigma (K^nF)_m
}
\]
which is split since $u^n$ is an inflation. This shows that
$(\dia_{I}(\tilde{K^{n-1}F}))_m$ is the cokernel of $(u^n)_m$, for
all $m \in I$.
Thus, $(\varphi^{n-1})_m$ must be a (canonical) isomorphism, for all
$m \in I$. It follows that $\dia_{I}(\tilde{p^{n-1}})$ is the cokernel
of $u$. Hence the canonical morphism $\varphi^{n-1}$ is invertible.

Let us lift the morphism $u^{n-1}$ along $\dia_I$ to a morphism
$\tilde{u^{n-1}}$ as in step 2. It is not difficult to verify,
using the hypotheses by induction over $\N$, that
\[
\Hom_{\T(e)}(\Sigma^n i^*(\tilde{PK^{l+2}F}), j^*(\tilde{PK^{l}F})) = 0
\ko \quad n>0 ,
\]
for all $l \in \N$. Since $\dia_{I}(\tilde{u^{n-1}} \circ \tilde{u^n})$
is zero, Proposition \ref{prop:fullfaith} with $l = n-2$ applies and
tells us that the composition $\tilde{u^{n-1}} \circ \tilde{u^n}$
already vanishes in $\T(I)$. As a consequence, we get a morphism
$\tilde{i^{n-2}} : \tilde{K^{n-1}F} \to \tilde{PK^{n-2}F}$ such that
the morphism $\tilde{u^{n-1}} : \tilde{PK^{n-1}F} \to \tilde{PK^{n-2}F}$
equals the composition $\tilde{i^{n-2}} \circ \tilde{p^{n-1}}$, whose
image under $\dia_{I}$ is the inflation $i^{n-2} : K^{n-1}F \to PK^{n-2}F$.

It is clear that we can iterate this construction until we get
a distinguished triangle
\[
\xymatrix{
\tilde{KF} \ar[r]^<>(0.5){\tilde{i^0}} & \tilde{PF}
\ar[r]^<>(0.5){\tilde{p^0}} &
\tilde{F} \ar[r] & \Sigma \tilde{KF} \ko
}
\]
whose image under $\dia_{I}$ gives (up to a canonical iso) a
conflation
\[
\xymatrix{
KF \ar@{ >->}[r]^<>(0.5){i^0} & PF
\ar@{->>}[r]^<>(0.5){p^{0} = u^{0}} & F .
}
\]
Thus, by lifting a projective resolution of $F$, we have constructed
an object $\tilde{F}$ in the category $\T(I)$ which lifts $F$ along
the diagram functor $\dia_I$. \\

b) {\em Step 1. Lifting a square of projective objects along the
diagram functor.}
Suppose that we are given a commutative square lying in
$\ul\Hom(I^\circ,\T(e))$
\[
\xymatrix{
PKF \ar[r]^<>(0.5){u} \ar[d]^h & PF \ar[d]^g \\
PKF' \ar[r]^<>(0.5){u'} & PF' .
}
\]
By looking at the structure of the objects $PKF$ and $PF$
(\cf with Step 1 in the proof of Claim a), we can see that the
morphisms $g$ and $h$ are completely determined by their
components, which are of the type $g_i \ten j$ and $h_i \ten j$,
respectively. So, by Lemma \ref{lemmaKeller}, it is sufficient to
lift these components to $j_!(g_i)$ and $j_!(h_i)$ and then
reconstruct to obtain morphisms $\tilde h$ and $\tilde g$ which
lift $h$ and $g$, respectively.

Thanks to the hypotheses, we can use the second step in the proof
of Claim a) of this proposition and lift the morphisms $u$ and $u'$
to the category $\T(I)$. We get a commutative square
\[
\xymatrix{
\tilde{PKF} \ar[r]^<>(0.5){\tilde{u}} \ar[d]^{\tilde{h}} &
\tilde{PF} \ar[d]^{\tilde{g}} \\
\tilde{PKF'} \ar[r]^<>(0.5){\tilde{u'}} & \tilde{PF'}
}
\]
in $\T(I)$ whose image under the diagram functor is canonically
isomorphic to the given square.

To verify commutativity, let us use the hypotheses about $F$ and $F'$
in the same way as above. It is easy to check that the vanishing
\[
\Hom_{\T(e)}(\Sigma^n i^*(\tilde{PK^{l+2}F}), j^*(\tilde{PK^{l}F'})) = 0
\ko \quad n>0 ,
\]
holds for all $l \in \N$. Since
$\dia_{I}(\tilde{g} \circ \tilde{u} - \tilde{u'} \circ \tilde{h})$
is zero, Proposition \ref{prop:fullfaith} with $l = n-2$ applies and
tells us that the square commutes. \\

{\em Step 2. Lifting a morphism of projective resolutions along the
diagram functor.}
Let $f : F \to F'$ be an arbitrary morphism in the category
$\ul\Hom(I^\circ,\T(e))$. We know from the proof of Claim a) that we
can construct projective resolutions of $F$ and $F'$ whose lengths are
bounded by the maximal length of a chain of nonidentical arrows in $I$,
\[
\xymatrix{
0 \ar[r] & PK^nF = K^nF \ar@{ >->}[r]^<>(0.5){u^n} & PK^{n-1}F
\ar[r]^<>(0.5){u^{n-1}} & \dots \ar[r]^<>(0.5){u^2} &
PKF \ar[r]^<>(0.5){u^1} & PF \ar@{->>}[r]^<>(0.5){u^0} & F
}
\]
and
\[
\xymatrix{
0 \ar[r] & PK^nF' = K^nF' \ar@{ >->}[r]^<>(0.5){u'^n} & PK^{n-1}F'
\ar[r]^<>(0.5){u'^{n-1}} & \dots \ar[r]^<>(0.5){u'^2} &
PKF' \ar[r]^<>(0.5){u'^1} & PF' \ar@{->>}[r]^<>(0.5){u'^0} & F' .
}
\]
Since these resolutions are made of projective objects, the
morphism $f : F \to F'$ extends to a morphism of resolutions lying
in $\ul\Hom(I^\circ,\T(e))$
\[
\xymatrix{
0 \ar[r] \ar[d] & PK^nF = K^nF \ar@{ >->}[r]^<>(0.5){u^n}
\ar[d]^{f^n} & PK^{n-1}F \ar[r]^<>(0.5){u^{n-1}} \ar[d]^{f^{n-1}} &
\dots \ar[r]^<>(0.5){u^2} & PKF \ar[r]^<>(0.5){u^1} \ar[d]^{f^1} &
PF \ar@{->>}[r]^<>(0.5){u^0} \ar[d]^{f^0} & F \ar[d]^f \\
0 \ar[r] & PK^nF' = K^nF' \ar@{ >->}[r]^<>(0.5){u'^n} & PK^{n-1}F'
\ar[r]^<>(0.5){u'^{n-1}} & \dots \ar[r]^<>(0.5){u'^2} &
PKF' \ar[r]^<>(0.5){u'^1} & PF' \ar@{->>}[r]^<>(0.5){u'^0} & F' .
}
\]

Let us start by lifting the square of projective objects
on the left side of the commutative diagram
\[
\xymatrix{
K^nF \ar@{ >->}[r]^<>(0.5){u^n} \ar[d]^{f^n} & PK^{n-1}F
\ar@{->>}[r]^<>(0.5){p^{n-1}} \ar[d]^{f^{n-1}} &
K^{n-1}F \ar[d]^{g^{n-1}} \\
K^nF' \ar@{ >->}[r]^<>(0.5){u'^n} & PK^{n-1}F'
\ar@{->>}[r]^<>(0.5){p'^{n-1}} & K^{n-1}F' \ko
}
\]
where the morphism $g^{n-1}$ is induced by the universal property
of the cokernel. It is routine to verify that the sufficient
conditions which allow us using the first step of this claim hold.
We get a commutative square
\[
\xymatrix{
\tilde{K^nF} \ar[r]^<>(0.5){\tilde{u^n}} \ar[d]^{\tilde{f^n}} &
\tilde{PK^{n-1}F} \ar[d]^{\tilde{f^{n-1}}} \\
\tilde{K^nF'} \ar[r]^<>(0.5){\tilde{u'^n}} & \tilde{PK^{n-1}F'} \ko
}
\]
whose image under the diagram functor is canonically isomorphic to
the given one.

Let us consider an extension in $\T(I)$ of the latter square
to a morphism of distinguished triangles
\[
\xymatrix{
\tilde{K^nF} \ar[r]^<>(0.5){\tilde{u^n}} \ar[d]^{\tilde{f^n}} &
\tilde{PK^{n-1}F} \ar[r]^<>(0.5){\tilde{p^{n-1}}}
\ar[d]^{\tilde{f^{n-1}}} &
\tilde{K^{n-1}F} \ar[d]^{\tilde{g^{n-1}}} \ar[r] &
\Sigma \tilde{K^nF} \ar[d]^<>(0.5){\Sigma \tilde{f^n}} \\
\tilde{K^nF'} \ar[r]^<>(0.5){\tilde{u'^n}} & \tilde{PK^{n-1}F'}
\ar[r]^<>(0.5){\tilde{p'^{n-1}}} & \tilde{K^{n-1}F'} \ar[r] &
\Sigma \tilde{K^nF'} .
}
\]
The images of the compositions of morphisms
$\tilde{p^{n-1}} \circ \tilde{u^n}$ and
$\tilde{p'^{n-1}} \circ \tilde{u'^n}$ under the functor $\dia_I$
are the zero morphisms $\dia_I(\tilde{p^{n-1}}) \circ {u^n}$ and
$\dia_I(\tilde{p'^{n-1}}) \circ {u'}^n$, respectively. It follows
that there exist {\em unique} morphisms $\varphi^{n-1} : K^{n-1}F
\to \dia_I(\tilde{K^{n-1}F})$ and $\varphi '^{n-1} : K^{n-1}F' \to
\dia_I(\tilde{K^{n-1}F'})$ such that $\dia_I(\tilde{p^{n-1}})$ and
$\dia_I(\tilde{p'^{n-1}})$ respectively equal compositions
$\varphi^{n-1} \circ p^{n-1}$ and $\varphi '^{n-1} \circ p'^{n-1}$.
Moreover, there is an isomorphism $\dia_I(\tilde{g^{n-1}}) \circ
\varphi^{n-1} = \varphi '^{n-1} \circ {g}^{n-1}$.

For each $m \in I$, we apply the (triangulated) functor $m^*$ to
the morphism of distinguished triangles above and get a morphism
of distinguished triangles in $\T(e)$
\[
\xymatrix{
(K^nF)_m \ar[rr]^<>(0.5){(u^n)_m} \ar[d]^{(f^n)_m} && (PK^{n-1}F)_m
\ar[rr]^<>(0.5){(\dia_{I}(\tilde{p^{n-1}}))_m}
\ar[d]^{(f^{n-1})_m} && (\dia_{I}(\tilde{K^{n-1}F}))_m
\ar[d]^{(\dia_{I}(\tilde{g^{n-1}}))_m}
\ar[r] & \Sigma (K^nF)_m \ar[d]^<>(0.5){\Sigma (f^n)_m} \\
(K^nF')_m \ar[rr]^<>(0.5){(u'^n)_m} && (PK^{n-1}F')_m
\ar[rr]^<>(0.5){(\dia_{I}(\tilde{p'^{n-1}}))_m} &&
(\dia_{I}(\tilde{K^{n-1}F'}))_m \ar[r] & \Sigma (K^nF')_m .
}
\]
Here, distinguished triangles are split since $u^n$ and $u'^n$ are
inflations. This shows that the morphisms
$(\dia_{I}(\tilde{K^{n-1}F}))_m$ and $(\dia_{I}(\tilde{K^{n-1}F'}))_m$
are the cokernels of $(u^n)_m$ and $(u'^n)_m$, for all $i \in I$.
Thus, $(\varphi^{n-1})_m$ and $(\varphi '^{n-1})_m$ must be (canonical)
isomorphisms, for all $m \in I$. It follows that the morphisms
$\dia_{I}(\tilde{p^{n-1}})$ and $\dia_{I}(\tilde{p'^{n-1}})$ are the
cokernels of $u^n$ and $u'^n$. Hence, the morphisms $\varphi^{n-1}$
and $\varphi '^{n-1}$ are invertible.
Finally, the universal property of cokernels induces a (canonical)
isomorphism from $g^{n-1}$ to $\dia_{I}(\tilde{g^{n-1}})$.

At this point we can proceed by lifting the square of projective
objects
\[
\xymatrix{
PK^{n-1}F \ar[r]^<>(0.5){u^{n-1}} \ar[d]^{f^{n-1}} & PK^{n-2}F
\ar[d]^{f^{n-2}} \\
PK^{n-1}F' \ar[r]^<>(0.5){u'^{n-1}} & PK^{n-2}F' \ko
}
\]
according to the Step 1 of this Claim b), in order to get a
commutative square
\[
\xymatrix{
\tilde{PK^{n-1}F} \ar[r]^<>(0.5){\tilde{u^{n-1}}}
\ar[d]^{\tilde{f^{n-1}}} &
\tilde{PK^{n-2}F} \ar[d]^{\tilde{f^{n-2}}} \\
\tilde{PK^{n-1}F'} \ar[r]^<>(0.5){\tilde{u'^{n-1}}} &
\tilde{PK^{n-2}F'} \ko
}
\]
whose image under the diagram functor is isomorphic to the given
one.

Sufficient conditions in order to apply Proposition
\ref{prop:fullfaith} hold. Since the images
$\dia_{I}(\tilde{u^{n-1}} \circ \tilde{u^n})$ and
$\dia_{I}(\tilde{u'^{n-1}} \circ \tilde{u'^n})$ are both
isomorphic to zero, it follows that the compositions
$\tilde{u^{n-1}} \circ \tilde{u^n}$ and
$\tilde{u'^{n-1}} \circ \tilde{u'^n}$ already vanish in $\T(I)$.
Therefore, there exist morphisms $\tilde{i^{n-2}}$ and
$\tilde{i'^{n-2}}$, whose images are the inflations $i^{n-2}$ and
$i'^{n-2}$, such that $\tilde{u^{n-1}}$ and $\tilde{u'^{n-1}}$ are
respectively isomorphic to the compositions $\tilde{i^{n-2}} \circ
\tilde{p^{n-1}}$ and $\tilde{i'^{n-2}} \circ \tilde{p'^{n-1}}$.

It is clear that we can continue by iterating this construction until
the cohomological degree is $0$. In the end, we will get a morphism
$\tilde{f} : \tilde{F} \to \tilde{F'}$, which lifts the given
morphism $f : F \to F'$ to the triangulated category $\T(I)$ along
the diagram functor $\dia_{I}$.
\end{proof}

\subsection{Invertibility of the diagram functor} \label{ss:inv}
Let us also recall that we say that an additive functor
$F : \ce \to \ct$ from an exact category $\ce$ to a
triangulated category $\ct$ is {\em exact} or, equivalently,
a {\em $\partial$-functor} if to any conflation in $\ce$,
\[
\xymatrix{
X \ar@{ >->}[r]^<>(0.5)u & Y \ar@{->>}[r]^<>(0.5)v & Z \ko
}
\]
it {\em functorially} associates a morphism $\partial(\eps)$
such that the diagram
\[
\xymatrix{
F(X) \ar[r]^<>(0.5){F(u)} & F(Y) \ar[r]^<>(0.5){F(v)} &
F(Z) \ar[r]^<>(0.5){\partial(\eps)} & \Sigma F(X)
}
\]
is a distinguished triangle of $\ct$.
It is straightforward to see that an additive $2$-morphism
$\mu : F \to F'$ of exact functors is an {\em exact}
$2$-morphism if to the conflation above it {\em functorially}
associates a morphism of distinguished triangles of $\ct$,
\[
\xymatrix{
F(X) \ar[r]^<>(0.5){F(u)} \ar[d]_{\mu^X} &
F(Y) \ar[r]^<>(0.5){F(v)} \ar[d]_{\mu^Y} &
F(Z) \ar[r]^<>(0.5){\partial(\eps)} \ar[d]^{\mu^Z} &
\Sigma F(X) \ar[d]^{\Sigma\mu^X} \\
F'(X) \ar[r]^<>(0.5){F'(u)} & F'(Y) \ar[r]^<>(0.5){F'(v)} &
F'(Z) \ar[r]^<>(0.5){\partial'(\eps)} & \Sigma F'(X) \; .
}
\]

In any case, let us remark that, in the presence of the Toda
condition as in the items a) and b) of the following theorem,
it would be equivalent if the basic exact $2$-morphism $\mu$ of
basic exact functors $F$, $F'$ that we want to extend were
taken {\em weakly} exact only. Indeed, it is not difficult to
see that {\em all} the weakly exact functors are exact in the
presence of such a condition.

\begin{theorem} \label{thm:ext1}
Let $\ca$ be an exact category and $\ct$ a triangulated
category such that there exists an isomorphism $\ct \iso \T(e)$
for some triangulated derivator $\T$ of type $\Dia_\f$.
\begin{itemize}
\item[a)] Suppose that $F : \ca \to \ct$ is an exact functor such
that the (Toda) condition
\[
\Hom_{\ct}(\Sigma^n F(X), F(Y)) = 0 \ko \quad n>0 ,
\]
holds for all $X, Y$ in $\ca$. Then, there exists an exact morphism
of prederivators $\tilde{F} : \ul\ca \to \T$ (of type $\Dia_\f$)
having base $F$ (up to iso).
The morphism $\tilde{F}$ is unique up to a unique natural isomorphism.
\item[b)] Suppose that $F$ and $F'$ are exact morphisms $\ul\ca \to \T$
and that the (Toda) conditions
\[
\Hom_{\T(e)}(\Sigma^n F_e(X), F_e(Y)) = 0 \ko \quad n>0 \ko
\]
\[
\Hom_{\T(e)}(\Sigma^n F'_e(X), F'_e(Y)) = 0 \ko \quad n>0 \ko
\]
\[
\Hom_{\T(e)}(\Sigma^n F_e(X), F'_e(Y)) = 0 \ko \quad n>0 \ko
\]
hold for all $X, Y$ in $\ul\ca(e) = \ca$. Then, the map
\[
\Hom_{\ul\HOM_{ex}(\ul\ca,\T)}(F,F') \xymatrix{\ar[r] &}
\Hom_{\ul\Hom_{ex}(\ca,\T(e))}(F_e,F'_e) \ko \quad \mu \mapsto \mu_e
\]
is bijective.
\end{itemize}
\end{theorem}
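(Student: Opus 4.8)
The plan is to build $\tilde F$ one diagram at a time, using the epivalence of the diagram functors supplied by Propositions \ref{prop:fullfaith} and \ref{prop:ess-surj}, and then to certify exactness via Proposition \ref{prop:redundancy}. Identify $\ct$ with $\T(e)$ along the given isomorphism, and call an object of $\T(I)$ (or a presheaf in $\ul\Hom(I^\circ,\T(e))$) \emph{admissible} if all its fibres (resp. values) lie in the essential image of $F_e$; the Toda hypothesis on $F$ then supplies, for every pair of admissible objects, all of the vanishing conditions demanded by Propositions \ref{prop:fullfaith} and \ref{prop:ess-surj}. Consequently $\dia_I$ is full and faithful on admissible objects, every admissible presheaf of the form $F_e\circ G$ with $G\in\ul\ca(I)$ lifts (Proposition \ref{prop:ess-surj}a), and any morphism between such presheaves lifts uniquely. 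First I would fix for each finite $I$ and each $G$ a lift $\tilde F_I(G)\in\T(I)$ of $F_e\circ G$, and define $\tilde F_I$ on a morphism $g$ to be the unique lift of $F_e\circ g$ (Proposition \ref{prop:ess-surj}b together with Proposition \ref{prop:fullfaith}). Uniqueness of lifts makes $\tilde F_I$ a functor; it is additive, since a failure of additivity would be an endomorphism of an admissible object that is the identity on all fibres, hence the identity by {\bf Der 2}.

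Next, for $u\colon I\to J$ in $\Dia_\f$ the objects $\tilde F_I(u^*G)$ and $u^*\tilde F_J(G)$ are admissible with the same diagram $F_e\circ G\circ u^\circ$ up to canonical isomorphism (using that $\dia_I\circ u^*$ agrees, by $2$-functoriality, with presheaf restriction composed with $\dia_J$), so Proposition \ref{prop:fullfaith} yields a unique isomorphism $\varphi_u\colon \tilde F_I u^*\iso u^*\tilde F_J$; uniqueness forces naturality in $G$ and all the coherence identities of Definition \ref{def:dermor}, each verifiable after applying the faithful $\dia_I$, where it holds strictly. Since $\dia_e$ is the identity, $\tilde F_e\cong F$. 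To see that the additive morphism $\tilde F$ is exact, by Proposition \ref{prop:redundancy}b it suffices to show that $\tilde F_{\Delta_1}$ and $\tilde F_\square$ are weak $\partial$-functors. Given a conflation $\eps\colon X\to Y\to Z$ in $\ul\ca(\boxempty)$ (the $\Delta_1$-case being analogous), complete $\tilde F_\square(X)\to\tilde F_\square(Y)$ to a distinguished triangle $\tilde F_\square X\to\tilde F_\square Y\to C\to\Sigma\tilde F_\square X$ in $\T(\boxempty)$. Applying the triangulated fibre functors, $C_v$ is a cone of $F_e(X_v)\to F_e(Y_v)$ at each vertex $v$; since $F$ is a $\partial$-functor, $F_e(Z_v)$ is another, and the vanishing $\Hom_{\T(e)}(\Sigma F_e(X_v),F_e(Z_v))=0$ makes the comparison isomorphism $F_e(Z_v)\iso C_v$ unique, hence (by the same vanishing) compatible with the transition maps. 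Thus these vertexwise isomorphisms assemble to an isomorphism of presheaves $\dia_\square C\iso F_e\circ Z=\dia_\square(\tilde F_\square Z)$ compatible with the arrows out of $\dia_\square(\tilde F_\square Y)$; $C$ being admissible, Proposition \ref{prop:fullfaith} lifts it to an isomorphism $C\iso\tilde F_\square Z$, and faithfulness of $\dia_\square$ identifies the induced composite with $\tilde F_\square(p)$. So $\tilde F_\square$ is weakly exact and $\tilde F$ is exact.

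For part b) I would construct the inverse of $\mu\mapsto\mu_e$ directly. Given a natural transformation $\bar\mu\colon F_e\to F'_e$ (automatically exact as a $2$-morphism, by Remark \ref{rmk:all}), the objects $F_IG$ and $F'_IG$ are admissible for $F_e$ and $F'_e$, and the three Toda hypotheses provide exactly the vanishings needed to apply Proposition \ref{prop:fullfaith} to the pairs at hand; whiskering $\bar\mu$ with $G$ gives a morphism of presheaves $\dia_I(F_IG)\cong F_e\circ G\to F'_e\circ G\cong\dia_I(F'_IG)$, which lifts to a unique $\mu_I^G\colon F_IG\to F'_IG$ in $\T(I)$. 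Uniqueness makes $(\mu_I^G)_G$ natural in $G$ and makes $(\mu_I)_I$ a modification — both checked after applying the faithful $\dia_I$ — so $\mu$ is a $2$-morphism in $\ul\HOM_{ex}(\ul\ca,\T)$ with $\mu_e=\bar\mu$, giving surjectivity. Injectivity is faithfulness again: if $\mu_e=\nu_e$ then $\dia_I(\mu_I^G)$ and $\dia_I(\nu_I^G)$ coincide fibrewise, so $\mu_I^G=\nu_I^G$ by Proposition \ref{prop:fullfaith}. Finally the uniqueness clause in a) follows from b): if $\tilde F,\tilde F'$ both extend $F$, the chosen isomorphism $\tilde F_e\cong F\cong\tilde F'_e$ lifts by b) to a unique $2$-morphism $\mu\colon\tilde F\to\tilde F'$, which is invertible since each $\mu_I^G$ is an isomorphism on fibres, hence an isomorphism by {\bf Der 2}.

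The step I expect to be the genuine obstacle is the exactness verification — precisely, upgrading the vertexwise cone data to an isomorphism of presheaves $\dia_\square C\cong\dia_\square(\tilde F_\square Z)$ compatible with the transition maps. This is where the Toda vanishing is indispensable, as it is exactly what rigidifies the otherwise non-canonical completion of morphisms of distinguished triangles; once Propositions \ref{prop:fullfaith}, \ref{prop:ess-surj} and \ref{prop:redundancy} are in hand, everything else is bookkeeping, since every coherence, naturality or modification identity in sight becomes automatic after applying a diagram functor on which $\dia$ is faithful.
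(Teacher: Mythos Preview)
Your proposal is correct and follows essentially the same strategy as the paper: build $\tilde F_I$ by lifting along $\dia_I$ using Propositions~\ref{prop:fullfaith} and~\ref{prop:ess-surj}, obtain the coherence isomorphisms $\varphi_u$ from uniqueness of lifts, and certify exactness via Proposition~\ref{prop:redundancy}b) after checking weak exactness of $\tilde F_{\Delta_1}$ and $\tilde F_\square$.

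There are two minor differences in execution worth noting. First, in the weak-exactness step the paper produces the comparison map $\varphi\colon W\to Z$ \emph{in $\T(I)$} directly, using that $\tilde F_I(p)\circ\tilde F_I(i)=\tilde F_I(0)=0$ to factor $\tilde F_I(p)$ through the cone, and only then checks it is a fibrewise isomorphism (invoking \textbf{Der~2}); you instead assemble the fibrewise isomorphisms into a presheaf map and lift via Proposition~\ref{prop:fullfaith}. Both work, but the paper's route sidesteps the transition-map compatibility you flagged as the obstacle. Second, for part~b) the paper argues more abstractly: it factors the evaluation through $\ul\HOM_{add}(\ul\ca,\ul{\T(e)})$ and invokes a lemma from \cite{Keller91} to reduce to a local computation controlled by the exact sequence in the proof of Proposition~\ref{prop:fullfaith}. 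Your direct construction of the inverse is cleaner and avoids the external reference, at the cost of spelling out the modification identity by hand.
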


\begin{proof}
a) For any diagram $I$ in $\Dia_\f$, let us denote by
\[
\ul{F}_I : \ul\Hom(I^\circ,\ul\ca(e)) \xymatrix{\ar[r] &}
\ul\Hom(I^\circ,\T(e))
\]
the induced functor defined on presheaves.
Let $\V(I)$ be the essential image of the functor $\ul{F}_I$, which
is an additive subcategory of $\ul\Hom(I^\circ,\ul\ca(e))$. Because
of the hypotheses of claim a), every object lying in $\V(I)$ clearly
satisfies the hypotheses of claim a) of Proposition \ref{prop:ess-surj}.
Therefore, according to that claim they lift to $\T(I)$. Thus, we
can form an additive category $\U(I)$ by taking all the preimages
under $\dia_{I}$ of all the objects lying in $\V(I)$ and then
considering the additive full subcategory they span in $\T(I)$.

By construction, the restriction of the diagram functor
$\dia_{I}$ to $\U(I)$ is essentially surjective. Actually, it is
fully faithful, too. Indeed, for every pair of objects $X$ and $Z$
in $\U(I)$, the hypotheses of Proposition \ref{prop:fullfaith}
hold. Hence, the conclusion gives us a bijection
\[
\xymatrix{
\Hom_{\T(I)}(X, Z) \ar[r]^<>(0.5)\sim &
\Hom_{\ul\Hom(I^\circ,\T(e))}(\dia_I X, \dia_I Z) \ko
}
\]
induced by $\dia_I$, which clearly reduces to a bijection
\[
\xymatrix{
\Hom_{\U(I)}(X, Z) \ar[r]^<>(0.5)\sim &
\Hom_{\V(I)}(\dia_I X, \dia_I Z) \ko
}
\]
induced by its restriction to the subcategory $\U(I)$, for all
pairs of objects $X$ and $Z$ in $\U(I)$.

Said otherwise, the restriction of the diagram functor to
$\U(I)$ is an (additive) equivalence of (additive) categories.
Nevertheless, we will show that its (inverse) composition with
$\ul{F}_I \circ \dia^\A_I$ gives rise to a $\partial$-functor
from $\A(I)$ to $\T(I)$.

Consider the following commutative diagram of additive categories
\[
\xymatrix{
\ul\Hom(I^\circ,\ul\ca(e)) \ar[r] & \V(I) \ar[r] &
\ul\Hom(I^\circ,\T(e)) \\
\ul\ca(I) \ar[u]^\eqiso \ar@{-->}[r]_{} & \U(I)
\ar[u]_\simeq \ar[r] & \; \T(I) \ar[u]_{\dia_{I}}
}
\]
that we have constructed for any diagram $I$ in $\Dia_\f$. The dashed
arrow is obtained by composition with the functor which is inverse to
the vertical equivalence in the centre of the diagram.
It clearly induces an additive functor
$\tilde{F}_I : \ul\ca(I)| \to \T(I)|$
for every diagram $I$ lying in $\Dia_\f$. Here and in the sequel,
the symbol $|$ means the image under the forgetful functor.

This construction gives rise to a diagram of {\em additive} prederivators,
\[
\xymatrix{
\ul\ca| \ar[r] & \V & \U \ar[l]_\sim \ar[r] & \T| .
}
\]
After inverting the equivalence and composing, we get an additive
morphism of additive prederivators $\tilde{F} : \ul\ca| \to \T|$.
It is indeed easy to check that, for all the morphisms $u : I \to J$
in $\Dia_\f$, we have an invertible natural transformation of functors
$u^*\tilde{F}_J \iso \tilde{F}_Iu^*$.
Moreover, it is clear that the base of $\tilde{F}$ is naturally
isomorphic to the functor $F$ as an {\em additive} functor.

We want to show that, with respect to the exact and triangulated
structures of the categories $\A(I)$ and $\T(I)$, the functor
$\tilde{F}_I$ is weakly exact according to definition
\ref{def:weakly}. Suppose we are given a pair of morphisms in $\U(I)$
\[
\xymatrix{
X \ar[r]^f & Y \ar[r]^g & Z
}
\]
which is the image under $\tilde{F}_I$ of some conflation $\eps$
lying in $\ul\ca(I)$.
We can extend the morphism $f$ to a distinguished triangle
\[
\xymatrix{
X \ar[r]^f & Y \ar[r]^h & W \ar[r]^l & \Sigma X
}
\]
in the triangulated category $\T(I)$.
Since the composition of $f$ with $g$ vanishes there is an arrow
$\varphi : W \to Z$ which lifts $g$. After applying to this
distinguished triangle the (triangulated) functor $i^*$, we get a
distinguished triangle in $\T(e)$
\[
\xymatrix{
i^*X \ar[r]^{i^*f} & i^*Y \ar[r]^{i^*h} & i^*W
\ar[r]^<>(0.5){\delta_i l} & \Sigma i^*X \ko
}
\]
for any $i \in I$.
As the morphisms $f$ and $g$ are in the image of a conflation of
$\A(I)$ and $F : \ca \to \ct$ is supposed to be a $\partial$-functor,
we get that this triangle fits in a morphism of distinguished triangles
\[
\xymatrix{
i^*X \ar[r]^{i^*f} \ar@{=}[d] & i^*Y \ar[r]^{i^*h} \ar@{=}[d] &
i^*W \ar[r]^<>(0.5){\delta_i l} \ar[d]^<>(0.5){\psi_i}_\wr &
\Sigma i^*X \ar@{=}[d] \\
i^*X \ar[r]^{i^*f} & i^*Y \ar[r]^{i^*g} & i^*Z
\ar[r]^<>(0.5){m_i} & \Sigma i^*X .
}
\]
Here, the invertible arrow $\psi_i : i^*W \to i^*Z$ which makes the
diagram commute exists thanks to the axioms of triangulated categories.
Since the objects $X$ and $Z$ belong to the subcategory $\U(I)$, their
images under the (restriction of the) functor $\dia_I$ are in $\V(I)$.

Hence, we know from the Toda condition that the abelian group
$\Hom_{\T(e)}(\Sigma i^*X,i^*Z)$ must be zero for each $i \in I$.
This condition makes clear that the isomorphism $\psi_i$ is
uniquely determined. It follows that $\psi_i$ must be canonically
isomorphic to $i^*\varphi$, for all $i \in I$. Now we can use
Axiom {\bf Der 2} of derivators which ensures that $\varphi$
actually is an isomorphism. This shows that the pair of morphisms
$(f \vir g)$ actually extends to a distinguished triangle of $\T(I)$
\[
\xymatrix{
X \ar[r]^f & Y \ar[r]^g & Z \ar[r]^<>(0.5)m & \Sigma X \ko
}
\]
for {\em some} morphism $m$.

Now, since we have checked that the hypotheses of Proposition
\ref{prop:redundancy} b) are fulfilled, we know that the morphism
$\tilde{F} : \ul\ca \to \T$ has the {\em property} of being an
exact morphism of prederivators. We have already checked at the
beginning of the proof of \ref{prop:redundancy} b) that the base
of our morphism $\tilde F$ is canonically an exact functor.
Therefore, it must coincide (up to a canonical iso) with $F$ as
an {\em exact} functor.
The uniqueness of our construction also follows by the Toda
condition.

b) We want to show that the functor
\[
\ul\HOM_{ex}(\ul\ca,\T) \xymatrix{\ar[rr]^{\ev_e} &&}
\ul\Hom_{ex}(\ul\ca(e),\T(e)), \qquad F \mapsto F_e
\]
induces a bijection
\[
\xymatrix{
\Hom_{\ul\HOM_{ex}(\ul\ca,\T)}(F, F') \ar[r]^<>(0.5)\sim &
\Hom_{\ul\Hom_{ex}(\ul\ca(e),\T(e))}(F_e, F'_e) .
}
\]

It is known (cf. \cite[8.1]{Keller90}) the easy fact that,
under the hypotheses of b) (Toda conditions), the set of
natural transformations of $\partial$-functors from $F_e$
to $F'_e$ is in bijection with the set of natural transformations
of their underlying additive functors.
Moreover, after item b) of Prop. \ref{prop:redundancy} we
know that the set of $2$-morphisms of exact morphisms from
$F$ to $F'$ is in bijection with the subset of $2$-morphisms
of their underlying additive $2$-morphisms from $F|$ to $F'|$.

Thus, to prove the claim in item b) of this theorem, we only have
to show that the functor
\[
\ul\HOM_{add}(\ul\ca|,\T|) \xymatrix{\ar[rr]^{{\ev|}_e} &&}
\ul\Hom_{add}(\ul\ca(e)|,\T(e)|), \qquad F| \mapsto F|_e
\]
induces a bijection
\[
\xymatrix{
\Hom_{\ul\HOM_{add}(\ul\ca|,\T|)}(F|, F'|) \ar[r]^<>(0.5)\sim &
\Hom_{\ul\Hom_{add}(\ul\ca(e)|,\T(e)|)}(F|_e, F'|_e) .
}
\]

{\em From now on we omit the symbol $|$ .} Let us factor the functor
$\ev_e$ as follows
\[
\xymatrix{
\ul\HOM_{add}(\ul\ca,\T) \ar[rr]^<>(0.5){\ev_e}
\ar[dr]_{\ul{\dia}_{\ul\ca}} && \ul\Hom_{add}(\ul\ca(e),\T(e)) \\
& \ul\HOM_{add}(\ul{\A},\ul{\T(e)}) \ar[ur]_\sim & .
}
\]
Here, $\ul{\T(e)}$ is the additive prederivator which associates the
additive category $\ul\Hom(I^\circ,\T(e))$ with a diagram $I \in \Dia_\f$.
The additive category $\ul\HOM_{add}(\ul\ca,\ul{\T(e)})$ contains as
objects the morphisms of the underlying additive prederivators, \ie,
morphisms $F : \ul\ca \to \ul{\T(e)}$ such that $F_I$ are additive functors
for all diagrams $I \in \Dia_\f$ with compatibility conditions. The
additive functor $\ul{\dia}_{\ul\ca}$ is induced by the morphism of the
underlying additive prederivators $\dia : \T \to \ul{\T(e)}$ by composition.
Moreover, the additive functor of additive categories on the right of the
last diagram above is given by evaluation on the terminal diagram $e$.
It is not hard to directly check that it is an equivalence of categories.

Thus, in order to prove the claim, it is enough to show that the
morphism $\dia$ induces a bijection
\[
\xymatrix{
\Hom_{\ul\HOM_{add}(\ul\ca,\T)}(F, F') \ar[r]^<>(0.5)\sim &
\Hom_{\ul\HOM_{add}(\ul\ca,\ul{\T(e)})}(\dia \circ F, \dia \circ F') .
}
\]
By the contravariant version of Lemma A.5 in \cite{Keller91}, we can
check this isomorphism locally, \ie, we have to show that the functor
$\dia_I : \T(I) \to \ul{\T(e)}(I)$ induces a bijection
\[
\xymatrix{
\Hom_{\ul\Hom_{add}(\ul\ca(J),\T(I))}(F_I \circ u^*, F'_I \circ u^*)
\ar[d]^<>(0.5)\sim \\
\Hom_{\ul\Hom_{add}(\ul\ca(J),\ul{\T(e)}(I))}((\dia \circ F)_I \circ u^*,
(\dia \circ F')_I \circ u^*) \ko
}
\]
for each morphism $u : I \to J$ in $\Dia_\f$. This is
{\em a posteriori} true if the map
\[
\xymatrix{
\Hom_{\T(I)}(\Sigma^n F_I X, F'_I Y) \ar[r] &
\Hom_{\ul{\T(e)}(I)}(\Sigma^n \dia_I(F_I X), \dia_I(F'_I Y))
}
\]
is a bijection for all $X$, $Y$ in $\ul\ca(I)$. But this is true,
since, by the argument of the proof of Proposition
\ref{prop:fullfaith}, we know that, for all $n \in \N$, there is
an exact sequence
\[
\xymatrix{
\prod_{j \in I} \prod_{i \in I - \{j\}} \prod_{I^\circ(i,j)}
\Hom_{\T(e)}(\Sigma^{n+1} (\dia_{I}(F_I X))_i,
(\dia_{I}(F'_I Y))_j) \ar[d] \\
\Hom_{\T(I)}(\Sigma^n F_I X, F'_I Y)
\ar[d]^{} \\
\Hom_{\ul{\T(e)}(I)}(\Sigma^n \dia_{I}(F_I X), \dia_{I}(F'_I Y)) \ar[d] \\
\;\;\;\, 0 \ko
}
\]
where the first group vanishes by the third Toda condition in the
hypotheses. The assertion follows.
\end{proof}

This theorem, combined with Theorem \ref{thm:main} immediately yields
Theorem \ref{thm:ext} as a corollary.


\def\cprime{$'$}
\providecommand{\bysame}{\leavevmode\hbox to3em{\hrulefill}\thinspace}
\providecommand{\MR}{\relax\ifhmode\unskip\space\fi MR }
\providecommand{\MRhref}[2]{%
  \href{http://www.ams.org/mathscinet-getitem?mr=#1}{#2}
}
\providecommand{\href}[2]{#2}

\end{document}